\def\@seccntformat#1{\@ifundefined{#1@cntformat}%
    {\csname the#1\endcsname\quad}
    {\csname #1@cntformat\endcsname}}
\newcommand{\section@cntformat}{\S\thesection\quad}
\newcommand{\subsection@cntformat}{\S\S\thesubsection\quad}
\patchcmd{\section}{\scshape}{\bf}{}{}
\newtheorem{Thm}{Theorem}
\newtheorem{Lem}{Lemma}
\newtheorem{Prop}{Proposition}
\theoremstyle{remark}
\theoremstyle{remark}
\theoremstyle{remark}
\numberwithin{equation}{section}
\def\sumprime_#1{\setbox0=\hbox{$\scriptstyle{#1}$}
\setbox2=\hbox{$\displaystyle{\sum}$}
\setbox4=\hbox{${}'\mathsurround=0pt$}
\dimen0=.5\wd0 \advance\dimen0 by-.5\wd2
\ifdim\dimen0>0pt
\ifdim\dimen0>\wd4 \kern\wd4 \else\kern\dimen0\fi\fi
\mathop{{\sum}'}_{\kern-\wd4 #1}}
\begin{document}

\title{The pseudoinverse of the Laplacian matrix: Asymptotic behavior of its trace}

\author{Fatih Ecevit \hspace{.3cm} Cem Yal\c{c}{\i}n Y{\i}ld{\i}r{\i}m} 
\address{Department of Mathematics, Bo\~{g}az{\tiny \.{I}}\c{c}{\tiny \.{I}}  University, \.{I}stanbul 34342, Turkey}
\email{fatih.ecevit@boun.edu.tr, yalciny@boun.edu.tr} \subjclass[2010]{Primary 33B10, 41A60}
\date{\today}

\keywords{lattice sum, elementary classical functions, asymptotic approximation}

\begin{abstract}
In this paper we are concerned with the asymptotic behavior of
\[
	\operatorname{tr}(\mathcal{L}^+_{\rm sq})
	= \frac{1}{4} \sum_{j,k=0 \atop (j,k) \neq (0,0)}^{n-1} \frac{1}{1-\frac{1}{2} \big( \cos \frac{2\pi j}{n} + \cos \frac{2\pi k}{n} \big)},
\]
the trace of the pseudoinverse of the Laplacian matrix related with the square lattice, as $n \to \infty$.
The method we developed for such sums in former papers depends on the use of Taylor approximations for the summands. It was shown
that the error term depends on whether the Taylor polynomial used is of degree two or higher. Here we carry this out for the square lattice
with a fourth degree Taylor polynomial and thereby obtain a result with an improved error term which is perhaps the most precise
one can hope for.
\end{abstract}

\begingroup
\def\uppercasenonmath#1{} 
\let\MakeUppercase\relax 
\maketitle \markright{The pseudoinverse of the Laplacian matrix: Asymptotic behavior of its trace}
\endgroup

\let\MakeUppercase\relax

\section{Introduction}

This paper is a continuation of our articles \cite{BoysalEcevitYildirim18} and
\cite{BoysalEcevitYildirim20} where certain sums involving the cosine function were asymptotically
evaluated over the triangular, the square and the modified union jack lattices. 
Here we present the most precise calculation possible
within the framework set up in \cite{BoysalEcevitYildirim20} in the case of the square lattice.
The sums studied in \cite{BoysalEcevitYildirim18} and
\cite{BoysalEcevitYildirim20} are realized as the trace $\operatorname{tr}({\mathcal{L^+}})$
of the pseudoinverse $\mathcal{L}^+$ of the Laplacian matrix $\mathcal{L}$ which is a
fundamental object in spectral theory of graphs, networks,
grids, and arithmetic of curves (see e.g. \cite{Hu,Cj,Poz,Gu,C1,C2,BGPWZ}). 
In certain cases (see \cite{C1,C2}), $\operatorname{tr}({\mathcal{L^+}})$ arises as the only nontrivial
term in the calculation of intrinsic graph invariants such as the Kirchhoff index and the tau constant.

%

For a variety of lattices endowed with periodic boundary conditions, $\operatorname{tr}({\mathcal{L^+}})$ can be expressed in terms of a sum of the form \cite{BoysalEcevitYildirim20}
\begin{equation} \label{eq:Fn-concrete}
	F_n= \!\!\! \sum_{j,k=0 \atop (j,k)\neq (0,0)}^{n-1} \frac{1}{1- \dfrac{1}{L}\sum\limits_{\ell = 1}^{L} \cos (\mathbf{s}_{\ell} \cdot \mathbf{t}_{j,k})}
\end{equation}
where $L \ge 2$ is an integer, $\mathbf{s}_{\ell} = (s_{1,\ell},s_{2,\ell}) \in \mathbb{Z}^2 \backslash \{ \mathbf{0} \}$ for
$\ell = 1,\ldots,L$ with $\mathbf{s}_{1} = (1,0)$ and $\mathbf{s}_{2} = (0,1)$, and $\boldsymbol{t}_{j,k} = (t_j,t_k) = (\frac{2\pi j}{n}, \frac{2\pi k}{n})$
for $j,k \in \mathbb{Z}$. For instance, for the square, triangular, and modified union
jack lattices, it is known that (see \cite{Poz})
\[
	\operatorname{tr}(\mathcal{L}^+_{\rm sq})
	= \frac{1}{4} F_n^{\rm sq} 
	= \frac{1}{4} \sum_{j,k=0 \atop (j,k)\neq (0,0)}^{n-1} \frac{1}{1-\frac{1}{2} (\cos \frac{2\pi j}{n} + \cos \frac{2\pi k}{n})},
\]
\[
	\operatorname{tr}(\mathcal{L}^+_{\rm tr})
	= \frac{1}{6} F_n^{\rm tr} 
	= \frac{1}{6} \sum_{\substack{j,k=0\\(j,k)\neq (0,0)}}^{n-1} 
	\frac{1}{1- \frac{1}{3}(\cos{\frac{2\pi j}{n}}+ \cos{\frac{2\pi k}{n}} + \cos{\frac{2\pi (j+k)}{n}})},
\]
\[
	\operatorname{tr}(\mathcal{L}^+_{\rm muj})
	= \frac{1}{8} F_n^{\rm muj} 
	= \frac{1}{8} \!\!\!\!\! \sum_{j,k=0 \atop (j,k)\neq (0,0)}^{n-1} \!\!\!\!
	\frac{1}{1-\frac{1}{4}(\cos \frac{2\pi j}{n}+\cos \frac{2\pi k}{n}+\cos\frac{2\pi (j-k)}{n}+\cos\frac{2\pi (j+k)}{n})}.
\]
Former studies on $F_n$ were either outright wrong (\cite{Ye} gives purported approximate values for divergent integrals) or 
quite rough (the estimates of \cite{C1} do not even capture the asymptotic value of $F_n$) - whence arose the need to do correct and precise calculations.  
The asymptotic behavior of the sum $F_n^{\rm tr}$ associated with the triangular lattice was studied in \cite{BoysalEcevitYildirim18}.
There it was shown that
\begin{align}
	F_n^{\rm tr} \label{eq:A2}
	& = \frac{\sqrt{3}}{\pi}n^2\log n
	+ \frac{\sqrt{3}}{\pi} \Big(\gamma + \log \big( \frac{4 \pi \sqrt[4]{3}}{\Gamma(\frac{1}{3})^3}\big) \Big) n^2
	+ \mathcal{O}(\log n),
	\quad
	\text{as }
	n \to \infty,
\end{align}
where $\gamma$ is Euler's constant. The approach in \cite{BoysalEcevitYildirim18} was generalized in \cite{BoysalEcevitYildirim20}
to develop methods for obtaining the asymptotic behavior of the general sum $F_n$ of \eqref{eq:Fn-concrete} within errors of $\mathcal{O}(\log n)$
and $\mathcal{O}(1)$ as $n \to \infty$. They are based on the asymptotic analyses of integrals of $f$ and sums and integrals of $f_m$ where
\begin{equation} \label{eq:fandpsi}
	f(\mathbf{x}) = \frac{1}{\psi(\mathbf{x})}
	\quad
	\text{with}
	\quad
	\psi(\mathbf{x})
	= 1 - \frac{1}{L} \sum_{\ell = 1}^{L} \cos (\mathbf{s}_{\ell} \cdot \mathbf{x}),
	\qquad
	\mathbf{x} = (x,y) \in \mathbb{R}^2,
\end{equation}
and
\begin{equation} \label{eq:fmandpm}
	f_m(\mathbf{x}) = \frac{1}{p_m(\mathbf{x})}
	\quad
	\text{with}
	\quad
	p_{m}(\mathbf{x})
	= \frac{1}{L} \sum_{\ell = 1}^{L}
	\sum_{j=1}^{m} \frac{(-1)^{j+1}(\mathbf{s}_{\ell} \cdot \mathbf{x})^{2j}}{(2j)!}
	\quad
	(m \ge 1);
\end{equation}
$p_{m}$ is the $2m$-th order Taylor polynomial approximation of $\psi$ around the origin.
As shown in \cite{BoysalEcevitYildirim20}, the use of $f_m$ allows for the determination of the asymptotic expansion with an error term of $\mathcal{O}(\log n)$ for $m = 1$,
and $\mathcal{O}(1)$ for any larger value of $m$. The ideal choice is therefore $f_2$. However, working with $f_2$ demands significantly more delicate analyses compared
to $f_1$. In fact, the examples provided in \cite{BoysalEcevitYildirim20} are based on the use of $f_1$ for proving
\begin{equation} \label{eq:Fnfsqold}
	F_n^{\rm sq} = \frac{2}{\pi}n^2\log n+\frac{2}{\pi}\Big(\gamma+\log \big( \frac{4 \sqrt{2\pi}}{\Gamma(\frac{1}{4})^2} \big) \Big) n^2+\mathcal{O}(\log n),
\end{equation}
and
\begin{align} \label{eq:B2}
	F_n^{\rm muj}
	& = \frac{4}{3\pi}n^2\log n
	+ \frac{4}{3\pi} \Big(\gamma + \log \big( \frac{4 \sqrt{6\pi}}{\Gamma(\frac{1}{4})^2}\big) \Big) n^2
	+ \mathcal{O}(\log n),
\end{align}
as $n \to \infty$. In this paper we use $f_2$ for the first time and prove the following:

\begin{Thm} \label{thm:main}
As $n \to \infty$, we have
\begin{equation} \label{eq:Fnfsq}
	F_n^{\rm sq} = \frac{2}{\pi}n^2\log n+\frac{2}{\pi}\Big(\gamma+\log \big( \frac{4 \sqrt{2\pi}}{\Gamma(\frac{1}{4})^2} \big) \Big) n^2+\mathcal{O}(1).
\end{equation}
\end{Thm}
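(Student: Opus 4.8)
The plan is to run the machinery of \cite{BoysalEcevitYildirim20} with the fourth-degree Taylor polynomial, i.e.\ with $m=2$, for the square lattice, where $L=2$, $\mathbf{s}_1=(1,0)$, $\mathbf{s}_2=(0,1)$, so that by \eqref{eq:fandpsi}--\eqref{eq:fmandpm}
\[
	\psi(\mathbf{x})=1-\tfrac12(\cos x+\cos y),
	\qquad
	p_2(\mathbf{x})=\frac{x^2+y^2}{4}-\frac{x^4+y^4}{48},
	\qquad
	f_2=\frac1{p_2}.
\]
First I would dispose of the preliminaries that make $f_2$ a legitimate comparison function. Writing $p_2(\mathbf{x})=\tfrac1{48}\bigl(72-(x^2-6)^2-(y^2-6)^2\bigr)$ shows $p_2>0$ on $[-\pi,\pi]^2\setminus\{\mathbf 0\}$, since $x^2,y^2\le\pi^2<12$ forces $(x^2-6)^2,(y^2-6)^2<36$. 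And from $1-\cos x=\tfrac{x^2}2-\tfrac{x^4}{24}+\tfrac{x^6}{720}-\cdots$ one gets $\psi-p_2=\tfrac1{1440}(x^6+y^6)+\mathcal{O}(|\mathbf{x}|^8)$, hence, near the origin,
\[
	f-f_2=\frac{p_2-\psi}{\psi\,p_2}=-\frac{x^6+y^6}{90\,|\mathbf{x}|^4}+\mathcal{O}(|\mathbf{x}|^4),
\]
a function which is therefore $\mathcal{O}(|\mathbf{x}|^2)$ near $\mathbf 0$ and whose second-order partial derivatives stay bounded there --- in sharp contrast with $f-f_1$, which is merely bounded at the origin, with Hessian of size $|\mathbf{x}|^{-2}$.

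Next I would set up the decomposition of \cite{BoysalEcevitYildirim20}. Re-index $F_n^{\rm sq}$ as a sum over an $n\times n$ box of integer points centred at the origin (legitimate since $f$ is $2\pi$-periodic in each variable), fix a smooth cutoff $\varphi$ equal to $1$ near $\mathbf 0$ and supported in a small disk inside $(-\pi,\pi)^2$, and split
\begin{align*}
	F_n^{\rm sq}
	&=\underbrace{\sum_{(j,k)\neq\mathbf 0}(1-\varphi)\,f(\mathbf{t}_{j,k})}_{(\mathrm I)}
	+\underbrace{\sum_{(j,k)\neq\mathbf 0}\varphi\,(f-f_2)(\mathbf{t}_{j,k})}_{(\mathrm{II})}\\
	&\quad+\underbrace{\sum_{(j,k)\neq\mathbf 0}\varphi\,f_2(\mathbf{t}_{j,k})}_{(\mathrm{III})}.
\end{align*}
Here $(\mathrm I)$ is a Riemann sum of a function that is $C^\infty$ on the torus $\T^2$, hence equals $\bigl(\tfrac n{2\pi}\bigr)^2\!\int_{\T^2}(1-\varphi)f+\mathcal{O}(n^{-K})$ for every $K$ by Poisson summation. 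Term $(\mathrm{II})$ is where the whole gain over the $f_1$-based derivation of \eqref{eq:Fnfsqold} lies: because $f-f_2$ vanishes quadratically at $\mathbf 0$ with bounded Hessian, the Riemann-sum defect generated near the singularity is $\mathcal{O}(1)$, whereas for $f-f_1$ the Hessian $|\mathbf{x}|^{-2}$ makes it $\mathcal{O}(\log n)$ --- this is precisely the dichotomy (error $\mathcal{O}(\log n)$ for $m=1$, $\mathcal{O}(1)$ for $m\ge2$) established in \cite{BoysalEcevitYildirim20}. Thus $(\mathrm{II})=\bigl(\tfrac n{2\pi}\bigr)^2\!\int_{\T^2}\varphi(f-f_2)+\mathcal{O}(1)$, the integral being convergent by the bound above.

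It remains to analyse the model sum $(\mathrm{III})$. Scaling gives $p_2(\mathbf{t}_{j,k})=\tfrac{\pi^2(j^2+k^2)}{n^2}\bigl(1-\tfrac{\pi^2(j^4+k^4)}{3n^2(j^2+k^2)}\bigr)$, where the bracket is bounded away from $0$ wherever $\varphi(\mathbf{t}_{j,k})\neq0$; expanding the reciprocal,
\[
	f_2(\mathbf{t}_{j,k})=\frac{n^2}{\pi^2(j^2+k^2)}+R(j,k),
\]
with $R$ collecting the corrections coming from the quartic part of $p_2$. The principal term contributes, via $\tfrac1{j^2+k^2}=\int_0^\infty e^{-t(j^2+k^2)}\,dt$ and Poisson summation applied to the resulting theta sums --- equivalently, the first Kronecker limit formula at the Gaussian point $\tau=i$, where $\eta(i)=\Gamma(\tfrac14)/(2\pi^{3/4})$ --- the leading term $\tfrac2\pi n^2\log n$ together with the $\gamma$- and $\Gamma(\tfrac14)$-dependent parts of the $n^2$-coefficient, plus a $\varphi$-dependent piece and an $\mathcal{O}(1)$ error. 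Term by term in the geometric series, $\sum R(j,k)\varphi(\mathbf{t}_{j,k})$ is a sum of a bounded function over $\asymp n^2$ lattice points, so it feeds only the $n^2$-coefficient (and $\mathcal{O}(1)$); it is evaluated by the same smoothed-lattice-sum analysis. Adding $(\mathrm I)$, $(\mathrm{II})$, $(\mathrm{III})$, the $\varphi$-dependent contributions cancel, leaving \eqref{eq:Fnfsq}; a check against \eqref{eq:Fnfsqold} confirms that the $n^2\log n$ and $n^2$ coefficients are unchanged and only the remainder has improved.

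The main obstacle is the honest treatment of $(\mathrm{III})$ with the \emph{non-homogeneous} polynomial $p_2$: every earlier application of the method used the pure power $p_1$, for which the Kronecker-type evaluation is immediate, whereas now one must (i) control each term of the geometric correction series uniformly in $n$, (ii) evaluate the resulting corrected $\mathbb{Z}^2$-lattice sums to the precision needed to pin down the $n^2$-constant, and (iii) carry out the bookkeeping showing that no term of order between $n^2$ and $1$ survives and that the $\varphi$-dependence cancels across $(\mathrm I)$, $(\mathrm{II})$, $(\mathrm{III})$. This is the ``significantly more delicate'' step alluded to in \cite{BoysalEcevitYildirim20}, and it is where essentially all the work lies.
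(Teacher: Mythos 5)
Your preliminary computations are sound (the positivity of $p_2$ on $[-\pi,\pi]^2\setminus\{\mathbf 0\}$, the expansion $f-f_2=-\tfrac{x^6+y^6}{90|\mathbf{x}|^4}+\mathcal{O}(|\mathbf{x}|^4)$, and the resulting $\mathcal{O}(1)$ Riemann-sum defect for the piece involving $f-f_2$ --- this last point is exactly the content of Theorem B of \cite{BoysalEcevitYildirim20} for $m=2$, which you are entitled to quote rather than re-derive). The genuine gap is your term $(\mathrm{III})$: you correctly identify that the asymptotic evaluation of the $f_2$-model lattice sum to within $\mathcal{O}(1)$ is ``where essentially all the work lies,'' and then you do not do it. The sketch you offer is not only incomplete but too optimistic as stated: the correction terms $R(j,k)$ are values of degree-zero homogeneous functions whose derivatives blow up like $|\mathbf{x}|^{-1},|\mathbf{x}|^{-2}$ at the origin, so ``a bounded function summed over $\asymp n^2$ lattice points feeds only the $n^2$-coefficient and $\mathcal{O}(1)$'' is not a proof --- a naive Riemann-sum comparison gives an $\mathcal{O}(\log n)$ defect, and the paper's actual evaluation of $F_n^{\beta}(f_2)$ (via partial fractions, the digamma reflection and asymptotic formulas, and a three-term Euler--Maclaurin expansion, occupying all of \S4) shows that genuine terms of order $n$ do appear in these sums. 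Whether your smooth cutoff $\varphi$ suppresses them is plausible but is precisely the kind of claim that must be verified, not asserted.

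You also miss the observation that makes the whole enterprise tractable, and which is the actual engine of the paper's proof: \emph{none of the constants in the $f_2$-analysis need to be computed}. Since the weaker result \eqref{eq:Fnfsqold} with error $\mathcal{O}(\log n)$ is already known, it suffices to prove that $I_n^{\beta}(f_2)$ and $F_n^{\beta}(f_2)$ each admit an expansion of the \emph{shape} $c_0\,n^2\log n+c_1\,n^2+c_2\,n+\mathcal{O}(1)$; feeding this into \eqref{eq:Olog-O1} shows $F_n^{\rm sq}$ has the same shape, and comparison with \eqref{eq:Fnfsqold} forces $c_0,c_1$ to be the coefficients there and $c_2=0$. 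Your plan instead commits you to evaluating the corrected $\mathbb{Z}^2$-sums exactly (Kronecker limit formula plus all the $R(j,k)$ contributions) and to verifying the cancellation of the $\varphi$-dependent pieces --- a substantially harder route, and the one step of it that is indispensable (the structural expansion of the $f_2$-sum with $\mathcal{O}(1)$ error) is exactly the step left unproved.
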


The motivation behind Theorem~\ref{thm:main} comes from numerical evidence.
Note that the asymptotic expansions \eqref{eq:A2}, \eqref{eq:Fnfsqold}, and \eqref{eq:B2} are all in the form
\[
	F_n = a n^2 \log n + b n^2 + \mathcal{O}(\log n)
\]
for some constants $a$ and $b$. The errors
\begin{equation} \label{eq:En}
	E_n = F_n - (a n^2 \log n + b n^2)
\end{equation}
displayed in Figure \ref{fig:test} suggest that, in fact,
\[
	F_n = a n^2 \log n + b n^2 + \mathcal{O}(1)
\]
with $E_n \approx -0.12$, $E_n \approx -0.25$, $E_n \approx -0.37$ as $n \to \infty$ for the sums associated with the square, triangular, and modified union
jack lattices respectively.
\begin{figure}[t]
	\centering
	\begin{subfigure}
		{\includegraphics[width=1\textwidth,trim={1.8cm 0.1cm 2.6cm 0.8cm},clip]{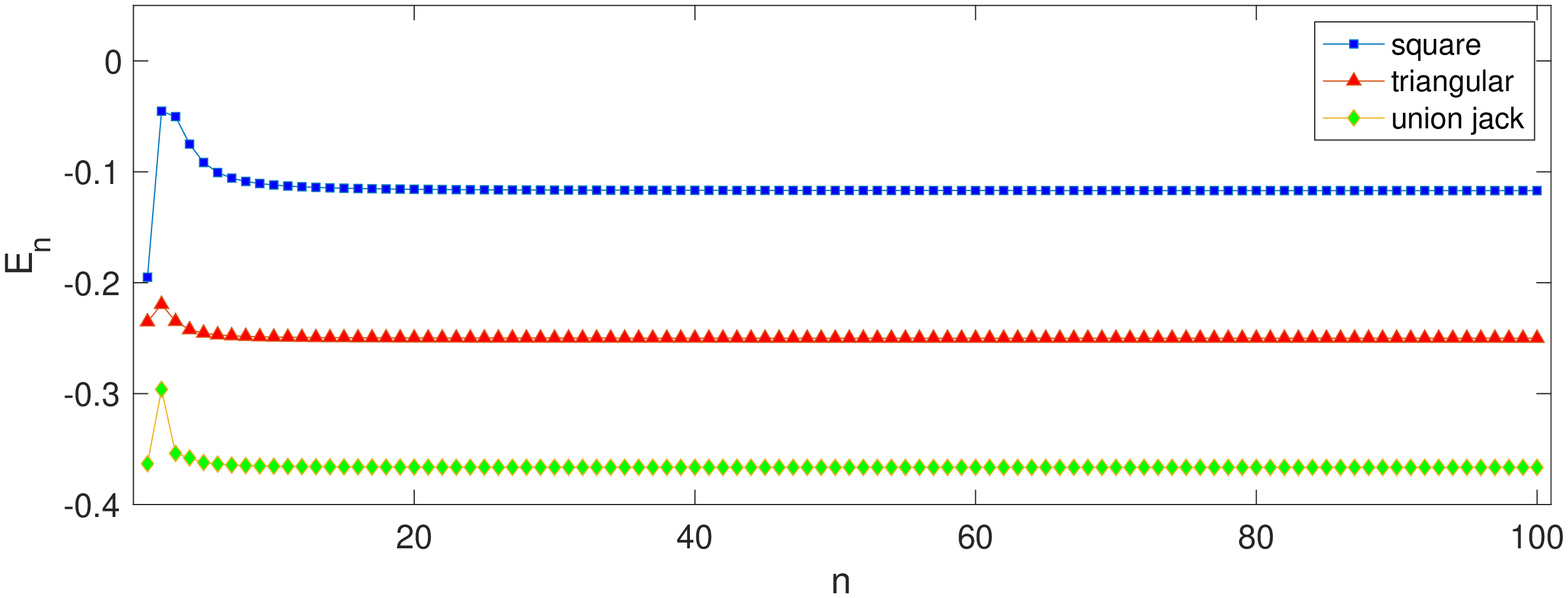}}
	\end{subfigure}
	\begin{subfigure}
		{\includegraphics[width=1\textwidth,trim={1.8cm 0.1cm 2.6cm 0.8cm},clip]{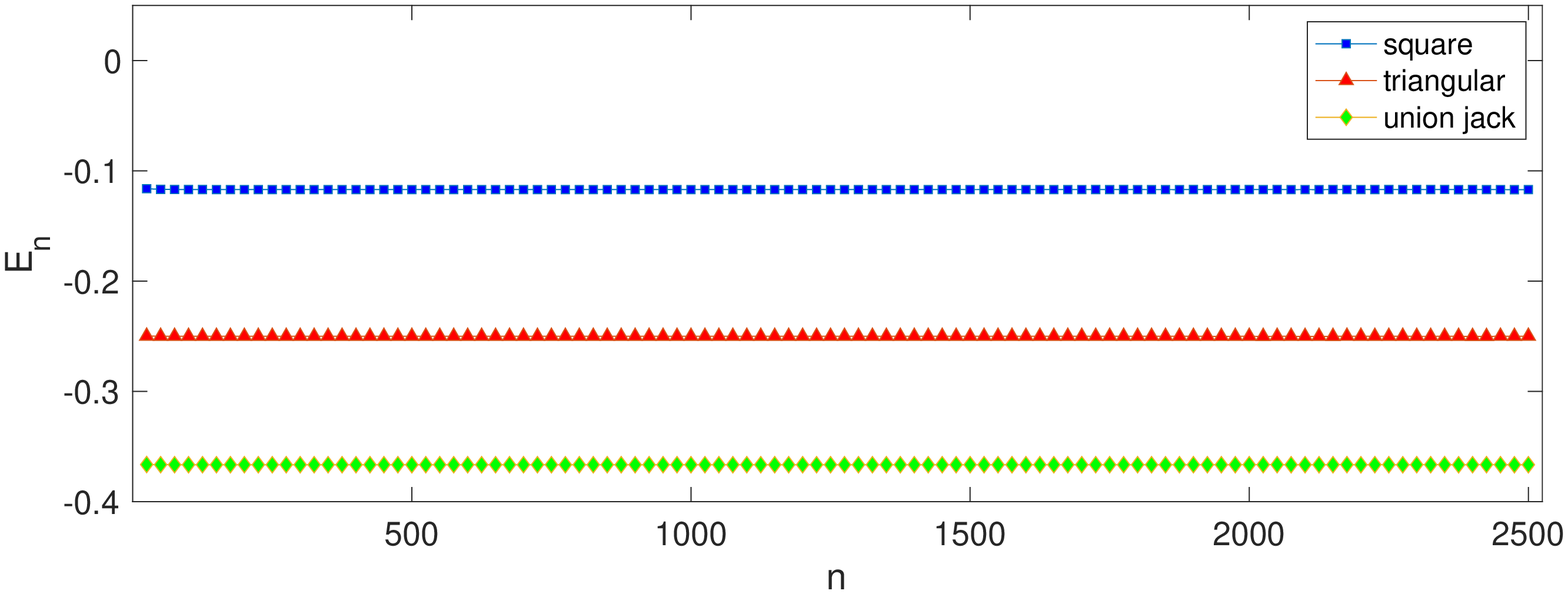}}
	\end{subfigure}
	\caption{$E_n$ in \eqref{eq:En} associated with the square, triangular, and modified union jack lattices for
	$n=1,\ldots,100$ on top and for $25 \le n \le 2500$ with increments of $25$ at the bottom.}
	\label{fig:test}
\end{figure}

As stated above, the estimate given in Theorem~\ref{thm:main} is as precise as it can get within 
the analytical framework developed in \cite{BoysalEcevitYildirim20}. Even if one may aspire to carry out
an exact algebraic calculation involving cyclotomic fields (at least in some special cases such as $n$ running through the sequence of primes, or
the sequence of powers of $2$), the question of
how one would obtain the value of $F_n$, or merely its asymptotic value, from the resulting algebraic numbers remains.


The paper is organized as follows. In \S2 we revisit the method developed in \cite{BoysalEcevitYildirim20} for studying the asymptotic behavior of the general
sum \eqref{eq:Fn-concrete} as $n \to \infty$. In the same section, we also take on the sum $F_n^{\rm sq}$ corresponding to the square lattice, and deferring the technical details to \S3 and \S4, present the proof of Theorem \ref{thm:main}. 
In these latter sections we study the asymptotic behavior of $I_n^{\beta}(f_2)$
and $F_n^{\beta}(f_2)$ (an integral and a sum related with $f_2$) when $F_n = F_n^{\rm sq}$.

\section{The setup}

As shown in \cite{BoysalEcevitYildirim20}, using the $2\pi$-periodicity of $\psi$ in both of its arguments, the sum $F_n$ in \eqref{eq:Fn-concrete} can be recast as
\[
	F_n = F_n (f) = \sum_{\mathbf{t}_{j,k} \in D_n} f(\mathbf{t}_{j,k})
	\quad
	\text{with}
	\quad
	D_n
	= \bigcup_{\mathbf{t}_{j,k} \in [-\pi,\pi)^2 \backslash \{ \mathbf{0} \}} X_{j,k}
\]
where $X_{j,k} = X_j \times X_k$ with $X_j = [x_j,x_{j+1}]$ and $x_j = t_j - \frac{\pi}{n}$ for $j \in \mathbb{Z}$. Explicitly
\[
	D_n
	= 	\left\{
		\begin{array}{ll}
			[-\pi,\pi]^2 \backslash [-\frac{\pi}{n},\frac{\pi}{n}]^2, & n \text{ odd},
			 \vspace{0.1cm} \\
			\left[ -\pi-\frac{\pi}{n},\pi-\frac{\pi}{n} \right]^2 \backslash \left[ -\frac{\pi}{n},\frac{\pi}{n} \right]^2, & n \text{ even}.
		\end{array}
	\right.	
\]
A first approximation
to $F_n(f)$ is the integral
\[
	I_{n}(f) = \dfrac{1}{\Delta_n^2} \iint_{D_n} f(\mathbf{x}) \, d\mathbf{x},
	\qquad \big( \Delta_n = \frac{2\pi}{n} \big),
\]
since $F_n(f)$ can be obtained from $I_n(f)$
by applying the product cubature rule, with both factors coming from the midpoint
rule, on each of the rectangles $X_{j,k}$ in $D_n$.
Other possible approximations are 
\[
	F_n (f_m) = \!\!\!\! \sum_{\mathbf{t}_{j,k} \in D_n} \!\!\! f_m(\mathbf{t}_{j,k}),
	\qquad
	I_{n}(f_m) = \dfrac{1}{\Delta_n^2} \! \iint_{D_n} \!\! f_m(\mathbf{x}) \, d\mathbf{x},
\]
and these are not problematic when $m=1$ since $p_1(\mathbf{x}) = 0$ only for $\mathbf{x} = \mathbf{0}$.
However, when $m>1$, $p_m$ may also vanish at some other points in $[-\pi,\pi] \times [-\pi,\pi]$, and therefore $D_n$
will have to be restricted to a smaller region (see \cite[Remark 1]{BoysalEcevitYildirim20}). Given a fixed $\beta \in (0,1)$,
$p_m$ ($m \ge 1$) is never zero on
\begin{equation} \label{eq:Dnbeta}
	D_{n}^{\beta} = \bigcup_{|t_{j}|, |t_k| \le \frac{\sqrt{5(1-\beta)}}{\overline{s}} \atop (t_j,t_k) \ne (0,0)} X_{j,k}
	\qquad
	\big( \overline{s} = \max_{1 \le \ell \le L} \Vert \mathbf{s}_{\ell} \Vert \big),
\end{equation}
and 
\[
	F_n^{\beta} (f_m) = \sum_{\mathbf{t}_{j,k} \in D_n^{\beta}} f_m(\mathbf{t}_{j,k}),
	\qquad
	I_n^{\beta}(f_m) = \frac{1}{\Delta_n^2} \iint_{D_n^{\beta}} f_m(\mathbf{x}) d\mathbf{x}
\]
provide alternative approximations to $F_n(f)$. The main results of \cite{BoysalEcevitYildirim20} for the determination of the
asymptotic behavior of $F_n(f)$ are Theorems A and B:
\vskip.2cm
\noindent
{\bf Theorem A.}\;\;
\emph{If $\Sigma_n \in \{ F_n(f), F_n^{\beta} (f), F_n(f_1), F_n^{\beta} (f_m) , 
I_n(f), I_n^{\beta}(f), I_n(f_1), I_n^{\beta}(f_m) \}$ for some fixed
$\beta \in (0,1)$ and $m \ge 1$, then
\[
	\Sigma_n = \frac{|\Phi|}{\pi \sqrt{\det(S^TS)}} \, n^2 \log n + \mathcal{O}(n^2)
\]
as $n \to \infty$ where
\[
	S =
	\begin{bmatrix}
		s_{1,1} & \cdots & s_{1,\ell} & \cdots & s_{1,|\Phi|}
		\\
		s_{2,1} & \cdots & s_{2,\ell} & \cdots & s_{2,|\Phi|}
	\end{bmatrix}^T.
\]}

\noindent
{\bf Theorem B.}\;\;
\emph{As $n \to \infty$, we have
\begin{equation} \label{eq:simpleOlog}
	F_n(f) - I_n(f) + I_n(f_1) - F_n(f_1)
	= \mathcal{O}(\log n),
\end{equation}
and, for any fixed $\beta \in (0,1)$,
\begin{equation} \label{eq:Olog-O1}
	F_n(f) - I_n(f) + I_n^{\beta}(f_m) - F_n^{\beta}(f_m) = 
	\left\{ 	
		\begin{array}{cl}
			\mathcal{O}(\log n), & m = 1,
			\\
			\mathcal{O}(1), & m \ge 2.	
		\end{array}
	\right.
\end{equation}}
\vskip.2cm
\noindent The asymptotic behavior of $I_n(f_1)$ and $F_n(f_1)$ were derived in the general setting of the sum \eqref{eq:Fn-concrete} with respective error terms of
$\displaystyle{\mathcal{O}(\frac{1}{n^2})}$ and $\displaystyle{\mathcal{O}(\frac{\log n}{n^2})}$ in \cite{BoysalEcevitYildirim20}. That of the integral $I_n(f)$ 
was derived up to an error term
of $\mathcal{O}(1)$ when $F_n(f)$ pertains to the triangular lattice in \cite{BoysalEcevitYildirim18}, and when it corresponds to the square and modified
union jack lattices in \cite{BoysalEcevitYildirim20}. Using \eqref{eq:simpleOlog}, the asymptotic behavior of $F_n(f)$ in these three cases 
were obtained within errors of $\mathcal{O}(\log n)$.

As is apparent from \eqref{eq:Olog-O1}, however, the ideal choice for the methods developed in \cite{BoysalEcevitYildirim20} is $m=2$
as it yields the optimal error bound of $\mathcal{O}(1)$ with the least possible effort. On the other hand, working with $I_n^{\beta}(f_2)$ and $F_n^{\beta}(f_2)$ is significantly more challenging in the general setting of the sum \eqref{eq:Fn-concrete} when compared to $I_n(f_1)$ and $F_n(f_1)$. This is because, in addition to the significantly more difficult analyses, the former demands working with the factorization of a polynomial of degree four (namely $p_2$) so as to obtain a partial fraction decomposition of $f_2$ to begin with, whereas the latter involves $p_1$ which is a polynomial of degree two only.

With this in mind, in this paper we study the asymptotic behavior of $I_n^{\beta}(f_2)$ and $F_n^{\beta}(f_2)$ associated with the
sum $F_n = F_n(f) = F_n^{\rm sq}$ corresponding to the square lattice. Let us note that in this case (cf. \eqref{eq:fandpsi}, \eqref{eq:fmandpm},
and \eqref{eq:Dnbeta})
\[
	f(\mathbf{x}) = \dfrac{1}{\psi(\mathbf{x})} = \dfrac{1}{1-\frac{1}{2} (\cos {x} + \cos {y})},
\]
\[
	f_1(\mathbf{x}) = \dfrac{1}{p_1(\mathbf{x})} = \dfrac{4}{x^2+y^2},
	\qquad
	f_2(\mathbf{x}) = \dfrac{1}{p_2(\mathbf{x})} = \dfrac{4}{x^2+y^2 - \frac{1}{12} (x^4+y^4)},
\]
and $\overline{s} = 1$. For the calculations and analyses that follow, we choose
\[
	\beta = 1- \frac{\pi^2}{20} \in (0,1)
\]
so that
\begin{equation} \label{eq:Dneps}
	D_{n}^{\beta}
	= \bigcup_{|t_{j}|, |t_k| \le \frac{\pi}{2} \atop (t_j,t_k) \ne (0,0)} X_{j,k}
	= [-\beta_n,\beta_n]^2 \backslash (-\frac{\pi}{n},\frac{\pi}{n})^2
\end{equation}
where
\begin{equation} \label{eq:n0}
	\beta_n = \frac{\pi}{2} \left( 1 + \frac{2-n_0}{n} \right)
	\qquad
	\text{with}
	\qquad
	n \equiv n_0 \ (\bmod 4),
	\quad
	n_0 \in \{0,1,2,3\}.
\end{equation}
Observe that the second identity in \eqref{eq:Dneps} is a consequence of
\begin{equation} \label{eq:explainthis}
	|t_j| \le \dfrac{\pi}{2}
	\
	\Leftrightarrow
	\
	|j| \le \dfrac{n-n_0}{4} 
	\
	\Leftrightarrow
	\quad
	-\dfrac{2\pi}{n} \, \dfrac{n-n_0}{4} - \dfrac{\pi}{n} \le x_j \le \dfrac{2\pi}{n} \, \dfrac{n-n_0}{4} - \dfrac{\pi}{n}.
\end{equation}

\noindent
\emph{Proof of Theorem \ref{thm:main}.} As was shown in \cite{BoysalEcevitYildirim20}, for the square lattice we have
\begin{equation} \label{eq:Inf}
	I_n(f) = \frac{2}{\pi} n^2 \log n + \frac{1}{\pi} \big( \log \big( \frac{8}{\pi^2} \big) + \frac{4G}{\pi} \big)n^2 + \mathcal{O}(1)
\end{equation}
where $G$ is Catalan's constant. In \S3, we show that 
\begin{equation} \label{eq:Inf2}
	I_n^{\beta}(f_2) = a_0 \, n^2 \log n + a_1 \, n^2 + a_2 \, n + \mathcal{O}(1)
\end{equation}
by explicitly determining the constants $a_j$. In \S4, we prove that
\begin{equation} \label{eq:Fnf2}
	F_n^{\beta}(f_2) = b_0 \, n^2 \log n + b_1 \, n^2 + b_2 \, n + \mathcal{O}(1)
\end{equation}
without making the constants $b_j$ explicit. (The proof of \eqref{eq:Fnf2} given in \S4 is based on a decomposition of $F_n(f_2)$
into six pieces followed by a study of their asymptotic behavior. For three of them the coefficients will be given explicitly.) In fact,
we do not need the explicit values of these constants since the use of \eqref{eq:Inf}, \eqref{eq:Inf2}, and \eqref{eq:Fnf2}  
in \eqref{eq:Olog-O1} implies that
\[
	F_n(f) = c_0 \, n^2 \log n + c_1 \, n^2 + c_2 \, n + \mathcal{O}(1)
\]
for some constants $c_j$ and this, in turn, implies through \eqref{eq:Fnfsqold} that these constants must be as given in
\eqref{eq:Fnfsq}. Thus Theorem \ref{thm:main} follows.
\hfill $\square$

\section{Asymptotic behavior of $I_n^{\beta}(f_2)$}

In this section we prove the following for the asymptotic behavior of $I_n^{\beta}(f_2)$.
\begin{Prop} \label{prop:Inbetaf2asymp}
As $n \to \infty$, we have
\begin{multline*}
	I_n^{\beta}(f_2)
	= \frac{2}{\pi} n^2 \log n
	+ \frac{2}{\pi} 
	\Big(
		\frac{2G + \lambda}{\pi} + \log \frac{2\sqrt{6}}{\pi} 	
	\Big) n^2
	\\
	+ \frac{96}{\pi^2 \mu} 
	\Big(
		2 \sqrt{\frac{\nu+1}{\nu-1}} \arctan \sqrt{\frac{\nu+1}{\nu-1}} + \frac{1}{\sqrt{\nu}} \log \Big( \frac{\sqrt{\nu}+1}{\sqrt{\nu}-1} \Big)
	\Big) 
	(2-n_0) n
	+ \mathcal{O}(1),
\end{multline*}
where, denoting the Clausen function by $\operatorname{Cl}_2$,
\begin{multline*}
	\lambda
	= \operatorname{Cl}_2(2 \arctan \rho)
	- \operatorname{Cl}_2(\pi + 2 \arctan \rho)
	+ (\frac{\pi}{2} + 2 \arctan \rho) \, \log (\rho)
	\\
	- \operatorname{Cl}_2\big( \frac{\pi}{2} + \arccos(\frac{\nu-1}{\nu+1}) \big)
	- \operatorname{Cl}_2\big( \frac{\pi}{2} - \arccos(\frac{\nu-1}{\nu+1}) \big),
\end{multline*}
and
\[
	\mu = \sqrt{24^2+48\pi^2-\pi^4},
	\qquad
	\nu = \frac{\mu + 24}{\pi^2},
	\qquad
	\rho = \nu-\sqrt{\nu^2-1}.
\]
\end{Prop}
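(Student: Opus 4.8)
The plan is to evaluate the double integral
\[
	I_n^{\beta}(f_2) = \frac{1}{\Delta_n^2} \iint_{D_n^{\beta}} \frac{4}{x^2+y^2 - \frac{1}{12}(x^4+y^4)} \, d\mathbf{x}
\]
over the square annulus $D_n^{\beta} = [-\beta_n,\beta_n]^2 \backslash (-\frac{\pi}{n},\frac{\pi}{n})^2$ exactly, then expand the result in powers of $n$ using $\Delta_n = 2\pi/n$ and the definition \eqref{eq:n0} of $\beta_n = \frac{\pi}{2}(1 + \frac{2-n_0}{n})$. First I would note that, by the fourfold symmetry of the integrand under sign changes of $x$ and $y$, it suffices to work on the first-quadrant region and multiply by $4$; the annular domain then splits as $[0,\beta_n]^2$ minus the small square $[0,\pi/n]^2$ minus the two coordinate strips that are double-counted, so really one reduces everything to integrals over axis-parallel rectangles $[0,A]\times[0,B]$ of $1/p_2$.

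The heart of the matter is the partial fraction decomposition of $f_2$ with respect to one variable. Writing $p_2(\mathbf{x}) = x^2 - \frac{1}{12}x^4 + (y^2 - \frac{1}{12}y^4)$, for fixed $y$ this is a quartic in $x$ whose roots I would locate explicitly: $x^2 = 6 \pm \sqrt{36 - 12(y^2 - \frac{1}{12}y^4)} = 6 \pm \sqrt{36 - 12 q(y)}$ where $q(y) = y^2 - \frac{1}{12}y^4$. Decomposing $\frac{1}{p_2}$ into a sum of two terms of the form $\frac{C_i}{x^2 - r_i(y)}$ and integrating in $x$ produces, depending on the sign of $r_i(y)$, either an $\operatorname{arctanh}$/logarithm or an $\arctan$; this is exactly where the constants $\mu = \sqrt{24^2 + 48\pi^2 - \pi^4}$, $\nu = (\mu+24)/\pi^2$, and $\rho = \nu - \sqrt{\nu^2-1}$ must enter — they are the values of these root expressions and the associated algebraic combinations evaluated at the boundary $y = \beta_n \approx \pi/2$, so $q(\pi/2) = \frac{\pi^2}{4} - \frac{\pi^4}{192}$ and the discriminant $36 - 12 q(\pi/2)$ is a rational multiple of $\mu^2$. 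After the $x$-integration one is left with a single integral in $y$ of a combination of $\arctan$ and $\log$ composed with algebraic functions of $q(y)$; substituting $y \mapsto$ an algebraic function of $q$, or recognizing the integrand as a derivative of a dilogarithm-type expression, yields the Clausen function values $\operatorname{Cl}_2$ appearing in $\lambda$. The $2G/\pi$ term (with $G$ Catalan's constant) and the $\log\frac{2\sqrt 6}{\pi}$ term come out of the $n^2$-order part of the expansion, matching the structure of \eqref{eq:Inf}; the factor $\sqrt 6$ is the scale $\sqrt{12}/\sqrt 2$ natural to $p_2$.

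The $n$-order term is obtained by differentiating the exact closed form with respect to the parameter $\frac{2-n_0}{n}$ that deforms $\beta_n$ away from $\pi/2$: since $\beta_n = \frac{\pi}{2} + \frac{\pi(2-n_0)}{2n}$, the linear-in-$1/n$ correction to the integral over $[0,\beta_n]^2$ is $(2-n_0)\,\frac{\pi}{n} \cdot \frac{\partial}{\partial A}\big(\text{boundary integral}\big)\big|_{A=\pi/2}$, and evaluating that boundary derivative at $y = \pi/2$ gives precisely the bracketed expression $2\sqrt{\frac{\nu+1}{\nu-1}}\arctan\sqrt{\frac{\nu+1}{\nu-1}} + \frac{1}{\sqrt\nu}\log\frac{\sqrt\nu+1}{\sqrt\nu-1}$ times the prefactor $96/(\pi^2\mu)$ (the $96 = 4 \cdot 24$ and the $\mu$ in the denominator both tracking the normalization of the quartic's leading coefficient $-1/12$). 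All remaining contributions — the subtraction of the $[0,\pi/n]^2$ corner, where $f_2(\mathbf{x}) = 4/(x^2+y^2) + \mathcal{O}(\mathbf{x}^2)$ so the excised piece is $\mathcal{O}(1)$, and the strip corrections — are $\mathcal{O}(1)$.

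I expect the main obstacle to be the $y$-integration after the partial fraction step: one must correctly identify the primitive of an $\arctan$ (or $\log$) of an algebraic function of $q(y) = y^2 - \frac{1}{12}y^4$ as a Clausen/dilogarithm expression, keeping careful track of branch choices and of which of the two quartic roots $r_i(y)$ is positive versus negative over the range $y \in [0,\pi/2]$ (this sign can in principle change, forcing a split of the interval). Getting the five Clausen terms in $\lambda$ with the right arguments — $2\arctan\rho$, $\pi + 2\arctan\rho$, and $\frac{\pi}{2} \pm \arccos\frac{\nu-1}{\nu+1}$ — and the right signs, together with the $(\frac{\pi}{2} + 2\arctan\rho)\log\rho$ boundary term, is the delicate bookkeeping that constitutes the real work; everything else is a matter of organizing the symmetry reductions and the $1/n$-expansion.
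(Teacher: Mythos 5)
Your route (Cartesian partial fractions in $x$ for fixed $y$) is genuinely different from the paper's, which passes to polar coordinates: there the radial integrand factors as $\frac{1}{r-\frac{\eta^2(\theta)}{12}r^3}=\frac{1}{r}+\frac{r\,\eta^2(\theta)}{12-r^2\eta^2(\theta)}$ with $\eta^2(\theta)=\cos^4\theta+\sin^4\theta$, so that $I_n^{\beta}(f_2)$ splits as $I_n^{\beta}(f_1)$ (computed exactly, and carrying the entire $\frac{2}{\pi}n^2\log n$) plus an angular integral of $\log(12\cos^2\theta-\beta_n^2\eta^2(\theta))$; the latter factors as a quadratic in $\cos 2\theta$ and reduces to $\int_0^{\pi/2}\log(\tau_n\pm\cos\theta)\,d\theta$, which Lewin's tabulated formulas evaluate in closed form via $\operatorname{Cl}_2$. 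That is where $\mu$, $\nu$, $\rho$ and the five Clausen values come from, and the $(2-n_0)n$ term arises from expanding $\tau_n=a+b(n)/n+\mathcal{O}(n^{-2})$ inside those one-dimensional integrals rather than from differentiating a boundary term.

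Two concrete gaps in your version. First, the accounting of the inner cutoff is wrong: you cannot ``evaluate the integral over $[0,\beta_n]^2$ exactly and subtract the $[0,\pi/n]^2$ corner,'' because $f_2\sim 4/|\mathbf{x}|^2$ is not locally integrable in two dimensions, so both pieces diverge; and the corner is in any case not an $\mathcal{O}(1)$ effect — the leading term $\frac{2}{\pi}n^2\log n$ is produced precisely by the $\pi/n$-cutoff acting on the $4/(x^2+y^2)$ singularity (cf.\ $I_n^{\beta}(f_1)=\frac{8\pi}{\Delta_n^2}\log\frac{n\beta_n}{\pi}$). What is true, and what your argument needs, is that $f_2-f_1=\frac{x^4+y^4}{3(x^2+y^2)^2}+\mathcal{O}(|\mathbf{x}|^2)$ is merely bounded near the origin (not $\mathcal{O}(|\mathbf{x}|^2)$ as you assert), so only after splitting off $f_1$ does the corner contribute $\mathcal{O}(1)$. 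Second, the reduction of the remaining $y$-integrals to the stated Clausen values is asserted, not derived: after the $x$-integration you face $\int_0^{\beta_n}$ of $\operatorname{arctanh}$ and $\arctan$ of algebraic functions of $y$ (note the roots are $x^2=6\pm\sqrt{36+12q(y)}$, not $6\pm\sqrt{36-12q(y)}$; one root is negative and degenerates as $y\to 0$, which is exactly the singular regime you have deferred to the corner). Producing $\operatorname{Cl}_2$ at the five specific arguments in $\lambda$, with signs and the $(\frac{\pi}{2}+2\arctan\rho)\log\rho$ term, is the entire content of the proposition, and it is not at all routine in Cartesian variables; the polar substitution is what makes it a table lookup. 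As written, the proposal is a plausible program rather than a proof.
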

\noindent Note that $\mu \approx 30.86, \ \nu \approx 5.56, \ \rho \approx 0.09$.

The proof of Proposition \ref{prop:Inbetaf2asymp} is based on several lemmas.
\begin{Lem} \label{lemma:Inbetaf2asymp1}
As $n \to \infty$, we have
\begin{equation} \label{eq:Inf2asy2}
	I_n^{\beta}(f_2)
	= \frac{2}{\pi} n^2 \log n
	+ \frac{2}{\pi} \big( \frac{2G}{\pi} + \log \frac{\sqrt{6}}{\pi} - \frac{1}{\pi} \big( J_{n,1} + J_{n,2} \big) \big) n^2
	+ \mathcal{O}(1)
\end{equation}
with 
\begin{equation} \label{eq:Jn1Jn2}
	J_{n,1} = \int_{0}^{\pi/2} \log(2u_{n}-1 - \cos \theta) \, d\theta,
	\quad
	J_{n,2} = \int_{0}^{\pi/2} \log(\cos\theta +1 - \frac{1}{u_n}) \, d\theta
\end{equation}
where 
\[  
	u_{n} = \alpha_n+\sqrt{\alpha_n^2-\frac{1}{2}}
	\quad
	\text{with}
	\quad
	\alpha_n = \frac{\beta_n^2 +6}{2\beta_n^2}.
\]
\end{Lem}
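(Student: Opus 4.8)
The plan is to turn $I_n^\beta(f_2)$ into a one–dimensional integral by exploiting the symmetry of the integrand, evaluate the resulting inner integral in closed form, and then isolate the part of the remaining $\theta$–integral that cannot be made elementary — which is exactly $J_{n,1}+J_{n,2}$.

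Concretely, since $\Delta_n=2\pi/n$ and $f_2(\mathbf{x})=4/(x^2+y^2-\tfrac1{12}(x^4+y^4))$, we have $I_n^\beta(f_2)=\tfrac{n^2}{\pi^2}\iint_{D_n^\beta}(x^2+y^2-\tfrac1{12}(x^4+y^4))^{-1}\,dx\,dy$. The integrand is invariant under $(x,y)\mapsto(\pm x,\pm y)$ and $(x,y)\mapsto(y,x)$, and by \eqref{eq:Dneps} so is $D_n^\beta$, so the double integral equals $8$ times the integral over the part of the sector $0\le y\le x$ lying in $D_n^\beta$. In polar coordinates $x^2+y^2-\tfrac1{12}(x^4+y^4)=r^2\big(1-\tfrac{g(\theta)}{12}r^2\big)$ with $g(\theta)=\cos^4\theta+\sin^4\theta$, and that region is $\{\theta\in[0,\tfrac\pi4],\ \tfrac{\pi}{n\cos\theta}\le r\le\tfrac{\beta_n}{\cos\theta}\}$. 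This is where the choice $\beta=1-\pi^2/20$ is used: by \eqref{eq:Dnbeta} it makes $D_n^\beta$ the square annulus of \eqref{eq:Dneps} and guarantees that $p_2$, hence $r^2\big(1-\tfrac{g(\theta)}{12}r^2\big)$, does not vanish on $D_n^\beta$; being positive near the inner boundary and $D_n^\beta$ connected, it is positive throughout, so $1-\tfrac{g(\theta)}{12}r^2>0$ on the sector and the $r$–integration meets no singularity.

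Next I would compute the inner integral from $\int\frac{dr}{r(1-cr^2)}=\tfrac12\log\frac{r^2}{1-cr^2}$. The upper endpoint contributes exactly $\tfrac12\big(2\log\beta_n-\log(\cos^2\theta-\tfrac{g(\theta)\beta_n^2}{12})\big)$; the lower endpoint, expanded in powers of $1/n$, contributes $\tfrac12(2\log\pi-2\log n-2\log\cos\theta)+\mathcal{O}(1/n^2)$, uniformly for $\theta\in[0,\tfrac\pi4]$ since $\cos\theta$ is bounded away from $0$ there. Integrating the elementary pieces over $[0,\tfrac\pi4]$ already yields the $\tfrac2\pi n^2\log n$ term and constants involving $\int_0^{\pi/4}\log\cos\theta\,d\theta$; the only term needing more work is $-\int_0^{\pi/4}\log\big(\cos^2\theta-\tfrac{g(\theta)\beta_n^2}{12}\big)\,d\theta$. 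For this, substitute $\phi=2\theta$ and set $a=\beta_n^2/12$, so $\cos^2\theta=\tfrac{1+\cos\phi}{2}$, $g(\theta)=\tfrac{1+\cos^2\phi}{2}$, and $\cos^2\theta-\tfrac{g(\theta)\beta_n^2}{12}$ becomes $\tfrac a2$ times a quadratic in $\cos\phi$. One checks that $\alpha_n=\tfrac12+\tfrac1{4a}=\tfrac{\beta_n^2+6}{2\beta_n^2}$, so $u_n=\alpha_n+\sqrt{\alpha_n^2-\tfrac12}$ is defined precisely so that $2u_n-1$ is the larger root of that quadratic, and Vieta's formula then forces the other root to be $\tfrac1{u_n}-1$. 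Since $2u_n-1>1$ and $\tfrac1{u_n}-1<0$, the factors $2u_n-1-\cos\phi$ and $\cos\phi+1-\tfrac1{u_n}$ are positive on $[0,\tfrac\pi2]$, so $\log(\cos^2\theta-\tfrac{g(\theta)\beta_n^2}{12})=\log\tfrac a2+\log(2u_n-1-\cos 2\theta)+\log(\cos 2\theta+1-\tfrac1{u_n})$, and substituting $\phi=2\theta$ in the last two logs turns their $\theta$–integrals into $\tfrac12 J_{n,1}$ and $\tfrac12 J_{n,2}$. Assembling everything, multiplying by $8n^2/\pi^2$, expanding $\log\beta_n$ and $\log(a/2)$ via $\beta_n=\tfrac\pi2(1+\tfrac{2-n_0}{n})$ — whereupon the $n$–order contributions cancel — and using the classical value $\int_0^{\pi/4}\log\cos\theta\,d\theta=\tfrac G2-\tfrac\pi4\log 2$ to produce Catalan's constant, the constant multiplying $n^2$ should simplify to $\tfrac2\pi\big(\tfrac{2G}{\pi}+\log\tfrac{\sqrt6}{\pi}\big)$, giving \eqref{eq:Inf2asy2}.

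The step I expect to be the main obstacle is the factorization: showing cleanly that $\cos^2\theta-\tfrac{g(\theta)\beta_n^2}{12}$ splits with roots expressible through $u_n$ as above, and that both resulting factors stay positive on $[0,\tfrac\pi2]$ so that the logarithm may be split termwise. A secondary point to watch is keeping the $1/n$–expansion of the lower–endpoint contribution uniform in $\theta$ and verifying that no spurious term of order $n$ survives once all constants are combined.
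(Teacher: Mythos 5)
Your proposal is correct and follows essentially the same route as the paper: polar coordinates over the octant $0\le\theta\le\tfrac{\pi}{4}$, exact $r$-integration of $\tfrac{1}{r(1-cr^2)}$ (the paper phrases this as a partial fraction of $\tfrac{1}{r-\frac{\eta^2}{12}r^3}$), factorization of the resulting quadratic via $u_n$ (the paper factors in $\cos^2\theta$ with roots $u_n$ and $\tfrac{1}{2u_n}$ and then passes to $\cos2\theta$, which is your Vieta argument in different clothing), and the Clausen/Catalan evaluation of $\int_0^{\pi/4}\log\cos\theta\,d\theta$. Your observation that the explicit order-$n$ terms cancel (with the residual $n$-dependence carried inside $J_{n,1}+J_{n,2}$) is also exactly what happens in the paper's bookkeeping.
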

\begin{proof}
Considering $I_n^{\beta}(f_1)$ first, switching to polar coordinates, we compute
\begin{align*}
	I_n^{\beta}(f_1)
	& = \frac{1}{\Delta_n^2} \iint_{D_n^{\beta}} f_1 \, dx \, dy
	= \frac{4}{\Delta_n^2} \iint_{D_n^{\beta}} \dfrac{dx \, dy}{x^2+y^2}
	= \frac{16}{\Delta_n^2} \iint\limits_{[0,\beta_n]^2 \backslash [0,\frac{\pi}{n}]^2}
	\dfrac{dx \, dy}{x^2+y^2}
	\\
	& = \frac{32}{\Delta_n^2} \int_{0}^{\frac{\pi}{4}} \int_{\frac{\pi}{n \, \cos \theta}}^{\frac{\beta_n}{\cos \theta}}
	\dfrac{1}{r} \, dr \, d\theta
	= \frac{8\pi}{\Delta_n^2} \, \log \frac{n \, \beta_n}{\pi},
\end{align*}
so that
\begin{align} \label{eq:Inf1asy}
	I_n^{\beta}(f_1)
	= \frac{2}{\pi} \, n^2 \log n - \frac{\log 4}{\pi} \, n^2 + \frac{2(2-n_0)}{\pi} \, n + \mathcal{O} \big( 1\big).
\end{align}
Similarly, writing $\eta^2(\theta) = \cos^{4} \theta + \sin^4 \theta$, we have 
\begin{align*}
	I_n^{\beta}(f_2)
	& = \frac{1}{\Delta_n^2} \iint_{D_n^{\beta}} f_2 \, dx \, dy
	= \frac{4}{\Delta_n^2} \iint_{D_n^{\beta}} \dfrac{dx \, dy}{x^2+y^2-\frac{1}{12}(x^4+y^4)}
	\\
	& = \frac{16}{\Delta_n^2} \iint\limits_{[0,\beta_n]^2 \backslash [0,\frac{\pi}{n}]^2}
	\dfrac{dx \, dy}{x^2+y^2-\frac{1}{12}(x^4+y^4)}
	= \frac{32}{\Delta_n^2} \int_{0}^{\frac{\pi}{4}} \int_{\frac{\pi}{n \, \cos \theta}}^{\frac{\beta_n}{\cos \theta}}
	\dfrac{dr \, d\theta}{r - \frac{\eta^2(\theta)}{12} r^3}
	\\
	& = \frac{32}{\Delta_n^2} \int_{0}^{\frac{\pi}{4}} \int_{\frac{\pi}{n \, \cos \theta}}^{\frac{\beta_n}{\cos \theta}}
	\Big( \frac{1}{r} + \dfrac{r \, \eta^2(\theta)}{12-r^2 \, \eta^2(\theta)} \Big) dr \, d\theta,
\end{align*}
and therefore
\begin{equation} \label{eq:Inf2asy1}
	I_n^{\beta}(f_2)
	= I_n^{\beta}(f_1)
	+ \frac{16}{\Delta_n^2} \int_{0}^{\frac{\pi}{4}}
	\Big( \log(12 - (\frac{\pi \, \eta(\theta)}{n \, \cos \theta})^2 ) - \log(12 - (\frac{\beta_n \eta(\theta)}{\cos \theta})^2) \Big) \, d\theta.
\end{equation}
We have
\begin{align}
	\frac{16}{\Delta_n^2} \int_{0}^{\frac{\pi}{4}} \log(12 - (\frac{\pi \, \eta(\theta)}{n \, \cos \theta})^2) d\theta
	& = \frac{16}{\Delta_n^2} \int_{0}^{\frac{\pi}{4}} \! \Big( \log 12
		+ \mathcal{O} \Big( \frac{1}{n^2}\Big) \Big) d\theta =   \frac{\log 12}{\pi} n^2 + \mathcal{O} (1).
	\label{eq:Int1}		
\end{align}
On the other hand, using 
\[
	\int_{0}^{\theta} \log(\cos\phi) \, d\phi
	= -\theta \log 2 + \frac{1}{2} \operatorname{Cl}_2(\pi - 2 \theta)
\]
(\cite[p.306, Formula 5]{L}), we have
\begin{align}
	\int_{0}^{\frac{\pi}{4}} \!
	\log\big( 12 - (\frac{\beta_n \eta(\theta)}{\cos \theta})^2 \big) d\theta
	& = \!\! \int_{0}^{\frac{\pi}{4}}
	\!\! \big[ \log(12 \cos^2 \theta - \beta_n^2 \eta^2(\theta)) - 2 \log (\cos\theta) \big] d\theta
	\label{eq:mid}
	\\
	& = \!\int_{0}^{\frac{\pi}{4}} \!\!\log(12 \cos^2 \theta - \beta_n^2 \eta^2(\theta)) \, d\theta
	+ \frac{\pi \, \log 2 - 2G}{2}.
	\nonumber
\end{align}
Since $\eta^2(\theta) = 2\cos^4 \theta -2\cos^2\theta + 1$, we have
\begin{align*}
	12 \cos^2 \theta - \beta_n^2 & \eta^2(\theta)
	= -2\beta_n^2 (\cos^4\theta -2\alpha_n\cos^2\theta+\frac{1}{2})
	\\
	& = \frac{\beta_n^2}{2}
	\big(2(\alpha_n+\sqrt{\alpha_n^2-\frac{1}{2}})-2\cos^2\theta\big)
	\big(2\cos^2\theta-2(\alpha_n-\sqrt{\alpha_n^2-\frac{1}{2}})\big)
	\\
	& = \frac{\beta_n^2}{2}
	\big(2u_n-1-\cos2\theta \big)
	\big(\cos2\theta+1-\frac{1}{u_n}\big),
\end{align*}
so that
\begin{align*}
	& \int_{0}^{\pi/4} \log(12 \cos^2 \theta - \beta_n^2 \eta^2(\theta)) \, d\theta
	\\
	& \ = \frac{\pi}{4} \log( \frac{\beta_n^2}{2})
	+ \int_{0}^{\pi/4} \log( (2u_{n}-1 - \cos 2\theta) (\cos2\theta +1 - \frac{1}{u_n})) \, d\theta
	\\
	& \ = \frac{\pi}{4} \log( \frac{\beta_n^2}{2})
	+ \frac{1}{2} \int_{0}^{\pi/2} \log(2u_{n}-1 - \cos \theta) \, d\theta
	+ \frac{1}{2} \int_{0}^{\pi/2} \log(\cos\theta +1 - \frac{1}{u_n}) \, d\theta.
\end{align*}
Using this last identity in \eqref{eq:mid} and then using \eqref{eq:Inf1asy}, \eqref{eq:Int1} and \eqref{eq:mid} in \eqref{eq:Inf2asy1},
we obtain \eqref{eq:Inf2asy2}.
\end{proof}
To understand the asymptotic behavior of the integrals $J_{n,1}$ and $J_{n,2}$ in \eqref{eq:Jn1Jn2}, we note that
\[
	\alpha_n
	= \frac{1}{2} + \frac{3}{\beta_n^2}
	= \frac{1}{2} + \frac{12}{\pi^2} \frac{1}{\big( 1+\dfrac{2-n_0}{n}\big)^2}
	= \frac{1}{2} + \frac{12}{\pi^2} - \frac{24}{\pi^2} \dfrac{2-n_0}{n}
	+\mathcal{O} \Big( \frac{1}{n^2} \Big),
\]
\[
	\sqrt{\alpha_n^2-\frac{1}{2}}
	= \frac{\mu}{2\pi^2} - \frac{24}{\pi^2} \frac{24 + \pi^2}{\mu} \dfrac{2-n_0}{n}
	+\mathcal{O} \Big( \frac{1}{n^2} \Big),
\]
so that
\begin{equation} \label{eq:2unm1}
	2u_{n}-1
	= \nu 
	- \frac{48(\nu +1)}{\mu} \dfrac{2-n_0}{n}
	+ \mathcal{O}\Big(\frac{1}{n^{2}} \Big),
	\quad
	\big( \nu \approx 5.6 \big),
\end{equation}
and
\begin{equation} \label{eq:1m1oun}
	1- \frac{1}{u_n}
	= \frac{\nu-1}{\nu+1}
	- \frac{96}{\mu(\nu+1)} \dfrac{2-n_0}{n}
	+ \mathcal{O}\Big(\frac{1}{n^{2}} \Big),
	\quad
	\Big( \frac{\nu-1}{\nu+1} \approx 0.7 \Big).
\end{equation}
We need to study
the following integrals to understand $J_{n,1}$ and $J_{n,2}$:
\[
	J_{1}(\tau_n) = \int_{0}^{\pi/2} \log(\tau_{n}-\cos \theta) \, d\theta
	\quad
	\text{with} \quad \tau_n = 2u_{n}-1 \approx 5.6, \]  
\[ J_{2}(\tau_n) = \int_{0}^{\pi/2} \log(\cos \theta +\tau_{n}) \, d\theta \quad \text{with} \quad	\tau_n = 1- \frac{1}{u_n} \approx 0.7.
\]
For either case we write
\begin{equation} \label{eq:taun}
	\tau_{n} =  a + \frac{b(n)}{n} + \mathcal{O} \Big( \frac{1}{n^{2}} \Big)
\end{equation}
where $b(n)$ is bounded.

\begin{Lem} \label{lemma:J1asymp}
If the sequence $\{ \tau_n\}$ satisfies \eqref{eq:taun} with $a > 1$, then as $n \to \infty$ we have
\[
	J_{1}(\tau_n)
	= 
	J_{11}(a)
	+ J_{12}(a) \frac{b(n)}{n} 
	+ \mathcal{O}\Big(\frac{1}{n^{2}}\Big)
\]
where
\begin{multline*}
	J_{11}(a)
	= 
	\operatorname{Cl}_2(\pi + 2 \arctan(a-\sqrt{a^2-1}))
	- \operatorname{Cl}_2(2 \arctan(a-\sqrt{a^2-1}))
	\\
	- (\frac{\pi}{2}+ 2 \arctan(a-\sqrt{a^2-1})) \, \log (a-\sqrt{a^2-1})
	- \frac{\pi}{2} \log 2
\end{multline*} 
and
\[
	J_{12}(a)
	= \frac{2}{\sqrt{a^2-1}} \arctan \big( \sqrt{\frac{a+1}{a-1}} \big).
\]
\end{Lem}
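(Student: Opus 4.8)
\emph{Proof plan.}
The strategy is to strip the $n$-dependence off with a first-order Taylor expansion of the integrand in $\tau_n$ about $a$, and then to evaluate in closed form the two $n$-independent integrals that survive.

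First I would observe that, since $a>1$ and $\cos\theta\le 1$ on $[0,\pi/2]$ while $\tau_n\to a$ by \eqref{eq:taun}, there is a fixed $\delta>0$ with $\tau_n-\cos\theta\ge\delta$ for all large $n$ and all $\theta\in[0,\pi/2]$. Consequently $\log(\tau_n-\cos\theta)=\log(a-\cos\theta)+\frac{\tau_n-a}{a-\cos\theta}+\mathcal{O}\big((\tau_n-a)^2\big)$ with the $\mathcal{O}$-term uniform in $\theta$. Integrating this over $[0,\pi/2]$ and using $\tau_n-a=\frac{b(n)}{n}+\mathcal{O}(n^{-2})$ with $b(n)$ bounded gives $J_1(\tau_n)=J_{11}(a)+\frac{b(n)}{n}\,J_{12}(a)+\mathcal{O}(n^{-2})$, where $J_{11}(a)=\int_0^{\pi/2}\log(a-\cos\theta)\,d\theta$ and $J_{12}(a)=\int_0^{\pi/2}\frac{d\theta}{a-\cos\theta}$. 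It then remains to compute these two integrals.

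For $J_{12}(a)$ the Weierstrass substitution $t=\tan(\theta/2)$ gives $J_{12}(a)=\int_0^1\frac{2\,dt}{(a+1)t^2+(a-1)}$, which integrates at once to $\frac{2}{\sqrt{a^2-1}}\arctan\sqrt{\frac{a+1}{a-1}}$, the claimed value. For $J_{11}(a)$ I would put $\rho=a-\sqrt{a^2-1}\in(0,1)$, so that $\rho+\rho^{-1}=2a$ and $a-\cos\theta=\frac{1-2\rho\cos\theta+\rho^2}{2\rho}=\frac{|1-\rho e^{i\theta}|^2}{2\rho}$; hence $J_{11}(a)=-\frac{\pi}{2}\log(2\rho)+\int_0^{\pi/2}\log(1-2\rho\cos\theta+\rho^2)\,d\theta$. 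Expanding $\log(1-2\rho\cos\theta+\rho^2)=-2\sum_{k\ge1}\frac{\rho^k\cos k\theta}{k}$ and integrating term by term (legitimate since $\rho<1$), with $\int_0^{\pi/2}\cos k\theta\,d\theta=\frac{\sin(k\pi/2)}{k}$, yields $\int_0^{\pi/2}\log(1-2\rho\cos\theta+\rho^2)\,d\theta=-2\operatorname{Ti}_2(\rho)$ where $\operatorname{Ti}_2(\rho)=\int_0^\rho\frac{\arctan t}{t}\,dt$. Substituting $t=\tan\phi$ and integrating by parts (with $v=\log\tan\phi$, since $\int\frac{d\phi}{\sin\phi\cos\phi}=\log\tan\phi$) turns $\operatorname{Ti}_2(\rho)$ into $\arctan\rho\cdot\log\rho-\int_0^{\arctan\rho}\log\tan\phi\,d\phi$; writing $\log\tan=\log\sin-\log\cos$ and applying \cite[p.306, Formula 5]{L} and the analogous formula for $\log\sin$ evaluates the remaining integral as $-\tfrac{1}{2}\operatorname{Cl}_2(2\arctan\rho)-\tfrac{1}{2}\operatorname{Cl}_2(\pi-2\arctan\rho)$. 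Collecting the pieces and using $\operatorname{Cl}_2(\pi-x)=-\operatorname{Cl}_2(\pi+x)$ then gives precisely the stated formula for $J_{11}(a)$.

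The step I expect to require the most care is the Taylor reduction: one must check that the remainder is truly $\mathcal{O}(n^{-2})$ uniformly in $\theta$ — this is exactly where the hypothesis $a>1$ enters, keeping $\tau_n-\cos\theta$ bounded away from $0$ — and that this error survives integration over the fixed interval $[0,\pi/2]$. Once that is secured, the evaluations of $J_{11}(a)$ and $J_{12}(a)$ are routine, modulo bookkeeping with the Clausen-function identities drawn from \cite{L}.
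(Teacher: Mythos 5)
Your proposal is correct and follows essentially the same route as the paper: linearize $\log(\tau_n-\cos\theta)$ about $a$ (using $a>1$ to keep the denominator bounded away from zero uniformly in $\theta$), reduce to the two $n$-independent integrals $\int_0^{\pi/2}\log(a-\cos\theta)\,d\theta$ and $\int_0^{\pi/2}\frac{d\theta}{a-\cos\theta}$, and evaluate them via the substitution $\rho=a-\sqrt{a^2-1}$ and the Weierstrass substitution respectively. The only difference is cosmetic: where the paper quotes Lewin's tabulated formula for $\int_0^{\phi}\log(1-2r\cos\theta+r^2)\,d\theta$, you rederive it through the Fourier expansion and the inverse tangent integral $\operatorname{Ti}_2$, arriving (correctly, via $\operatorname{Cl}_2(\pi-x)=-\operatorname{Cl}_2(\pi+x)$) at the same closed form for $J_{11}(a)$.
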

\begin{proof}
For $a > 1$, we have
\begin{align*}
	J_{1}(\tau_n)
	& = \int_{0}^{\pi/2} \log(a-\cos \theta) \, d\theta
	+ \int_{0}^{\pi/2}\log\Big(1+\frac{\frac{b(n)}{n}+\mathcal{O}(\frac{1}{n^{2}})}{a-\cos\theta}\Big)\, d\theta
	\\
	& = I_1(a) + \frac{b(n)}{n} I_2(a) + \mathcal{O}\Big(\frac{1}{n^{2}}\Big)
\end{align*}
where
\[
	I_1(a) = \int_{0}^{\pi/2} \log(a-\cos \theta) \, d\theta
	\qquad
	\text{and}
	\qquad
	I_2(a) = \int_{0}^{\pi/2} \frac{d\theta}{a-\cos\theta}.
\]
To complete the proof, we show that $I_1(a) = J_{11}(a)$ and $I_2(a) = J_{12}(a)$. For the former,
we make use of \cite[p.308, Formula 39]{L}
\[
	\int_{0}^{\phi} \log (1-2r \, \cos \theta +r^2) d\theta
	= \operatorname{Cl}_2(2\phi + 2 \omega)
	- \operatorname{Cl}_2(2\phi)
	- \operatorname{Cl}_2(2 \omega)
	- 2\omega \, \log r, \; (0<r<1),
\]
where $\displaystyle \omega  = \arg(1-re^{-i\theta})$.
To this end, we set $r = a- \sqrt{a^2-1}$ so that $a= \frac{1+r^2}{2r}$ and therefore
(using $\operatorname{Cl}_2(\pi) = 0$)
\begin{align*}
	I_1(a)
	& = \int_0^{\pi/2} \log \big(\frac{1+r^2}{2r} -\cos \theta \big) \, d\theta
	\\
	& = \int_0^{\pi/2} \log (1 -2r\cos \theta +r^2) \, d\theta
	- \frac{\pi}{2} \log 2r
	= J_{11}(a).
\end{align*}
As for $I_2(a)$, we have
\[
	I_2(a) 
	= \frac{2}{\sqrt{a^2-1}} \arctan \big( \sqrt{\frac{a+1}{a-1}} \tan \frac{\theta}{2} \big) \Big|_{\theta = 0}^{\pi/2}
	=  J_{12}(a).
\]
This finishes the proof.
\end{proof}

\begin{Lem} \label{lemma:J2asymp}
If the sequence $\{ \tau_n\}$ satisfies \eqref{eq:taun} with $a \in (0,1)$, then as $n \to \infty$ we have
\[
	J_{2}(\tau_n)
	= 
	J_{21}(a)
	+ J_{22}(a) \frac{b(n)}{n} 
	+ \mathcal{O}\Big(\frac{1}{n^{2}}\Big)
\]
where
\[
	J_{21}(a)
	= 
	\operatorname{Cl}_2\big( \frac{\pi}{2} + \arccos a \big)
	+ \operatorname{Cl}_2\big( \frac{\pi}{2} - \arccos a \big)
	- \frac{\pi}{2} \log 2
\]
and
\[
	J_{22}(a)
	= \frac{1}{\sqrt{1-a^2}} \log \big( \frac{1+ \sqrt{1-a^2}}{a} \big).
\]
\end{Lem}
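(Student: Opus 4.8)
The plan is to follow the template of the proof of Lemma~\ref{lemma:J1asymp}, but with a different evaluation of the leading integral since here $a\in(0,1)$. First I would peel off the $n$-dependence exactly as there: writing
\[
	\cos\theta+\tau_n=(\cos\theta+a)\Bigl(1+\frac{\frac{b(n)}{n}+\mathcal{O}(\frac{1}{n^2})}{\cos\theta+a}\Bigr),
\]
and noting that $\cos\theta+a\ge a>0$ uniformly for $\theta\in[0,\pi/2]$ because $a\in(0,1)$, the bracketed perturbation is $\mathcal{O}(1/n)$ uniformly, so $\log(1+x)=x+\mathcal{O}(x^2)$ yields
\[
	J_2(\tau_n)=I_1(a)+\frac{b(n)}{n}\,I_2(a)+\mathcal{O}\Bigl(\frac{1}{n^2}\Bigr),
	\qquad
	I_1(a)=\int_0^{\pi/2}\!\!\log(a+\cos\theta)\,d\theta,
	\quad
	I_2(a)=\int_0^{\pi/2}\!\!\frac{d\theta}{a+\cos\theta}.
\]
It then remains to identify $I_1(a)=J_{21}(a)$ and $I_2(a)=J_{22}(a)$.

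For $I_2(a)$ the Weierstrass substitution $t=\tan(\theta/2)$ turns the integral into $\frac{2}{1-a}\int_0^1\frac{dt}{k^2-t^2}$ with $k=\sqrt{(1+a)/(1-a)}>1$; the elementary antiderivative gives $I_2(a)=\frac{1}{(1-a)k}\log\frac{k+1}{k-1}$, and the simplifications $(1-a)k=\sqrt{1-a^2}$ and $\frac{k+1}{k-1}=\frac{1+\sqrt{1-a^2}}{a}$ yield $I_2(a)=J_{22}(a)$. The evaluation of $I_1(a)$ is the crux, and here the factorization $a-\cos\theta=\frac{1}{2r}(1-2r\cos\theta+r^2)$ with $0<r<1$ used for $J_1$ is unavailable, since it would force $a\ge1$. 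Instead I would write $a=\cos\alpha$ with $\alpha=\arccos a\in(0,\pi/2)$ and use the product formula $\cos\alpha+\cos\theta=2\cos\frac{\alpha+\theta}{2}\cos\frac{\alpha-\theta}{2}$, both cosines being positive on the ranges that occur.

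Splitting the logarithm and substituting $u=\frac{\alpha+\theta}{2}$ in one piece and $v=\frac{\alpha-\theta}{2}$ in the other reduces $I_1(a)$ to
\[
	\tfrac{\pi}{2}\log 2+2\Bigl[C\bigl(\tfrac{\alpha}{2}+\tfrac{\pi}{4}\bigr)-C\bigl(\tfrac{\alpha}{2}-\tfrac{\pi}{4}\bigr)\Bigr],
	\qquad
	C(\theta)=\int_0^{\theta}\!\!\log\cos\phi\,d\phi=-\theta\log 2+\tfrac12\operatorname{Cl}_2(\pi-2\theta),
\]
the antiderivative (\cite[p.306, Formula 5]{L}) already used in the proof of Lemma~\ref{lemma:Inbetaf2asymp1}. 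Substituting and using the $\log 2$ cancellations leaves $-\frac{\pi}{2}\log 2+\operatorname{Cl}_2(\tfrac{\pi}{2}-\alpha)-\operatorname{Cl}_2(\tfrac{3\pi}{2}-\alpha)$; then the $2\pi$-periodicity and oddness of $\operatorname{Cl}_2$ rewrite $\operatorname{Cl}_2(\tfrac{3\pi}{2}-\alpha)=\operatorname{Cl}_2(-\tfrac{\pi}{2}-\alpha)=-\operatorname{Cl}_2(\tfrac{\pi}{2}+\alpha)$, and we obtain $I_1(a)=\operatorname{Cl}_2(\tfrac{\pi}{2}+\alpha)+\operatorname{Cl}_2(\tfrac{\pi}{2}-\alpha)-\frac{\pi}{2}\log 2=J_{21}(a)$ with $\alpha=\arccos a$.

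The main obstacle is exactly this last evaluation: one must recognize that the Clausen-integral trick of Lemma~\ref{lemma:J1asymp} fails for $a<1$, replace it by the sum-to-product factorization, and then bookkeep the arguments of $\operatorname{Cl}_2$ carefully---their location in $(0,2\pi)$ and the use of periodicity and oddness---to collapse the telescoped terms into the stated closed form. The uniform error bound and the computation of $I_2(a)$ are routine.
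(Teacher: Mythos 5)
Your proposal is correct. The overall structure coincides with the paper's: expand $\log\bigl(1+\frac{b(n)/n+\mathcal{O}(1/n^2)}{\cos\theta+a}\bigr)$ using that $\cos\theta+a\ge a>0$ on $[0,\pi/2]$, reduce to the two integrals $\int_0^{\pi/2}\log(\cos\theta+a)\,d\theta$ and $\int_0^{\pi/2}\frac{d\theta}{\cos\theta+a}$, and evaluate each in closed form; your Weierstrass-substitution computation of the second integral is just an unpacking of the antiderivative the paper writes down, and it simplifies correctly to $J_{22}(a)$. The one genuine difference is in the leading integral: the paper evaluates $\int_0^{\pi/2}\log(\cos\theta+a)\,d\theta$ by quoting Lewin's Formula 36 for $\int_0^{\varphi}\log(1+\sec\phi\cos\theta)\,d\theta$ with $a=\cos\phi$, $\varphi=\pi/2$, whereas you rederive that identity in the special case needed, via the sum-to-product factorization $\cos\alpha+\cos\theta=2\cos\frac{\alpha+\theta}{2}\cos\frac{\alpha-\theta}{2}$ and the antiderivative $\int_0^{\theta}\log\cos\phi\,d\phi=-\theta\log 2+\frac12\operatorname{Cl}_2(\pi-2\theta)$ already used for Lemma~\ref{lemma:Inbetaf2asymp1}. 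I checked your bookkeeping: the $C(\alpha/2)$ terms cancel, the $\log 2$ contributions collapse to $-\frac{\pi}{2}\log 2$, and $\operatorname{Cl}_2(\frac{3\pi}{2}-\alpha)=-\operatorname{Cl}_2(\frac{\pi}{2}+\alpha)$ by $2\pi$-periodicity and oddness, giving exactly $J_{21}(a)$. Your route is more self-contained (it needs only the simpler table entry and elementary identities, and correctly diagnoses why the $1-2r\cos\theta+r^2$ factorization of Lemma~\ref{lemma:J1asymp} cannot be reused for $a<1$); the paper's is shorter because it leans on the stronger table formula.
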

\begin{proof}
For $a> 0$, we have
\begin{align*}
	J_{2}(\tau_n) 
	& = \int_{0}^{\pi/2} \log(\cos \theta + a) \, d\theta
	+ \int_{0}^{\pi/2}\log\Big(1+\frac{\frac{b(n)}{n}+\mathcal{O}(\frac{1}{n^{2}})}{\cos\theta+a}\Big)\, d\theta
	\\
	& = I_3(a)
	+ \frac{b(n)}{n} I_4(a)
	+ \mathcal{O}\Big(\frac{1}{n^{2}}\Big)
\end{align*}
where
\[
	I_3(a) = \int_{0}^{\pi/2} \log(\cos \theta + a) \, d\theta
	\qquad
	\text{and}
	\qquad
	I_4(a) =  \int_{0}^{\pi/2} \frac{d\theta}{\cos\theta+a}.
\]
Considering $I_3(a)$, we recall \cite[p.308, Formula 36]{L}
\[
	\int_{0}^{\varphi} \log (1+\sec \phi \, \cos \theta) d\theta
	= \operatorname{Cl}_2(\pi + \phi - \varphi)
	+ \operatorname{Cl}_2(\pi - \phi - \varphi)
	- \varphi \, \log (2 \cos \phi).
\]
Therefore, for $a \in (0,1)$, setting $a = \cos \phi$, we have
\[
	I_3(a)
	= \frac{\pi}{2} \log a
	+ \int_0^{\pi/2} \log(1+ \sec \phi \, \cos \theta) \, d\theta
	= J_{21}(a).
\]
For $I_4(a)$, we have
\[
	I_4(a)
	= \frac{1}{\sqrt{1-a^2}}
	\Big( \log \big(1+ \sqrt{\frac{1-a}{1+a}} \tan \frac{\theta}{2} \big)
	- \log \big(1- \sqrt{\frac{1-a}{1+a}} \tan \frac{\theta}{2} \big) \Big) \Big|_{\theta = 0}^{\pi/2}
	= J_{22}(a).
\]
This completes the proof. 
\end{proof}

\noindent
\emph{Proof of Proposition \ref{prop:Inbetaf2asymp}.}
In light of Lemma \ref{lemma:Inbetaf2asymp1}, equations \eqref{eq:2unm1}, \eqref{eq:1m1oun} and \eqref{eq:taun}, and
Lemmas \ref{lemma:J1asymp} and \ref{lemma:J2asymp}, we have
\begin{align*}
	I_n^{\beta}(f_2)
	& = \frac{2}{\pi} \, n^2 \, \log n
	+ \frac{2}{\pi} \Big( \frac{2G}{\pi} + \log \frac{\sqrt{6}}{\pi} - \frac{1}{\pi} \big( J_{11} (\nu) + J_{21} (\frac{\nu-1}{\nu+1}) \big) \Big) n^2 
	\\
	& + \frac{96}{\pi^2\mu} \Big( (\nu+1) J_{12} (\nu) + \frac{2}{\nu+1} J_{22} (\frac{\nu-1}{\nu+1}) \Big) (2-n_0) n
	+ \mathcal{O}(1).
\end{align*}
Simplifying this expression we obtain Proposition \ref{prop:Inbetaf2asymp}. \hfill $\square$

\section{Asymptotic Behavior of $F_n^{\beta}(f_2)$}

In this section we prove the following for the asymptotic behavior of $F_n^{\beta}(f_2)$.
\begin{Prop} \label{prop:Fnbetaf2}
As $n \to \infty$, we have
\[
	F_n^{\beta}(f_2)
	= \frac{2}{\pi} n^2 \log n +  \alpha(n_0) n^2 + \beta(n_0)n + \mathcal{O} \left( 1 \right)
\]
for some constants $\alpha(n_0)$ and $\beta(n_0)$ where $n_0$ is as defined in \eqref{eq:n0}.
\end{Prop}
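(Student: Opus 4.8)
The plan is to carry out the partial fraction decomposition of $f_2$ announced in \S2, and thereby reduce $F_n^{\beta}(f_2)$ to the already-understood quantity $I_n^{\beta}(f_2)$ of Proposition~\ref{prop:Inbetaf2asymp} plus one-dimensional lattice sums that can be evaluated in closed form and then analyzed by Euler--Maclaurin. Setting $\sigma(y)=\sqrt{36+12y^2-y^4}$ and regarding $p_2(x,y)=x^2+y^2-\tfrac1{12}(x^4+y^4)$ as a quadratic in $x^2$ with roots $6\pm\sigma(y)$, partial fractions give
\[
	f_2(x,y)=\frac{24}{\sigma(y)}\,\frac{1}{6+\sigma(y)-x^2}+\frac{24}{\sigma(y)}\,\frac{1}{x^2+\sigma(y)-6}=:g_+(x,y)+g_-(x,y),
\]
where on $[-\tfrac{\pi}{2},\tfrac{\pi}{2}]^2\supseteq\overline{D_n^{\beta}}$ the summand $g_+$ is real-analytic (indeed $\sigma(y)\ge 6$ and $6+\sigma(y)-x^2\ge 12-\tfrac{\pi^2}{4}>0$ there), while $g_-$ is singular only at the origin, where $\sigma(0)=6$ and $\sigma(y)-6=y^2+\mathcal O(y^4)$ so that $g_-\sim\tfrac{4}{x^2+y^2}=f_1$.

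For the regular part I would use that $D_n^{\beta}$ is exactly a union of mesh cells $X_{j,k}$, so the tensor-product midpoint cubature error for the smooth function $g_+$ is $-\tfrac{\Delta_n^2}{24}\sum_{\mathbf{t}_{j,k}\in D_n^{\beta}}(\Delta g_+)(\mathbf{t}_{j,k})+\mathcal O(\Delta_n^2)$, and since $\sum_{\mathbf{t}_{j,k}\in D_n^{\beta}}(\Delta g_+)(\mathbf{t}_{j,k})=\tfrac1{\Delta_n^2}\iint_{D_n^{\beta}}\Delta g_++\mathcal O(1)=\mathcal O(n^2)$ (divergence theorem), this gives $F_n^{\beta}(g_+)=I_n^{\beta}(g_+)+\mathcal O(1)$. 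Here $I_n^{\beta}(g_+)$ is elementary in $x$ (the integral $\int_{-\beta_n}^{\beta_n}(6+\sigma(y)-x^2)^{-1}\,dx$ is an explicit logarithm), and expanding the remaining $y$-integral in powers of $1/n$ about $\beta_n=\tfrac{\pi}{2}(1+\tfrac{2-n_0}{n})$ gives $I_n^{\beta}(g_+)=c_0 n^2+c_1(n_0)n+\mathcal O(1)$ with no $\log n$ (as $g_+$ is bounded), the constants being integrals that need not be elementary; alternatively, since $g_+=f_2+g_-$, one writes $I_n^{\beta}(g_+)=I_n^{\beta}(f_2)+I_n^{\beta}(g_-)$ and quotes Proposition~\ref{prop:Inbetaf2asymp}.

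It then remains to analyze $F_n^{\beta}(g_-)$. The $k=0$ row reduces to $4\sum_{0<|j|\le M}t_j^{-2}=\tfrac{n^2}{3}-\tfrac{8n}{\pi^2}+\mathcal O(1)$ with $M=\tfrac{n-n_0}{4}$. For $k\neq 0$, with $w_k=\tfrac{n}{2\pi}\sqrt{\sigma(t_k)-6}$ and the classical identity $\sum_{j=-\infty}^{\infty}(j^2+w^2)^{-1}=\tfrac{\pi}{w}\coth(\pi w)$, the inner sum over $|j|\le M$ equals $\tfrac{n^2}{4\pi^2}\big(\tfrac{\pi}{w_k}\coth(\pi w_k)-2\sum_{j>M}(j^2+w_k^2)^{-1}\big)$, and summing over $0<|k|\le M$ splits $F_n^{\beta}(g_-)$ into a principal cotangent term $24n\sum_{k=1}^{M}g(t_k)$ with $g(y)=\big(\sigma(y)\sqrt{\sigma(y)-6}\big)^{-1}\sim\tfrac1{6y}$, a correction $24n\sum_{k=1}^{M}(\coth(\pi w_k)-1)g(t_k)$, and a truncation tail $-\tfrac{12n^2}{\pi^2}\sum_{0<|k|\le M}\sigma(t_k)^{-1}\sum_{j>M}(j^2+w_k^2)^{-1}$; together with the $k=0$ row and with $F_n^{\beta}(g_+)$ this is the six-piece decomposition. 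In the principal term, writing $g(y)=\tfrac1{6y}+(g(y)-\tfrac1{6y})$ with bounded remainder, the harmonic-sum expansion $\sum_{k=1}^{M}\tfrac1{6t_k}=\tfrac{n}{12\pi}(\log M+\gamma+\tfrac1{2M}+\dots)$ together with $\log M=\log n-\log 4-\tfrac{n_0}{n}+\mathcal O(n^{-2})$ produces the $\tfrac2\pi n^2\log n$ term and part of the $n^2$ and $n$ coefficients, while Euler--Maclaurin handles $\sum_{k=1}^{M}(g(t_k)-\tfrac1{6t_k})$; the $k=0$ row is explicit; and the $\coth$-correction and truncation tail are each, after the rescaling $k=Ms$, of the form $(\mathrm{const})n^2+(\mathrm{const})(n_0)n+\mathcal O(1)$ with non-elementary constants. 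Collecting everything and inserting Proposition~\ref{prop:Inbetaf2asymp} yields $F_n^{\beta}(f_2)=\tfrac2\pi n^2\log n+\alpha(n_0)n^2+\beta(n_0)n+\mathcal O(1)$, the leading coefficient being forced anyway by Theorem~A.

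The hard part will be the multi-scale bookkeeping in the singular part $F_n^{\beta}(g_-)$. A crude second-derivative bound on $F_n^{\beta}(g_-)-I_n^{\beta}(g_-)$ only yields $\mathcal O(\log n)$, so one must retain enough orders of the closed-form inner sum: both $\coth(\pi w_k)-1$ and the tail $\sum_{j>M}(j^2+w_k^2)^{-1}$ already contribute at order $n^2$, because $w_k\sim|k|$ is $\mathcal O(1)$ in the boundary layer of small $|k|$ near the origin but grows to $\Theta(n)$ as $|k|$ approaches $M$, so the $k$-summation has to be split into this boundary layer and a bulk with matched asymptotics, each pushed to the order that separates the $n^2$ and $n$ coefficients from the $\mathcal O(1)$ remainder. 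Threading the congruence $n\equiv n_0\pmod 4$ — which enters both through $M=(n-n_0)/4$ in the $j$- and $k$-ranges and through $\beta_n=\tfrac{\pi}{2}(1+\tfrac{2-n_0}{n})$ — while keeping all errors uniformly $\mathcal O(1)$ is what produces, and what makes delicate, the precise $n_0$-dependence of $\alpha$ and $\beta$.
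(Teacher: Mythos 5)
Your proposal is sound and arrives at essentially the same six\-/piece decomposition as the paper, but by a genuinely different route, so a comparison is worthwhile. The paper (Lemma~\ref{prop:decompFnbetaf2}) applies the partial fraction decomposition \eqref{eq:parfrac} directly to the discrete summand in the index $j$, producing four simple poles at $\pm\sqrt{B_k}$, $\pm i\sqrt{C_k}$, and then sums over $j$ exactly via the digamma identity \eqref{eq:sum1}, the functional equations \eqref{eq:formula}, and the expansion \eqref{eq:asymp}, all in one uniform computation. You instead split $f_2=g_++g_-$ at the level of functions (partial fractions in $x^2$), which is the same factorization in disguise: one checks $6+\sigma(t_k)=\tfrac{4\pi^2}{n^2}B_k$ and $w_k=\tfrac{n}{2\pi}\sqrt{\sigma(t_k)-6}=\sqrt{C_k}$, so your explicit logarithm from the $x$-integral of $g_+$ plays the role of $R_{n,1}$, your $\coth(\pi w_k)$ split as $1+(\coth(\pi w_k)-1)$ yields $R_{n,4}$ and $R_{n,5}$ (hence the same Dedekind\--eta constant), your truncation tail $\sum_{j>M}(j^2+w_k^2)^{-1}$ is the $\arctan$ sum $R_{n,2}$, and your $k=0$ row is $Q_n$ of \eqref{eq:Rn5Qn}. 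What your route buys: the regular part is dispatched by an elementary second\-/order midpoint\-/cubature estimate (legitimate here because $D_n^{\beta}$ is an exact union of cells and $g_+$ is smooth on it) rather than by digamma asymptotics, and the $\coth$ identity makes the exponentially small correction conceptually transparent; what it costs is that you must separately produce the asymptotics of $I_n^{\beta}(g_+)$ (or of $I_n^{\beta}(g_-)$, if you reduce to Proposition~\ref{prop:Inbetaf2asymp}, which is logically permissible since \S3 is independent) and must not drop the first Euler--Maclaurin correction $\tfrac12(M^2+w_k^2)^{-1}$ hidden in the tail --- this is the paper's $R_{n,3}$ and it contributes at order $n$, so it cannot be absorbed into $\mathcal{O}(1)$. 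Your closing warning about the boundary\-/layer/bulk split is the right instinct, though for the $\coth-1$ correction the rescaling $k=Ms$ is not the relevant picture: that term lives entirely at $k=\mathcal{O}(\log N)$ and contributes $cn^2+\mathcal{O}(1)$ with no $n$ term, exactly as in \S\S4.6. Two trivial slips: $g_+=f_2-g_-$, not $f_2+g_-$, and your ``six pieces'' match the paper's count only after separating the harmonic part $\sum 1/(6t_k)$ of the principal term from its smooth remainder.
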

The plan of the proof is as follows. First we set
\begin{equation} \label{eq:N}
	N = \frac{n-n_0}{4},
\end{equation}
\begin{equation} \label{eq:AkBkCk}
	A_k = \sqrt{1+4\frac{\pi^2k^2}{3n^2} \Big(1-\frac{\pi^2k^2}{3n^2}\Big)},
	\quad
	B_k = \frac{3n^2}{2\pi^2} (1+A_k),
	\quad
	C_k 
	=  \frac{3n^2}{2\pi^2} (A_k-1),
\end{equation}
and, in \S\S4.1, we utilize a partial fraction decomposition along with the functional relations and the asymptotic behavior of the digamma function
\cite{Z} to show the following.
\begin{Lem} \label{prop:decompFnbetaf2}
As $n \to \infty$, we have
\begin{equation} \label{eq:slFn2enew}
	F_n^{\beta}(f_2)
	= \dfrac{2n^2}{\pi^2} \left( R_{n,1} -2 R_{n,2} + R_{n,3} + \pi R_{n,4} + 2\pi R_{n,5} + Q_n \right) + \mathcal{O}(1)
\end{equation}
where
\begin{equation} \label{eq:Rn12}
	R_{n,1}
	= \sum_{k=1}^{N}
	\frac{1}{N} \frac{1}{A_k} \frac{N}{\sqrt{B_k}}
	\log \Big( \frac{1+\frac{N}{\sqrt{B_k}}}{1-\frac{N}{\sqrt{B_k}}} \Big),
	\quad
	R_{n,2}
	= \sum_{k=1}^{N}
	\frac{1}{N} \frac{1}{A_k} \frac{\arctan\big(\frac{\sqrt{C_k}}{N}\big)}{\frac{\sqrt{C_k}}{N}},
\end{equation}
\begin{equation} \label{eq:Rn34}
	R_{n,3}
	= \sum_{k=1}^{N}
	\frac{1}{k^2+N^2-\frac{\pi^2}{3n^2}(k^4+N^4)},
	\quad
	R_{n,4}
	= \sum_{k=1}^N \frac{1}{A_k\sqrt{C_k}},
\end{equation}
\begin{equation} \label{eq:Rn5Qn}
	R_{n,5}
	= \sum_{k=1}^N \frac{1}{A_k\sqrt{C_k}} \, \frac{e^{-2\pi \sqrt{C_k}}}{1-e^{-2\pi \sqrt{C_k}}},
	\quad
	Q_n = \sum_{k = 1}^{N} \dfrac{1}{k^2-\frac{\pi^2}{3n^2}k^4}.
\end{equation}
\end{Lem}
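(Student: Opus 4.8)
The plan is as follows. I would start from the elementary identity
\[
	f_2(\mathbf{t}_{j,k})
	= \frac{n^2}{\pi^2}\cdot\frac{1}{j^2+k^2-\frac{\pi^2}{3n^2}(j^4+k^4)}
\]
(obtained by substituting $\mathbf{t}_{j,k}=(\tfrac{2\pi j}{n},\tfrac{2\pi k}{n})$ into $f_2(\mathbf{x})=4/(x^2+y^2-\tfrac1{12}(x^4+y^4))$), together with the fact, visible from \eqref{eq:Dneps}, that the lattice points in $D_n^\beta$ are exactly those indexed by $|j|,|k|\le N$, $(j,k)\neq(0,0)$, with $N$ as in \eqref{eq:N}. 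Since the summand is even in each of $j,k$, splitting off the coordinate axes gives
\[
	F_n^\beta(f_2)
	= \frac{4n^2}{\pi^2}\,Q_n
	+ \frac{4n^2}{\pi^2}\sum_{j,k=1}^{N}\frac{1}{j^2+k^2-\frac{\pi^2}{3n^2}(j^4+k^4)},
\]
the first term being the axis contribution and the second the ``bulk'' sum to be dissected.

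The new ingredient relative to \cite{BoysalEcevitYildirim20}, which only dealt with the quadratic $p_1$, is to factor the denominator, for fixed $k$, as a quadratic in $j^2$ with roots $j^2=B_k$ and $j^2=-C_k$, where $A_k,B_k,C_k$ are as in \eqref{eq:AkBkCk} and satisfy $B_k+C_k=3n^2A_k/\pi^2$; the choice $\beta=1-\pi^2/20$ guarantees $B_k\ge 3n^2/\pi^2>N^2\ge j^2$ on the whole range, so $B_k-j^2$ never vanishes --- which is precisely why $D_n$ had to be shrunk to $D_n^\beta$. This yields the partial fraction decomposition
\[
	\frac{1}{j^2+k^2-\frac{\pi^2}{3n^2}(j^4+k^4)}
	= \frac{1}{A_k}\Big(\frac{1}{B_k-j^2}+\frac{1}{j^2+C_k}\Big),
\]
so the inner sum over $j=1,\dots,N$ splits into $\sum_j(B_k-j^2)^{-1}$ and $\sum_j(j^2+C_k)^{-1}$, and I would evaluate each in closed form with the digamma function. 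Writing $(B_k-j^2)^{-1}$ over linear factors and telescoping via $\psi(x+1)=\psi(x)+x^{-1}$, the first sum equals $\tfrac1{2\sqrt{B_k}}\big(\psi(\sqrt{B_k}+N)-\psi(\sqrt{B_k}-N)\big)$ plus an explicit rational term; since $\sqrt{B_k}\pm N\asymp n$, the expansion $\psi(x)=\log x-\tfrac1{2x}+\mathcal{O}(x^{-2})$ from \cite{Z} turns the $\psi$-difference into $\log\frac{1+N/\sqrt{B_k}}{1-N/\sqrt{B_k}}+\tfrac{N^2}{2B_k(B_k-N^2)}+\mathcal{O}(n^{-3})$, whose leading piece is the summand of $R_{n,1}$. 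For the second sum I would write $\sum_{j=1}^N=\sum_{j=1}^\infty-\sum_{j>N}$: the full series equals $\tfrac{\pi\coth(\pi\sqrt{C_k})}{2\sqrt{C_k}}-\tfrac1{2C_k}$ (via the reflection relation for $\psi$), and the identity $\coth t=1+\tfrac{2e^{-2t}}{1-e^{-2t}}$ separates its main part $\tfrac\pi{2\sqrt{C_k}}$, which gives $R_{n,4}$, from the exponentially small part, which gives $R_{n,5}$; the tail has the Euler--Maclaurin / digamma expansion $\tfrac{\arctan(\sqrt{C_k}/N)}{\sqrt{C_k}}-\tfrac1{2(N^2+C_k)}+\mathcal{O}(n^{-3})$, the arctangent piece giving $-R_{n,2}$.

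What remains is to account for the rational corrections, and this is the delicate step. Collecting them, the bulk sum equals $\tfrac12R_{n,1}-R_{n,2}+\tfrac\pi2R_{n,4}+\pi R_{n,5}$ plus $\sum_{k=1}^N\tfrac1{A_k}\big(\tfrac{N^2}{2B_k(B_k-N^2)}-\tfrac1{2C_k}+\tfrac1{2(N^2+C_k)}\big)$ plus $\mathcal{O}(n^{-2})$ (the last being the accumulation of the $\mathcal{O}(n^{-3})$ per-$k$ errors over the $\asymp n$ values of $k$, using that $1/A_k$ is bounded). These surviving rational pieces must not be discarded: each is only $\mathcal{O}(n^{-2})$ per $k$, hence $\mathcal{O}(n^{-1})$ after summation, which the $\tfrac{4n^2}{\pi^2}$ prefactor would inflate to $\mathcal{O}(n)$. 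The point is that, by $\tfrac1{B_k-N^2}-\tfrac1{B_k}=\tfrac{N^2}{B_k(B_k-N^2)}$ and $\tfrac1{C_k}-\tfrac1{N^2+C_k}=\tfrac{N^2}{C_k(N^2+C_k)}$, they rearrange into $\tfrac12\sum_k\tfrac1{A_k}\big[\big(\tfrac1{B_k-N^2}+\tfrac1{N^2+C_k}\big)-\big(\tfrac1{B_k}+\tfrac1{C_k}\big)\big]=\tfrac12\big(R_{n,3}-Q_n\big)$, since $R_{n,3}=\sum_k\tfrac1{A_k}\big(\tfrac1{B_k-N^2}+\tfrac1{N^2+C_k}\big)$ and $Q_n=\sum_k\tfrac1{A_k}\big(\tfrac1{B_k}+\tfrac1{C_k}\big)$ are the ``$j=N$'' and ``$j=0$'' specializations of the partial fraction. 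Substituting this, multiplying by $\tfrac{4n^2}{\pi^2}$, and adding the axis term $\tfrac{4n^2}{\pi^2}Q_n$ gives $\tfrac{2n^2}{\pi^2}\big(R_{n,1}-2R_{n,2}+R_{n,3}+\pi R_{n,4}+2\pi R_{n,5}+Q_n\big)+\mathcal{O}(1)$, which is \eqref{eq:slFn2enew}. The main obstacle is thus not any individual estimate but the bookkeeping: carrying every rational remainder --- in particular the Euler--Maclaurin boundary term $-\tfrac1{2(N^2+C_k)}$, which is essential rather than negligible --- all the way through to the final telescoping into $R_{n,3}$ and $Q_n$.
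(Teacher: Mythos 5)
Your proposal is correct and follows essentially the same route as the paper: symmetry reduction to axis plus bulk, the partial fraction decomposition in $j$ with $A_k,B_k,C_k$, closed-form evaluation of the inner sums via the digamma function, and the crucial bookkeeping step in which the surviving $\mathcal{O}(n^{-2})$-per-$k$ rational remainders telescope into $\tfrac12(R_{n,3}-Q_n)$ through the $j=N$ and $j=0$ specializations of the partial fraction identity. The only (immaterial) difference is that you evaluate $\sum_{j=1}^N(j^2+C_k)^{-1}$ as the full series (via the $\coth$ expansion) minus an Euler--Maclaurin tail, whereas the paper keeps the finite sum and applies the digamma reflection formula at $1\pm i\sqrt{C_k}$ together with the asymptotic expansion at $N\pm i\sqrt{C_k}$, producing the identical cotangent, arctangent, and rational terms.
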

In \S\S4.2 we estimate the summands in $R_{n,j}$. In \S\S4.3-4.7 we use these estimates to study the asymptotic behavior of $R_{n,j}$ and $Q_n$
and prove that
\begin{Lem} \label{lemma:asymptotics}
As $n \to \infty$, we have
\begin{equation} \label{eq:Rn12asymp}
	R_{n,\ell}
	= \alpha_{\ell}
	+ \beta_{\ell}(n_0) \frac{1}{n}
	+ \gamma_{\ell}(n_0) \frac{1}{n^2}
	+ \mathcal{O} \Big( \frac{1}{n^3} \Big),
	\qquad
	\ell =1,2,
\end{equation}
\begin{align} \label{eq:Rn3asymp}
	R_{n,3} = \beta_{3} \frac{1}{n} + \mathcal{O} \Big( \frac{1}{n^2} \Big),
\end{align}
\begin{equation} \label{eq:Rn4asymp}
	R_{n,4}
	= \log n
	+ \alpha_4(n_0)
	+ \beta_4(n_0) \frac{1}{n}
	+ \gamma_4(n_0) \frac{1}{n^2}
	+ \mathcal{O} \Big( \frac{\log n}{n^3} \Big),
\end{equation}
\begin{equation} \label{eq:Rn5asymp}
	R_{n,5}
	= \log \Big( \frac{2\pi^\frac{3}{4}}{e^{\frac{\pi}{12}} \Gamma(\frac{1}{4})}\Big)
	+ \mathcal{O} \Big( \frac{1}{n^2} \Big),
\end{equation}
\begin{equation} \label{eq:Qnasymp}
	Q_{n} = \frac{\pi^2}{6}
	+ \Big( \frac{\pi}{2\sqrt{3}} \log\Big( \frac{4\sqrt{3}+\pi}{4\sqrt{3}-\pi}\Big) - 4 \Big) \frac{1}{n}
	+ \mathcal{O} \Big( \frac{1}{n^2}\Big),
\end{equation}
for some constants $\alpha_{\ell}$, $\beta_{\ell}$, and $\gamma_{\ell}$ some of which depend on $n_0$.
\end{Lem}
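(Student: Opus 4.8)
\textbf{Proof proposal for Lemma~\ref{lemma:asymptotics}.}

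The plan is to treat each of the six expansions separately, since each $R_{n,\ell}$ (and $Q_n$) has a genuinely different analytic structure, but in every case the underlying mechanism is the same: the sum over $k=1,\dots,N$ is a Riemann-type sum for a smooth function of $k/n$ (or $k/N$), and we need to extract not just the leading Riemann-integral term but also the $1/n$ and $1/n^2$ corrections coming from the Euler--Maclaurin formula. The essential preliminary, to be established in \S\S4.2, is a uniform expansion of the summands in powers of $1/n$ with $k/n$ held in a fixed compact range; in particular $A_k = 1 + \tfrac{2\pi^2 k^2}{3n^2} + \mathcal{O}(k^4/n^4)$, $B_k = \tfrac{3n^2}{\pi^2} + \mathcal{O}(k^2)$, and $C_k = \tfrac{\pi^2 k^2}{2\cdot 3}\cdot\frac{?}{}$ — more precisely $C_k \sim \tfrac{k^2}{2}\cdot\frac{?}{}$; what matters is that $C_k/N^2$ and $N/\sqrt{B_k}$ both tend to honest constants, so that the summands of $R_{n,1}$, $R_{n,2}$, $R_{n,4}$, $R_{n,5}$ are all of the form (smooth function of $k/N$) plus lower-order terms.

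For \eqref{eq:Rn12asymp}: write $R_{n,\ell} = \frac1N\sum_{k=1}^N g_\ell(k/N) + (\text{corrections})$ where $g_1$ and $g_2$ are the evident smooth limiting integrands on $[0,1]$; apply Euler--Maclaurin to the finite sum to produce $\int_0^1 g_\ell + \tfrac{1}{2N}(g_\ell(1)-g_\ell(0)) + \tfrac{1}{12N^2}(g_\ell'(1)-g_\ell'(0)) + \mathcal{O}(N^{-3})$, and then re-expand $\tfrac1N = \tfrac{4}{n-n_0} = \tfrac4n + \tfrac{4n_0}{n^2} + \mathcal{O}(n^{-3})$ together with the $1/n$-dependence already present in the summands; collecting powers of $1/n$ gives the stated $\alpha_\ell$, $\beta_\ell(n_0)$, $\gamma_\ell(n_0)$. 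For \eqref{eq:Rn3asymp}, $R_{n,3} = \tfrac1N\sum \tfrac1{N}\cdot\frac{1}{(k/N)^2 + 1 + \mathcal{O}(1/n^2)}$ carries an extra factor $1/N$ in front, so the Riemann sum is $\tfrac{1}{N}\cdot\mathcal{O}(1)$; the leading term is $\tfrac{1}{N}\int_0^1 \tfrac{dt}{t^2+1} = \tfrac{\pi}{4N} = \tfrac{\pi}{n} + \mathcal{O}(1/n^2)$, hence $\beta_3 = \pi$ — though the precise constant must be checked against the actual normalization of $R_{n,3}$. For \eqref{eq:Rn4asymp}, $R_{n,4} = \sum_{k=1}^N \tfrac{1}{A_k\sqrt{C_k}}$ behaves like $\sum 1/k$ to leading order (since $\sqrt{C_k}\sim c\,k$ and $A_k\to 1$), which produces the $\log N = \log n + \log(1/4) + \mathcal{O}(1/n)$ term; the subleading structure is obtained by writing $\tfrac{1}{A_k\sqrt{C_k}} = \tfrac1k + (\tfrac{1}{A_k\sqrt{C_k}} - \tfrac1k)$, summing the first part via $\sum_{k=1}^N \tfrac1k = \log N + \gamma + \tfrac{1}{2N} - \tfrac{1}{12N^2} + \mathcal{O}(N^{-3})$ and the second (which is an absolutely convergent series plus a tail) by Euler--Maclaurin, then re-expanding in $1/n$. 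For \eqref{eq:Rn5asymp}, the factor $\tfrac{e^{-2\pi\sqrt{C_k}}}{1-e^{-2\pi\sqrt{C_k}}} = \sum_{r\ge1} e^{-2\pi r\sqrt{C_k}}$ decays exponentially in $k$, so only finitely many $k$ contribute up to $\mathcal{O}(n^{-2})$; the sum converges rapidly to a constant which one identifies with $\log\!\big(2\pi^{3/4} e^{-\pi/12}/\Gamma(\tfrac14)\big)$ via the product/series representation linking $\sum_{r\ge1} e^{-\pi r^2}$-type sums (or $\prod(1-e^{-2\pi r})$) to the Dedekind eta / Gamma values at $\tfrac14$ — this is the one step where a classical special-value identity has to be invoked rather than computed. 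Finally \eqref{eq:Qnasymp}: $Q_n = \sum_{k=1}^N \tfrac{1}{k^2 - \frac{\pi^2}{3n^2}k^4} = \sum_{k=1}^N \tfrac1{k^2}\cdot\frac{1}{1 - \frac{\pi^2 k^2}{3n^2}} = \sum_{k=1}^N \tfrac1{k^2} + \frac{\pi^2}{3n^2}\sum_{k=1}^N 1 + \mathcal{O}(\sum k^2/n^4)$; using $\sum_{k=1}^\infty \tfrac1{k^2} = \tfrac{\pi^2}{6}$ and $\sum_{k>N}\tfrac1{k^2} = \tfrac1N + \mathcal{O}(N^{-2})$, together with a partial-fraction evaluation $\tfrac{1}{k^2 - ck^4} = \tfrac1{k^2} + \tfrac{c}{1-ck^2}$ whose second piece sums (as $N\to\infty$, $c\to0$ with $cN^2$ fixed) to the $\arctan$/$\log$-type constant in the stated formula, one collects the $1/n$ coefficient $\tfrac{\pi}{2\sqrt3}\log\frac{4\sqrt3+\pi}{4\sqrt3-\pi} - 4$.

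The main obstacle is bookkeeping rather than any single hard idea: one must carry three orders of the $1/n$-expansion through a Riemann-sum-to-integral passage in which \emph{both} the summand and the number of terms $N = (n-n_0)/4$ depend on $n$, so that $n_0$ enters at orders $1/n$ and $1/n^2$ and must be tracked consistently. Concretely, the delicate point is justifying uniform error control in the Euler--Maclaurin step near $k=0$ (where several integrands have integrable singularities or where $\sqrt{C_k}$ vanishes), and ensuring the $\mathcal{O}(k^4/n^4)$ remainders in the summand expansions, after summation over $k\le N\sim n/4$, genuinely contribute only $\mathcal{O}(1/n^2)$ (respectively $\mathcal{O}(\log n/n^3)$ for $R_{n,4}$, because of the harmonic factor $1/k$). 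Proving the $R_{n,5}$ constant equals the claimed $\Gamma(\tfrac14)$-expression is the only place a non-routine classical identity is needed, and I would isolate that as a standalone computation.
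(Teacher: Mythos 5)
Your overall strategy --- expand the summands in powers of $1/n$ uniformly in $k/N$, apply the Euler--Maclaurin formula (Theorem C) to the resulting Riemann sums, extract the $1/k$ singularity in $R_{n,4}$ before doing so, truncate $R_{n,5}$ by exponential decay and invoke $\eta(i)=\Gamma(\tfrac14)/(2\pi^{3/4})$ --- is exactly the paper's for $R_{n,1}$, $R_{n,2}$, $R_{n,4}$ and $R_{n,5}$, and your handling of $N=(n-n_0)/4$ versus $n$ matches the paper's $1/N_0$ bookkeeping. The genuine problem is a quantitative error you make twice: treating $\frac{\pi^2}{3n^2}k^4$ as negligible against $k^2$. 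Since $k$ runs up to $N\sim n/4$, the ratio $\frac{\pi^2k^2}{3n^2}$ is not $\mathcal{O}(1/n^2)$; it climbs to $\frac{\pi^2}{48}\approx 0.21$. For $R_{n,3}$ this means that, after factoring $N^2$ out of the denominator, the integrand is $\bigl(t^2+1-\frac{\pi^2}{48}(t^4+1)\bigr)^{-1}+\mathcal{O}(1/n)$ with $t=k/N$, not $(t^2+1)^{-1}+\mathcal{O}(1/n^2)$; your value $\beta_3=\pi=4\int_0^1\frac{dt}{1+t^2}$ is therefore wrong (the paper obtains $\beta_3=2(\alpha_{N,8}-2\alpha_{N,9}+\pi\alpha_{N,11})$, numerically about $3.6$). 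Since the lemma only asserts the existence of $\beta_3$, your Riemann-sum route still proves \eqref{eq:Rn3asymp} once the quartic term is retained, but the constant you report is not the right one.

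The same slip is fatal for \eqref{eq:Qnasymp}, where the constant is part of the statement. In $Q_n=\sum_{k=1}^{N}\frac{1}{k^2}\,\frac{1}{1-\pi^2k^2/(3n^2)}$ your two-term expansion carries a remainder $\mathcal{O}\bigl(\sum_{k\le N}k^2/n^4\bigr)=\mathcal{O}(N^3/n^4)=\mathcal{O}(1/n)$ --- the same order as the coefficient being computed --- so it would deliver the incorrect $1/n$ coefficient $\frac{\pi^2}{12}-4$ in place of $\frac{\pi}{2\sqrt3}\log\frac{4\sqrt3+\pi}{4\sqrt3-\pi}-4$: every term of the geometric series in $\pi^2k^2/(3n^2)$ contributes at order $1/n$. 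Your fallback, the exact partial fraction $\frac{1}{k^2-ck^4}=\frac{1}{k^2}+\frac{c}{1-ck^2}$ with the second piece summed in closed form, is the correct route and is essentially what the paper does: it evaluates $\sum_k\bigl(\frac{1}{k+\sqrt3n/\pi}-\frac{1}{k-\sqrt3n/\pi}\bigr)$ via the digamma identities \eqref{eq:sum1} and \eqref{eq:formula} and the expansion \eqref{eq:asymp}, arriving at $\log\frac{4\sqrt3+\pi}{4\sqrt3-\pi}$, and it treats $R_{n,3}$ by the same digamma mechanism. So the write-up should drop the Taylor-in-$c$ heuristic entirely and commit to the partial-fraction evaluation (by digamma asymptotics or by a careful Riemann-sum limit) for both $R_{n,3}$ and $Q_n$; with that repair, and the quantitative estimate $\sqrt{C_k}=k+\mathcal{O}(k^3/n^2)$ made explicit in the $R_{n,5}$ truncation, the argument goes through.
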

The use of \eqref{eq:Rn12asymp}--\eqref{eq:Qnasymp} in \eqref{eq:slFn2enew} proves Proposition \ref{prop:Fnbetaf2}.
In principle, the constants in \eqref{eq:Rn12asymp} and \eqref{eq:Rn4asymp} can be computed explicitly. As will be apparent
from the derivations that follow, however, this would require the evaluation of some nontrivial integrals. 
The constant $\beta_3$ in \eqref{eq:Rn3asymp} is given explicitly at the end of \S\S4.4.

Concerning the proof of Lemma \ref{lemma:asymptotics}, let us mention that the study of the sums $R_{n,1}$ and $R_{n,2}$ in \S\S4.3
leading into the asymptotic relation \eqref{eq:Rn12asymp} is based on further decompositions
of these sums and utilization of the following version of the Euler-Maclaurin formula due to Lampret \cite{La}.
\vskip.2cm
\noindent
{\bf Theorem C.}\;\;
\emph{For any $N,p \in \mathbb{N}$ and any function $g\in C^{p}[0,1]$,
\begin{align*}
	\sum_{k=1}^{N} \frac{1}{N}  g \big( \frac{k}{N} \big) 
	= \int_{0}^{1}g(x)\, dx + \frac{1}{N} (g(1)-g(0))
	+ \sum_{\ell=1}^{p} \frac{1}{N^{\ell}} \frac{B_{\ell}}{{\ell}!} \big[g^{({\ell}-1)}(x)\big]_{0}^{1} + r_{p,N}(g),
\end{align*}
where $B_{\ell}$ are the Bernoulli numbers $(B_1 = -\frac{1}{2}$, $B_2 = \frac{1}{6}$, $B_3=0$, $B_4=-\frac{1}{30}, \ldots)$, and
\[
	r_{p,N}(g) = -\frac{1}{N^p}\frac{1}{p!} \int_{0}^{1} B_{p} (-Nx) \, g^{(p)}(x)\, dx
\]
with $B_{p}(x)$ being the $p$-th Bernoulli polynomial in $[0,1)$ extended to all  $x \in \mathbb{R}$ via
$B_{p}(x+1)=B_{p}(x)$.} \vspace{0.1cm}

\noindent
The study of $R_{n,4}$ in \S\S4.5 resulting in the asymptotic relation \eqref{eq:Rn4asymp} is similar but additionally involves a
singularity extraction technique. As for $R_{n,5}$, a different and more delicate analytical approach is utilized in \S\S4.6 to prove
the relation \eqref{eq:Rn5asymp}. Concerning $R_{n,3}$ and $Q_n$, in \S\S4.4 and \S\S4.7, we employ partial fraction decompositions
along with the functional
relations and the asymptotic behavior of the digamma function and prove relations \eqref{eq:Rn3asymp} and \eqref{eq:Qnasymp}.

\subsection{Proof of Lemma \ref{prop:decompFnbetaf2}}
With $N$ as defined in \eqref{eq:N}, we see from \eqref{eq:Dneps} that $\mathbf{t}_{j,k} \in D_n^{\beta}$
if and only if $ |j|,|k| \le N$ and $(j,k) \ne (0,0)$ (see \eqref{eq:explainthis}).
Accordingly
\begin{align} \label{eq:slFn2e}
	F_n^{\beta} (f_2)
	& = \sum_{\mathbf{t}_{j,k} \in D_n^{\beta}} f_2(\mathbf{t}_{j,k}) 
	= \dfrac{n^2}{\pi^2} \sum_{|j|,|k| \le N \atop (j,k) \ne (0,0)} \dfrac{1}{j^2+k^2-\frac{\pi^2}{3n^2}(j^4+k^4)}
	\\
	& = \dfrac{4n^2}{\pi^2}
	\Big(
		\sum_{k = 1}^{N} \dfrac{1}{k^2-\frac{\pi^2}{3n^2}k^4}
		+ \sum_{j,k = 1}^{N} \dfrac{1}{j^2+k^2-\frac{\pi^2}{3n^2}(j^4+k^4)}
	\Big)
	\nonumber
	\\
	& = \dfrac{4n^2}{\pi^2} \left( Q_n + R_n \right),
	\quad \text{say (see \eqref{eq:Rn5Qn})}.
	\nonumber	
\end{align}
Considering the sum $R_n$, we use the partial fraction decomposition
\begin{equation} \label{eq:parfrac}
	\frac{1}{x^2-ax^4+b}
	= \frac{1}{2A\sqrt{B}} \big( \frac{1}{x+\sqrt{B}} - \frac{1}{x-\sqrt{B}} \big)
	+ \frac{i}{2A\sqrt{C}} \big( \frac{1}{x+i\sqrt{C}} - \frac{1}{x-i\sqrt{C}} \big)
\end{equation}
(valid for $a,b>0$ with $A = \sqrt{1+4ab}$, $B = \dfrac{1+A}{2a}$, and $C = \dfrac{A-1}{2a} = \dfrac{2b}{1+A}$)
for the parameters $x=j$, $a = \dfrac{\pi^2}{3n^2}$, $b=k^2-\dfrac{\pi^2}{3n^2}k^4$,
and the relation \cite[Formula 6.3.6]{AS}
\begin{equation} \label{eq:sum1}
	\sum_{j=1}^{N} \frac{1}{j+a} = \psi(N+1+a)-\psi(1+a),
	\qquad
	(a \notin -\mathbb{N}),
\end{equation}
for the digamma function to evaluate the sum with respect to $j$ to have
\begin{align*}
	R_n
	& = \sum_{k=1}^{N}
	\frac{\psi(N+1+\sqrt{B_k}) - \psi(1+\sqrt{B_k}) - \psi(N+1-\sqrt{B_k}) +  \psi(1-\sqrt{B_k})}{2A_k\sqrt{B_k}}
	\\
	& + i \sum_{k=1}^{N}
	\frac{\psi(N+1+i\sqrt{C_k}) - \psi(1+i\sqrt{C_k}) - \psi(N+1-i\sqrt{C_k}) + \psi(1-i\sqrt{C_k})}{2A_k\sqrt{C_k}}
\end{align*}
where $A_k$, $B_k$, and $C_k$ are as defined in \eqref{eq:AkBkCk}.
Next we use the functional relations \cite[Formulas 6.3.5 \& 6.3.7]{AS}
\begin{equation} \label{eq:formula}
	\psi(1+z) = \psi(z) + \frac{1}{z}
	\quad
	\text{and}
	\quad
	\psi(1-z) = \psi(z) + \pi \cot \pi z
\end{equation}
for the digamma function to write
\begin{align} \label{eq:relations}
	& \psi(N+1+\sqrt{B_k}) = \psi(\sqrt{B_k}+N) + \frac{1}{\sqrt{B_k}+N},
	\\
	& \psi(N+1-\sqrt{B_k}) = \psi(\sqrt{B_k}-N) + \pi \cot \pi (\sqrt{B_k}-N),
	\nonumber
	\\
	& \psi(1+\sqrt{B_k}) - \psi(1-\sqrt{B_k}) = \frac{1}{\sqrt{B_k}} - \pi \cot \pi \sqrt{B_k},
	\nonumber
	\\
	& \psi(N+1+i\sqrt{C_k}) = \psi(N+i\sqrt{C_k}) + \frac{1}{N+i\sqrt{C_k}},
	\nonumber
	\\
	& \psi(N+1-i\sqrt{C_k}) = \psi(N-i\sqrt{C_k}) + \frac{1}{N-i\sqrt{C_k}},
	\nonumber
	\\
	& \psi(1+i\sqrt{C_k}) - \psi(1-i\sqrt{C_k}) = \frac{1}{i\sqrt{C_k}} - \pi \cot \pi i \sqrt{C_k},
	\nonumber
\end{align}
and use the fact that the cotangent function is $\pi$-periodic to have
\begin{align} \label{eq:useagain}
	R_n
	& = \sum_{k=1}^{N}
	\frac{1}{2A_k\sqrt{B_k}} 
	\Big(
		\psi(\sqrt{B_k}+N) - \psi(\sqrt{B_k}-N) + \frac{1}{\sqrt{B_k}+N} - \frac{1}{\sqrt{B_k}}
	\Big)
	\\
	& + i \sum_{k=1}^{N}
	\frac{1}{2A_k\sqrt{C_k}} 
	\Big(
		\psi(N+i\sqrt{C_k}) - \psi(N-i\sqrt{C_k})
	\nonumber	
	\\	
	& \hspace{5cm} - \frac{2i\sqrt{C_k}}{C_k+N^2} - \frac{1}{i\sqrt{C_k}} + \pi \cot \pi i \sqrt{C_k}
	\Big).
	\nonumber
\end{align}
For $1 \le k \le N$, the estimates
\begin{equation} \label{eq:ABCestimate}
	1 < A_k < \frac{13}{10},
	\
	\frac{3 n^2}{\pi^2}  < B_k < \frac{7 n^2}{2\pi^2} ,
	\
	\frac{69 k^2}{100}  < C_k < k^2,
	\
	\frac{3n}{10} < \sqrt{B_k} \pm N < \frac{17n}{20}
\end{equation}
are easily shown to hold, and they allow us to employ the asymptotic expansion
\cite[Formula 6.3.18]{AS}
\begin{equation} \label{eq:asymp}
	\psi(z) = \log z - \frac{1}{2z} - \frac{1}{12z^2} + \mathcal{O} \Big( \frac{1}{|z|^4}\Big),
	\quad
	\text{as } |z| \to \infty \text{ with } |\operatorname{arg} z| < \pi - \delta,
\end{equation}
in \eqref{eq:useagain} to deduce
\begin{multline*}
	R_n
	= \sum_{k=1}^{N}
	\frac{1}{2A_k\sqrt{B_k}} 
	\Big(
		\log\big(\frac{\sqrt{B_k}+N}{\sqrt{B_k}-N}\big) + \frac{\sqrt{B_k}}{B_k-N^2} - \frac{1}{\sqrt{B_k}}
	\Big)
	\\
	+ i \sum_{k=1}^{N}
	\frac{1}{2A_k\sqrt{C_k}} 
	\Big(
		2i \arctan\frac{\sqrt{C_k}}{N} - \frac{i\sqrt{C_k}}{C_k+N^2} - \frac{1}{i\sqrt{C_k}} + \pi \cot \pi i \sqrt{C_k}  
	\Big) \!
	+ \! \mathcal{O}\Big( \frac{1}{n^2} \Big)
\end{multline*}
which we rewrite as
\begin{align} \label{eq:Hndecomppre}
	R_n
	& = \dfrac{1}{2} \sum_{k=1}^{N}
	\frac{1}{N} \frac{1}{A_k} \frac{N}{\sqrt{B_k}}
	\log \Big( \frac{1+\frac{N}{\sqrt{B_k}}}{1-\frac{N}{\sqrt{B_k}}} \Big)
	- \sum_{k=1}^{N}
	\frac{1}{N} \frac{1}{A_k} \frac{\arctan\big(\frac{\sqrt{C_k}}{N}\big)}{\frac{\sqrt{C_k}}{N}} 
	\\
	& + \dfrac{1}{2} \sum_{k=1}^{N}
	\frac{1}{A_k} 
	\Big( \frac{1}{B_k-N^2} + \frac{1}{C_k+N^2} \Big)
	\nonumber
	- \dfrac{1}{2} \sum_{k=1}^{N} \frac{1}{A_k} \Big( \frac{1}{B_k} + \frac{1}{C_k} \Big)
	\\
	& + \dfrac{1}{2} \sum_{k=1}^{N} \frac{\pi i \cot \pi i \sqrt{C_k}}{A_k\sqrt{C_k}}
	+ \mathcal{O}\Big( \frac{1}{n^2} \Big).
	\nonumber
\end{align}
Next, in \eqref{eq:Hndecomppre}, we use \eqref{eq:AkBkCk} in the third and fourth sums and the definition
of the cotangent function in the last sum to obtain
\begin{align} \label{eq:Hndecomp}
	R_n
	& = \dfrac{1}{2} \sum_{k=1}^{N}
	\frac{1}{N} \frac{1}{A_k} \frac{N}{\sqrt{B_k}}
	\log \Big( \frac{1+\frac{N}{\sqrt{B_k}}}{1-\frac{N}{\sqrt{B_k}}} \Big)
	- \sum_{k=1}^{N}
	\frac{1}{N} \frac{1}{A_k} \frac{\arctan\big(\frac{\sqrt{C_k}}{N}\big)}{\frac{\sqrt{C_k}}{N}} 
	\nonumber
	\\
	& + \dfrac{1}{2} \sum_{k=1}^{N}
	\frac{1}{k^2+N^2-\frac{\pi^2}{3n^2}(k^4+N^4)}
	- \dfrac{1}{2} Q_n
	\nonumber
	\\
	& + \frac{\pi}{2} \sum_{k=1}^N \frac{1}{A_k\sqrt{C_k}} 
	+ \pi \sum_{k=1}^N \frac{1}{A_k\sqrt{C_k}} \, \frac{e^{-2\pi \sqrt{C_k}}}{1-e^{-2\pi \sqrt{C_k}}}
	+ \mathcal{O}\Big( \frac{1}{n^2} \Big)
	\nonumber
	\\
	& = \dfrac{1}{2} R_{n,1} - R_{n,2} + \dfrac{1}{2} R_{n,3} -\frac{1}{2} Q_{n} + \dfrac{\pi}{2} R_{n,4} + \pi R_{n,5}
	+ \mathcal{O}\Big( \frac{1}{n^2} \Big)
\end{align}
(see \eqref{eq:Rn12}--\eqref{eq:Rn5Qn}). Finally, we use \eqref{eq:Hndecomp} in \eqref{eq:slFn2e} to get \eqref{eq:slFn2enew},
and this completes the proof of Lemma \ref{prop:decompFnbetaf2}.

\subsection{Approximation of the summands}
Here we study the asymptotic behavior of the summands in $R_{n,j}$ for $j=1,2,4$. To this end, we introduce the notation
\begin{equation} \label{eq:ak}
	a_k = \frac{\pi}{4\sqrt{3}} \frac{k}{N},
\end{equation}
\begin{equation} \label{eq:1overN0}
	\frac{1}{N_0} =
	\left\{
		\begin{array}{cl}
			\frac{n_0}{4}\frac{1}{N}, & n_0 \in \{1,2,3\}, \\
			0, & n_0 = 0,
		\end{array}
	\right.
\end{equation}
and use \eqref{eq:N} in \eqref{eq:AkBkCk} to write
\[
	A_k 
	= \sqrt{1+ \frac{4a_k^2}{(1+\frac{1}{N_0})^2}
	\Big( 1-\frac{a_k^2}{(1+ \frac{1}{N_0})^2} \Big)}
\]
(note that $0 <a_k < \frac{1}{2}$). For a constant $0<a<\frac{1}{2}$,
\[
	\Psi(x) = \sqrt{1+\frac{4a^2}{(1+x)^2} \Big( 1-\frac{a^2}{(1+x)^2} \Big)}
\]
is smooth on the interval $(-\frac{1}{2},\infty)$. 
We set $$A = \sqrt{1+4a^2(1-a^2)}.$$
By Taylor's theorem with remainder, we have
\begin{align*}
	\frac{1}{\Psi(x)}
	& = \frac{1}{A} \Big\{ 1- \frac{4a^2(2a^2-1)}{A^2} \, x + \frac{2a^2(8a^6+4a^4+10a^2-3)}{A^4} \, x^2 + \mathcal{O}(x^3) \Big\},
	\\
	\sqrt{1+\Psi(x)}
	& = \sqrt{1+A}
	\Big\{ 1 + \frac{2a^2(2a^2-1)}{A(1+A)} \, x
	\\
	& \hspace{0.4cm}
	+ \frac{a^2}{A^2(1+A)} \Big( \frac{(2a^2-3)(12a^4-1)}{A} - \frac{2a^2(2a^2-1)^2}{1+A} \Big) \, x^2 + \mathcal{O}(x^3) \Big\},
	\\
	\frac{1}{\sqrt{1+\Psi(x)}}
	& = \frac{1}{\sqrt{1+A}}
	\Big\{ 1 - \frac{2a^2(2a^2-1)}{A(1+A)} \, x
	\\
	& \hspace{0.37cm}
	+ \frac{a^2}{A^2(1+A)} \Big( \frac{6a^2(2a^2-1)^2}{1+A} - \frac{(2a^2-3)(12a^4-1)}{A} \Big) \, x^2 + \mathcal{O}(x^3) \Big\}
\end{align*}
which are valid on $(- \frac{1}{2},\infty)$. Setting
\[
	\mathcal{A}_k
	= \sqrt{1+4 a_k^2 \left( 1- a_k^2 \right)},
\]
and implementing these Taylor approximations for $a = a_k$ and $x = \frac{1}{N_0}$, we obtain
\begin{equation} \label{eq:1overAk}
	\frac{1}{A_k}
	= \alpha_{k,1}
	\Big\{ 1 + \beta_{k,1} \frac{1}{N_0} + \gamma_{k,1} \frac{1}{N_0^2} + \mathcal{O}\Big( \frac{1}{N_0^3} \Big) \Big\}
\end{equation}
with
\[
	\alpha_{k,1} = \frac{1}{\mathcal{A}_k},
	\quad
	\beta_{k,1} = - \frac{4a_k^2(2a_k^2-1)}{\mathcal{A}_k^2},
	\quad
	\gamma_{k,1} = \frac{2a_k^2(8a_k^6+4a_k^4+10a_k^2-3)}{\mathcal{A}_k^4},
\]
\begin{align*}
	\sqrt{1 + A_k}
	& = \alpha_{k,2}
	\Big\{ 1 + \beta_{k,2} \frac{1}{N_0} + \gamma_{k,2} \frac{1}{N_0^2} + \mathcal{O}\Big( \frac{1}{N_0^3} \Big) \Big\}
\end{align*}
with
\[
	\alpha_{k,2} = \sqrt{1 + \mathcal{A}_k},
	\quad
	\beta_{k,2} = \frac{2a_k^2(2a_k^2-1)}{\mathcal{A}_k(1+\mathcal{A}_k)},
\]
\[
	\gamma_{k,2} = \frac{a_k^2}{\mathcal{A}_k^2(1+\mathcal{A}_k)}
	\Big( \frac{(2a_k^2-3)(12a_k^4-1)}{\mathcal{A}_k} - \frac{2a_k^2(2a_k^2-1)^2}{1+\mathcal{A}_k} \Big),
\]
and
\begin{align} \label{eq:1pAsqrtres}
	\frac{1}{\sqrt{1 + A_k}}
	& = \alpha_{k,3}
	\Big\{ 1 + \beta_{k,3} \frac{1}{N_0} + \gamma_{k,3} \frac{1}{N_0^2} + \mathcal{O}\Big( \frac{1}{N_0^3} \Big) \Big\}
\end{align}
with
\[
	\alpha_{k,3} = \frac{1}{\sqrt{1 + \mathcal{A}_k}},
	\quad
	\beta_{k,3} = - \beta_{k,2},
\]
\[
	\gamma_{k,3} = \frac{a_k^2}{\mathcal{A}_k^2(1+\mathcal{A}_k)}
	\Big( \frac{6a_k^2(2a_k^2-1)^2}{1+\mathcal{A}_k} - \frac{(2a_k^2-3)(12a_k^4-1)}{\mathcal{A}_k} \Big).
\]
Since 
\[
	\frac{1}{1 + \frac{1}{N_0}} = 1 - \frac{1}{N_0} + \frac{1}{N_0^2} + \mathcal{O}\Big( \frac{1}{N_0^3} \Big),
\]
\eqref{eq:1pAsqrtres} entails
\begin{align} \label{eq:NoversqrtBk}
	\frac{N}{\sqrt{B_k}}
	& = \frac{2\pi}{\sqrt{6}} \frac{1}{\sqrt{1+A_k}} \frac{N}{n} 
	= \frac{\pi}{2\sqrt{6}} \frac{1}{\sqrt{1+A_k}} \frac{1}{1 + \frac{1}{N_0}} 
	\\
	& = \alpha_{k,4}
	\Big\{
		1
		+ \beta_{k,4} \frac{1}{N_0}
		+ \gamma_{k,4} \frac{1}{N_0^2}
		+ \mathcal{O}\Big( \frac{1}{N_0^3} \Big)
	\Big\}
	\nonumber
\end{align}
with
\[
	\alpha_{k,4} = \frac{\pi}{2\sqrt{6}} \alpha_{k,3},
	\quad
	\beta_{k,4} = \beta_{k,3} -1,
	\quad
	\gamma_{k,4} = 1 - \beta_{k,3} + \gamma_{k,3},
\]
so that
\[
	1\pm \frac{N}{\sqrt{B_k}}
	= (1\pm \alpha_{k,4})
	\Big\{
		1
		\pm \frac{\alpha_{k,4}}{1\pm \alpha_{k,4}}
		\Big( \beta_{k,4} \frac{1}{N_0} + \gamma_{k,4} \frac{1}{N_0^2} + \mathcal{O}\Big( \frac{1}{N_0^3} \Big) \Big)
	\Big\}.
\]
Therefore
\begin{align} \label{eq:prelog}
	\log \Big( 1 \pm \frac{N}{\sqrt{B_k}} \Big)
	& = \log (1\pm \alpha_{k,4})
	\pm \frac{\alpha_{k,4}\beta_{k,4}}{1 \pm \alpha_{k,4}} \frac{1}{N_0}
	\\
	& + \Big(
		\pm \frac{\alpha_{k,4}\gamma_{k,4}}{1 \pm \alpha_{k,4}}
		- \frac{1}{2} \Big( \frac{\alpha_{k,4}\beta_{k,4}}{1 \pm \alpha_{k,4}} \Big)^2
	\Big)
	\frac{1}{N_0^2} 
	+ \mathcal{O}\Big( \frac{1}{N_0^3} \Big).
	\nonumber
\end{align}
Accordingly, \eqref{eq:prelog} implies
\begin{align} \label{eq:log}
	\log \Big( \frac{1+\frac{N}{\sqrt{B_k}}}{1-\frac{N}{\sqrt{B_k}}} \Big)
	= \log(\alpha_{k,5})
	+ \beta_{k,5} \frac{1}{N_0} + \gamma_{k,5} \frac{1}{N_0^2} + \mathcal{O}\Big( \frac{1}{N_0^3} \Big)
\end{align}
with
\[
	\alpha_{k,5} = \frac{1+\alpha_{k,4}}{1-\alpha_{k,4}},
	\quad
	\beta_{k,5} = \frac{2\alpha_{k,4}\beta_{k,4}}{1-\alpha_{k,4}^2},
	\quad
	\gamma_{k,5}
	= \frac{2\alpha_{k,4}\gamma_{k,4}}{1-\alpha_{k,4}^2} + \frac{2\alpha_{k,4}^3\beta_{k,4}^2}{(1-\alpha_{k,4}^2)^2}.
\]
Next, using \eqref{eq:AkBkCk}, we have
\[
	C_k 
	= \frac{3n^2}{2\pi^2} (A_k-1)
	= \frac{3n^2}{2\pi^2} \frac{A_k^2-1}{1+A_k}
	= \frac{2}{1+A_k} k^2 \Big( 1 - \frac{\pi^2k^2}{3n^2} \Big)
\]
so that, by \eqref{eq:ak} and \eqref{eq:1overN0},
\begin{equation} \label{eq:SqrtCoNpre}
	\frac{\sqrt{C_k}}{N}
	= \frac{\sqrt{2}}{\sqrt{1+A_k}} \frac{k}{N} \sqrt{1 - \frac{\pi^2 k^2}{3n^2}}
	= \frac{\sqrt{2}}{\sqrt{1+A_k}} \frac{k}{N} \sqrt{1 - \frac{a_k^2}{(1+ \frac{1}{N_0})^2}}.
\end{equation}
In \eqref{eq:SqrtCoNpre}, using \eqref{eq:1pAsqrtres} along with the Taylor approximation
\[
	\sqrt{1 - \frac{a^2}{(1+x)^2}}
	= \sqrt{1-a^2}
		\Big\{ 1 + \frac{a^2}{1-a^2} \, x + \frac{a^2(2a^2-3)}{2(1-a^2)^2} \, x^2 + \mathcal{O}(x^3)
	\Big\}
\]
(which is valid on $(-\frac{1}{2},\infty)$ when $0 < a < \frac{1}{2}$),
we deduce
\begin{equation} \label{eq:SqrtCoN}
	\frac{\sqrt{C_k}}{N}
	= \alpha_{k,6}
	\Big\{
		1
		+ \beta_{k,6} \frac{1}{N_0}
		+ \gamma_{k,6} \frac{1}{N_0^2}
		+ \mathcal{O}\Big( \frac{1}{N_0^3} \Big)
	\Big\}
\end{equation}
with
\[
	\alpha_{k,6} = \frac{\sqrt{2}\sqrt{1-a_k^2}}{\sqrt{1+\mathcal{A}_k}} \frac{k}{N},
	\
	\beta_{k,6} = \beta_{k,3} + \frac{a_k^2}{1-a_k^2},
	\
	\gamma_{k,6}
	= \gamma_{k,3} + \frac{\beta_{k,3}a_k^2}{1-a_k^2}
	+\frac{a_k^2(2a_k^2-3)}{2(1-a_k^2)^2}.
\]
In light of \eqref{eq:SqrtCoN}, using the Taylor approximation ($\alpha \ne 0$, $x>-1$)
\begin{align*}
	\frac{\arctan (\alpha(1+x))}{\alpha(1+x)}
	& = \frac{\arctan \alpha}{\alpha} \left( 1 - x + x^2 \right)
	+ \frac{1}{1+\alpha^2} x - \frac{1+2\alpha^2}{(1+\alpha^2)^2} x^2 + \mathcal{O}(x^3),
\end{align*}
we therefore get
\begin{equation} \label{eq:arctan}
	\frac{\arctan\big(\frac{\sqrt{C_k}}{N}\big)}{\frac{\sqrt{C_k}}{N}}
	= \alpha_{k,7}
	\Big\{
		1
		+ \beta_{k,7} \frac{1}{N_0}
		+ \gamma_{k,7}\frac{1}{N_0^2}
	\Big\}
	+ \beta_{k,7}' \frac{1}{N_0}
	+ \gamma_{k,7}' \frac{1}{N_0^2}
	+ \mathcal{O} \Big( \frac{1}{N_0^3} \Big)
\end{equation}
with
\[
	\alpha_{k,7} = \frac{\arctan \alpha_{k,6}}{\alpha_{k,6}},
	\quad
	\beta_{k,7} = - \beta_{k,6},
	\quad
	\gamma_{k,7} = \beta_{k,6}^2-\gamma_{k,6},
\]
and
\[
	\beta_{k,7}' = \frac{\beta_{k,6}}{1+\alpha_{k,6}^2},
	\quad
	\gamma_{k,7}' = \frac{\gamma_{k,6}}{1+\alpha_{k,6}^2} - \frac{\beta_{k,6}^2(1+2\alpha_{k,6}^2)}{(1+\alpha_{k,6}^2)^2}.
\]
Concerning the summand in $R_{n,1}$, we combine \eqref{eq:1overAk}, \eqref{eq:NoversqrtBk} and \eqref{eq:log} to get
\begin{equation} \label{eq:Hn1pre}
	\frac{1}{A_k} \frac{N}{\sqrt{B_k}} 
	\log \left(\frac{1+\frac{N}{\sqrt{B_k}}}{1-\frac{N}{\sqrt{B_k}}} \right)
	= \alpha_{k,8}
	+ \beta_{k,8} \frac{1}{N_0} 
	+ \gamma_{k,8} \frac{1}{N_0^2} 
	+ \mathcal{O} \Big( \frac{1}{N_0^3} \Big)
\end{equation}
with
\[
	\alpha_{k,8} = \alpha_{k,1} \alpha_{k,4} \log(\alpha_{k,5}),
	\quad
	\beta_{k,8} = \alpha_{k,1} \alpha_{k,4} ( (\beta_{k,1}+\beta_{k,4}) \log(\alpha_{k,5}) + \beta_{k,5}),
\]
\[
	\gamma_{k,8}
	= \alpha_{k,1} \alpha_{k,4}
	\big((\beta_{k,1}\beta_{k,4}+\gamma_{k,1}+\gamma_{k,4}) \log(\alpha_{k,5}) + (\beta_{k,1}+\beta_{k,4}) \beta_{k,5} + \gamma_{k,5}\big).
\]
For the summand in $R_{n,2}$, we use \eqref{eq:1overAk} and \eqref{eq:arctan} to obtain
\begin{equation} \label{eq:Hn2pre}
	\frac{1}{A_k} \frac{\arctan\big(\frac{\sqrt{C_k}}{N}\big)}{\frac{\sqrt{C_k}}{N}}
	= \alpha_{k,9}
	+ \beta_{k,9}\frac{1}{N_0} 
	+ \gamma_{k,9} \frac{1}{N_0^2} 
	+ \mathcal{O} \Big( \frac{1}{N_0^3} \Big)
\end{equation}
with 
\[
	\alpha_{k,9} = \alpha_{k,1} \alpha_{k,7},
	\quad
	\beta_{k,9} = \alpha_{k,1} ( \alpha_{k,7} (\beta_{k,1} + \beta_{k,7}) + \beta_{k,7}'),
\]
\[
	\gamma_{k,9}
	= \alpha_{k,1} ( (\beta_{k,1} \beta_{k,7} + \gamma_{k,1} + \gamma_{k,7}) \alpha_{k,7} + \beta_{k,1}\beta_{k,7}' + \gamma_{k,7}').
\] 
As for the term $\dfrac{1}{A_k\sqrt{C_k}}$ appearing in $R_{n,4}$ and $R_{n,5}$, we use \eqref{eq:SqrtCoN} to deduce
\begin{equation} \label{eq:1osqrtC}
	\frac{1}{\sqrt{C_k}}
	= \frac{1}{k} \, \alpha_{k,10}
	\Big\{ 1 + \beta_{k,10} \frac{1}{N_0} + \gamma_{k,10} \frac{1}{N_0^2} + \mathcal{O} \Big( \frac{1}{N_0^3} \Big) \Big\}
\end{equation}
with
\[
	\alpha_{k,10} = \frac{\sqrt{1+\mathcal{A}_k}}{\sqrt{2}\sqrt{1-a_k^2}},
	\quad
	\beta_{k,10} = - \beta_{k,6},
	\quad
	\gamma_{k,10} = \beta_{k,6}^2-\gamma_{k,6},
\]
and we combine \eqref{eq:1overAk} with \eqref{eq:1osqrtC} to obtain
\begin{align} \label{eq:Hndecomppart}
	\frac{1}{A_k\sqrt{C_k}}
	& = \frac{1}{k} \, \alpha_{k,11}
	\Big\{ 1 + \beta_{k,11} \frac{1}{N_0} + \gamma_{k,11} \frac{1}{N_0^2} + \mathcal{O} \Big( \frac{1}{N_0^3} \Big) \Big\}
\end{align}
with
\[
	\alpha_{k,11} = \alpha_{k,1} \alpha_{k,10},
	\quad
	\beta_{k,11} = \beta_{k,1} + \beta_{k,10},
	\quad
	\gamma_{k,11} = \beta_{k,1} \beta_{k,10} + \gamma_{k,1} + \gamma_{k,10}.
\]

\subsection{Asymptotic behavior of $R_{n,1}$ and $R_{n,2}$}
Concerning the asymptotic behavior of $R_{n,1}$ and $R_{n,2}$, here we prove \eqref{eq:Rn12asymp}. In view of \eqref{eq:Hndecomp},
we see that \eqref{eq:Hn1pre} and \eqref{eq:Hn2pre} imply for $\ell = 1,2$
\begin{align} 
	R_{n,\ell}
	& = \sum_{k=1}^{N} \frac{1}{N} \mu_{\ell,1}(\frac{k}{N})
	+ \frac{1}{N_0} \sum_{k=1}^{N} \frac{1}{N} \mu_{\ell,2}(\frac{k}{N})
	+ \frac{1}{N_0^2} \sum_{k=1}^{N} \frac{1}{N} \mu_{\ell,3}(\frac{k}{N})
	+ \mathcal{O} \Big( \frac{1}{N_0^3} \Big)
	\nonumber
	\\
	& = R_{n,\ell,1} + \frac{1}{N_0} R_{n,\ell,2} + \frac{1}{N_0^2} R_{n,\ell,3} + \mathcal{O} \Big( \frac{1}{N_0^3} \Big),
	\quad
	\text{say},
	\label{eq:Hnell}
\end{align}
where the functions $\mu_{1,1}(x)$, $\mu_{1,2}(x)$, $\mu_{1,3}(x)$ are obtained by replacing $\displaystyle \frac{k}{N}$ with $x$ in $\alpha_{k,8}$,
$\beta_{k,8}$, and $\gamma_{k,8}$ respectively, and $\mu_{2,1}(x)$, $\mu_{2,2}(x)$, $\mu_{2,3}(x)$ are similarly
obtained from $\alpha_{k,9}$, $\beta_{k,9}$, and $\gamma_{k,9}$ respectively. The functions $\mu_{\ell,j}(x)$ ($1 \le \ell \le 2$ and $1 \le j \le 3$)
so obtained are smooth in the interval $[0,1]$, and therefore Theorem C applies
to each of the sums $R_{n,\ell,j}$ ($1 \le \ell \le 2$ and $1 \le j \le 3$) to yield
\begin{equation} \label{eq:Hnjlasymp}
	R_{n,\ell,j} = a_{\ell,j} + b_{\ell,j} \frac{1}{N} + c_{\ell,j} \frac{1}{N^2} + \mathcal{O} \Big( \frac{1}{N^3} \Big)
\end{equation}
for some constants $a_{\ell,j}$, $b_{\ell,j}$, and $c_{\ell,j}$. Since $\displaystyle \frac{1}{N_0} = \frac{n_0}{4N}$, and
\begin{equation} \label{eq:1over N}
	\frac{1}{N}
	= \frac{4}{n-n_0}
	= \frac{4}{n} \frac{1}{1-\frac{n_0}{n}}
	= \frac{4}{n} \Big( 1 + \frac{n_0}{n} + \Big( \frac{n_0}{n} \Big)^2 \Big) 
	+ \mathcal{O} \Big( \frac{1}{n^4} \Big),
\end{equation}
use of \eqref{eq:Hnjlasymp} in \eqref{eq:Hnell} implies \eqref{eq:Rn12asymp} for $\ell = 1,2$.

\subsection{Asymptotic behavior of $R_{n,3}$}

Here we study $R_{n,3}$ and prove the asymptotic relation \eqref{eq:Rn3asymp}. To this end,
we employ \eqref{eq:parfrac} for the parameters $x=k$, $a = \dfrac{\pi^2}{3n^2}$, $b = N^2- \dfrac{\pi^2}{3n^2}N^4$ and
then apply \eqref{eq:sum1} to deduce
\begin{align*}
	R_{n,3}
	& = \frac{\psi(N+1+\sqrt{B_N}) - \psi(1+\sqrt{B_N}) - \psi(N+1-\sqrt{B_N}) +  \psi(1-\sqrt{B_N})}{2A_N\sqrt{B_N}}
	\\
	& + i 
	\frac{\psi(N+1+i\sqrt{C_N}) - \psi(1+i\sqrt{C_N}) - \psi(N+1-i\sqrt{C_N}) + \psi(1-i\sqrt{C_N})}{2A_N\sqrt{C_N}}.
\end{align*}
We then use \eqref{eq:relations} and the $\pi$-periodicity of the cotangent function to obtain
\begin{align*}
	R_{n,3}
	& = \frac{1}{2A_N\sqrt{B_N}} 
	\Big(
		\psi(\sqrt{B_N}+N) - \psi(\sqrt{B_N}-N) + \frac{1}{\sqrt{B_N}+N} - \frac{1}{\sqrt{B_N}}
	\Big)
	\\
	& + \frac{i}{2A_N\sqrt{C_N}} 
	\Big(
		\psi(N+i\sqrt{C_N}) - \psi(N-i\sqrt{C_N})
	\\	
	& 	\hspace{5cm} - \frac{2i\sqrt{C_N}}{C_N+N^2} - \frac{1}{i\sqrt{C_N}} + \pi \cot \pi i \sqrt{C_N}
	\Big).
\end{align*}
Use of \eqref{eq:ABCestimate} for $k = N$ shows that the terms, aside from those involving the digamma and the cotangent functions,
are of size $\displaystyle \mathcal{O}(\frac{1}{N^2})$. For the cotangent term, employing \eqref{eq:ABCestimate} for $k = N$ we obtain
\[
	\pi \cot \pi i \sqrt{C_N}
	= -i \pi - i \pi \frac{2 e^{-2\pi\sqrt{C_N}}}{1-e^{-2\pi\sqrt{C_N}}}
	= - i \pi + \mathcal{O} (e^{-\pi N}).
\]
Accordingly,
\begin{align*}
	R_{n,3}
	& = \frac{\psi(\sqrt{B_N}+N) - \psi(\sqrt{B_N}-N)}{2A_N\sqrt{B_N}} 
	\\
	& \hspace{1cm} + i \frac{\psi(N+i\sqrt{C_N}) - \psi(N-i\sqrt{C_N}) - i\pi }{2A_N\sqrt{C_N}}
	+ \mathcal{O} \Big( \frac{1}{N^2} \Big).
\end{align*}
Then we use the asymptotic expansion \eqref{eq:asymp} (in the form $\psi(z) = \log z + \mathcal{O}(\frac{1}{z})$)
along with \eqref{eq:ABCestimate} for $k = N$ to get
\begin{align*}
	R_{n,3}
	& = \frac{1}{2A_N\sqrt{B_N}} \log(\frac{\sqrt{B_N}+N}{\sqrt{B_N}-N}) 
	+ \frac{1}{2A_N\sqrt{C_N}} \Big( \pi -2 \arctan\big(\frac{\sqrt{C_N}}{N}\big) \Big)	
	+ \mathcal{O} \Big( \frac{1}{N^2} \Big)
	\\
	& = \frac{1}{2N} \frac{1}{A_N} \frac{N}{\sqrt{B_N}} \log \Big( \frac{1+\frac{N}{\sqrt{B_N}}}{1-\frac{N}{\sqrt{B_N}}} \Big)
	- \frac{1}{N} \frac{1}{A_N} \frac{\arctan\big(\frac{\sqrt{C_N}}{N}\big)}{\frac{\sqrt{C_N}}{N}} 
	+ \dfrac{\pi}{2} \frac{1}{A_N \sqrt{C_N}} 
	+ \mathcal{O} \Big( \frac{1}{N^2} \Big)
	\\ 
	& = R_{n,3,1} - R_{n,3,2} + R_{n,3,3} 
	+ \mathcal{O}\Big( \frac{1}{N^2} \Big),
	\quad \text{say.}
	\nonumber
\end{align*}
By use of \eqref{eq:Hn1pre}, \eqref{eq:Hn2pre}, and \eqref{eq:Hndecomppart}, we have
\begin{equation*}
	R_{n,3,1} = \frac{1}{2N} \alpha_{N,8} + \mathcal{O} \Big( \frac{1}{N^2} \Big),
	\
	R_{n,3,2} = \frac{1}{N} \alpha_{N,9} + \mathcal{O} \Big( \frac{1}{N^2} \Big),
	\
	R_{n,3,3} = \frac{\pi}{2N} \alpha_{N,11} + \mathcal{O} \Big( \frac{1}{N^2} \Big)
\end{equation*}
so that
\begin{equation} \label{eq:Rn3simple}
	R_{n,3} = \frac{\alpha_{N,8} - 2\alpha_{N,9} + \pi \alpha_{N,11}}{2} \, \frac{1}{N}+ \mathcal{O} \Big( \frac{1}{N^2} \Big).
\end{equation}
Using \eqref{eq:1over N} in \eqref{eq:Rn3simple}, we conclude
\begin{align} \label{eq:Rn3asympcomp}
	R_{n,3} = 2(\alpha_{N,8} - 2\alpha_{N,9} + \pi \alpha_{N,11}) \frac{1}{n} + \mathcal{O} \Big( \frac{1}{n^2} \Big).
\end{align}
Noting that $\alpha_{N,j}$, $\beta_{N,j}$, and $\gamma_{N,j}$ are in fact constants, 
\begin{align*}
	\alpha_{N,8}
	& = \frac{\pi}{2 \sqrt{6}} \frac{1}{\mathcal{A}_N \sqrt{1 + \mathcal{A}_N}}
	\log \Big( \frac{2 \sqrt{6} \sqrt{1 + \mathcal{A}_N} + \pi}{2 \sqrt{6} \sqrt{1 + \mathcal{A}_N} - \pi } \Big)
	\\
	\alpha_{N,9}
	& = \frac{2 \sqrt{6}}{\sqrt{48-\pi^2}} \frac{\sqrt{1 + \mathcal{A}_N}}{\mathcal{A}_N}
	\arctan \Big( \frac{\sqrt{48-\pi^2}}{2 \sqrt{6} \sqrt{1 + \mathcal{A}_N}} \Big)
	\\
	\alpha_{N,11}
	& = \frac{2 \sqrt{6}}{\sqrt{48-\pi^2}} \frac{\sqrt{1 + \mathcal{A}_N}}{\mathcal{A}_N}
\end{align*}
where
\[
	\mathcal{A}_N = \sqrt{1+ \frac{\pi^2}{12} (1 - \frac{\pi^2}{48})},
\]
\eqref{eq:Rn3asympcomp} delivers \eqref{eq:Rn3asymp} with $\beta_3 = 2(\alpha_{N,8} - 2\alpha_{N,9} + \pi \alpha_{N,11})$.

\subsection{Asymptotic behavior of $R_{n,4}$}
Here we consider $R_{n,4}$ in \eqref{eq:Rn34} and prove the asymptotic relation \eqref{eq:Rn4asymp}. First we observe that
\[
	\alpha_{k,11}
	= \frac{\sqrt{1+\mathcal{A}_k}}{\sqrt{2}\sqrt{1-a_k^2}\mathcal{A}_k}
\]
is bounded for $1 \le k \le N$ so that the use of \eqref{eq:ak} in \eqref{eq:Hndecomppart} entails
\begin{align}
	R_{n,4}
	& = \sum_{k=1}^{N} \frac{1}{k} \alpha_{k,11}
	+ \frac{1}{N_0} \sum_{k=1}^{N} \frac{1}{k} \alpha_{k,11} \beta_{k,11}
	+ \frac{1}{N_0^2} \sum_{k=1}^{N} \frac{1}{k} \alpha_{k,11} \gamma_{k,11}
	+ \mathcal{O} \Big( \frac{\log N}{N_0^3} \Big)
	\nonumber
	\\
	& = \sum_{k=1}^{N} \frac{1}{k} \alpha_{k,11}
	+ \frac{\pi}{4\sqrt{3}}\frac{1}{N_0} \sum_{k=1}^{N} \frac{1}{N} \alpha_{k,11} \frac{\beta_{k,11}}{a_k}
	+ \frac{\pi}{4\sqrt{3}} \frac{1}{N_0^2} \sum_{k=1}^{N} \frac{1}{N} \alpha_{k,11} \frac{\gamma_{k,11}}{a_k}
	\nonumber
	\\
	& + \mathcal{O} \Big( \frac{\log N}{N_0^3} \Big)
	\nonumber
	\\
	& = R_{n,4,1}
	+ \frac{\pi}{4\sqrt{3}} \frac{1}{N_0} R_{n,4,2} + \frac{\pi}{4\sqrt{3}} \frac{1}{N_0^2} R_{n,4,3}
	+ \mathcal{O} \Big( \frac{\log N}{N_0^3} \Big),
	\quad
	\text{say.}
	\label{eq:Rn4}
\end{align}
For $R_{n,4,1}$, we note that
\begin{equation} \label{eq:alphak11}
	\alpha_{k,11}
	= \mu\big( \frac{k}{N} \big)
\end{equation}
where
\[
	\mu(x)
	= \frac{\sqrt{1+\sqrt{1+\frac{\pi^2}{12}x^2\Big( 1- \frac{\pi^2}{48}x^2\Big)}}}
	{\sqrt{2}\sqrt{1-\frac{\pi^2}{48}x^2}\sqrt{1+\frac{\pi^2}{12}x^2\Big( 1- \frac{\pi^2}{48}x^2\Big)}}
\]
The function $\mu$ admits a convergent Taylor series expansion valid in an interval containing $[-1,1]$  which is
\begin{equation} \label{eq:alphak11Taylor}
	\mu(x)
	= 1
	- \frac{\pi^2}{48} x^2
	+ \frac{11\pi^4}{4608} x^4
	- \frac{41\pi^6}{221184} x^6
	+ \mathcal{O}(x^8).
\end{equation}
Therefore, the function
\[
	\mu_1(x) = \frac{\mu(x)-1}{x}
\]
is smooth on the interval $[0,1]$. This motivates us to write
\begin{align*}
	R_{n,4,1}
	& = \sum_{k=1}^{N} \frac{1}{k} \alpha_{k,11}
	= \sum_{k=1}^{N} \frac{1}{k} \mu \big( \frac{k}{N}\big)
	= \sum_{k=1}^{N} \frac{1}{k} + \sum_{k=1}^{N} \frac{1}{N}\frac{\mu \big( \frac{k}{N}\big)-1}{\frac{k}{N}}
	\\
	& = \sum_{k=1}^{N} \frac{1}{k} + \sum_{k=1}^{N} \frac{1}{N} \mu_1 \big( \frac{k}{N}\big).
\end{align*}
By the well-known formula (see, for example, \cite[Formulas 6.3.2 and 6.3.18]{AS})
\[
	\sum_{k=1}^N \frac{1}{k}
	= \log N + \gamma + \frac{1}{2N} - \frac{1}{12 N^2}  + \mathcal{O} \Big( \frac{1}{N^4} \Big),
\]
and the application of  Theorem C to the second sum giving
\[
	\sum_{k=1}^{N} \frac{1}{N} \mu_1 \big( \frac{k}{N}\big)
	 = d_1 + d_2 \frac{1}{N} + d_3 \frac{1}{N^2} + \mathcal{O} \Big( \frac{1}{N^3} \Big)
\]
for some constants $d_j$, we have
\begin{equation} \label{eq:Rn41}
	R_{n,4,1}
	= \log N + a_{4,1} + b_{4,1} \frac{1}{N} + c_{4,1} \frac{1}{N^2} + \mathcal{O} \Big( \frac{1}{N^3} \Big)
\end{equation}
for some constants $a_{4,1}$, $b_{4,1}$, and $c_{4,1}$. As for $R_{n,4,2}$ and $R_{n,4,3}$,
we note that $\beta_{k,11}$ and $\gamma_{k,11}$ contain $a_k$ (in fact $a_k^2$) as a factor:
\begin{equation} \label{eq:betak11overaksq}
	\frac{\beta_{k,11}}{a_k^2}
	= \frac{2(1-2a_k^2)(2+\mathcal{A}_k)}{\mathcal{A}_k^2(1+\mathcal{A}_k)} - \frac{1}{1-a_k^2},
\end{equation}
and
\begin{align}
	\frac{\gamma_{k,11}}{a_k^2}
	& = \frac{2(8a_k^6+4a_k^4+10a_k^2-3)}{\mathcal{A}_k^4}
	- \frac{8a_k^6+4a_k^4+10a_k^2-3}{\mathcal{A}_k^3(1+\mathcal{A}_k)}
	- \frac{2a_k^2(2a_k^2-1)^2}{\mathcal{A}_k^2(1+\mathcal{A}_k)^2}
	\nonumber
	\\
	&
	- \frac{2a_k^2(2a_k^2-1)}{\mathcal{A}_k(1+\mathcal{A}_k)(1-a_k^2)}
	+ \frac{4a_k^2(2a_k^2-1)}{\mathcal{A}_k^2(1-a_k^2)}
	+ \frac{3}{2(1-a_k^2)^2}.
	\label{eq:gammak11overaksq}
\end{align}
The importance of this observation is that both $\displaystyle \frac{\beta_{k,11}}{a_k}$ and
$\displaystyle \frac{\gamma_{k,11}}{a_k}$ are of the form $\displaystyle \mu\big( \frac{k}{N} \big)$ for smooth functions $\mu$
on $[0,1]$. Since $\alpha_{k,11}$ also has the same form, we conclude for $j=2,3$ that
\[
	R_{n,4,j}
	= \sum_{k=1}^{N} \frac{1}{N} \mu_j \big( \frac{k}{N} \big)
\]
for appropriately defined smooth functions $\mu_j$ on the interval $[0,1]$. Therefore Theorem C applies
to both of these sums to yield for $j=2,3$
\begin{equation} \label{eq:Rn4j}
	R_{n,4,j}
	= a_{4,j} + b_{4,j} \frac{1}{N} + c_{4,j} \frac{1}{N^2} + \mathcal{O} \Big( \frac{1}{N^3} \Big)
\end{equation}
for some constants $a_{4,j}$, $b_{4,j}$, and $c_{4,j}$. Using \eqref{eq:Rn41} and \eqref{eq:Rn4j} in \eqref{eq:Rn4} and then making use of 
\eqref{eq:1overN0} and \eqref{eq:1over N}, we deduce \eqref{eq:Rn4asymp}.

\subsection{Asymptotic behavior of $R_{n,5}$}

Here we prove \eqref{eq:Rn5asymp} for the asymptotic behavior of $R_{n,5}$. To this end, we first show the following.
\begin{Lem} \label{lemma:Rn5simp1}
As $n \to \infty$, we have
\begin{equation} \label{eq:Rn5simple2}
	R_{n,5}
	= \sum_{1 \le k \le \log N \atop 0 \le p \le \log N} \frac{1}{k} \, \alpha_{k,11} \, e^{-2\pi \sqrt{C_k}(p+1)}
	+ \mathcal{O} \Big( \frac{1}{N^2} \Big).
\end{equation}
\end{Lem}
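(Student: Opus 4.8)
The plan is to begin from the exact identity
\[
	R_{n,5}
	= \sum_{k=1}^{N} \frac{1}{A_k\sqrt{C_k}} \sum_{p=0}^{\infty} e^{-2\pi\sqrt{C_k}(p+1)},
\]
obtained from \eqref{eq:Rn5Qn} by expanding $\dfrac{e^{-2\pi\sqrt{C_k}}}{1-e^{-2\pi\sqrt{C_k}}}$ as a geometric series (legitimate since $C_k>0$), and then to carry out three successive truncations/replacements, each contributing an error of order $\mathcal{O}(1/N^2)$ or smaller. Throughout I would use the estimate $C_k>\tfrac{69}{100}k^2$ from \eqref{eq:ABCestimate}, which gives $\sqrt{C_k}\ge c_0 k$ with $c_0=\tfrac{\sqrt{69}}{10}$, together with $1<A_k<\tfrac{13}{10}$; note that $2\pi c_0>5$.

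First I would truncate the outer sum to $1\le k\le \log N$. For each $k$ the inner series equals $\dfrac{e^{-2\pi\sqrt{C_k}}}{1-e^{-2\pi\sqrt{C_k}}}\le \dfrac{e^{-2\pi c_0 k}}{1-e^{-2\pi c_0}}$, so the $k$-th summand is $\mathcal{O}(e^{-2\pi c_0 k}/k)$, whence $\sum_{k>\log N}(\cdots)=\mathcal{O}(e^{-2\pi c_0\log N})=\mathcal{O}(N^{-2\pi c_0})=\mathcal{O}(1/N^2)$. Second, for each retained index $k\le\log N$ I would truncate the inner sum to $0\le p\le \log N$; since $\sqrt{C_k}\ge c_0$ and $\tfrac{1}{A_k\sqrt{C_k}}$ is bounded, the discarded tail is $\le \tfrac{1}{A_k\sqrt{C_k}}\,\dfrac{e^{-2\pi\sqrt{C_k}(\log N+1)}}{1-e^{-2\pi\sqrt{C_k}}}=\mathcal{O}(N^{-2\pi c_0})$ uniformly in $k\ge1$, and summing over $k\le\log N$ gives $\mathcal{O}(N^{-2\pi c_0}\log N)=\mathcal{O}(1/N^2)$.

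Third, in the resulting finite double sum I would replace $\tfrac{1}{A_k\sqrt{C_k}}$ by $\tfrac{1}{k}\alpha_{k,11}$ via the expansion \eqref{eq:Hndecomppart}, whose remainder is $\tfrac{1}{k}\alpha_{k,11}\big(\beta_{k,11}\tfrac{1}{N_0}+\gamma_{k,11}\tfrac{1}{N_0^2}+\mathcal{O}(\tfrac{1}{N_0^3})\big)$. The decisive point is that $\beta_{k,11}$ and $\gamma_{k,11}$ carry $a_k^2$ as a factor, which is exactly what \eqref{eq:betak11overaksq} and \eqref{eq:gammak11overaksq} record; hence, with $a_k=\tfrac{\pi}{4\sqrt3}\tfrac{k}{N}$ (see \eqref{eq:ak}) and $\tfrac{1}{N_0}=\mathcal{O}(\tfrac1N)$ (see \eqref{eq:1overN0}), this remainder is $\mathcal{O}(k/N^3)$ for $1\le k\le N$. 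Multiplying by $e^{-2\pi\sqrt{C_k}(p+1)}$, summing the resulting geometric series over $0\le p\le\log N$, and then using $\sum_{k\ge1}k\,e^{-2\pi c_0 k}<\infty$ produces a total replacement error of $\mathcal{O}(1/N^3)$. Collecting the three error contributions yields \eqref{eq:Rn5simple2}.

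The step I expect to be the main obstacle is the third one: one must be sure that the correction terms in \eqref{eq:Hndecomppart} do not merely remain bounded but in fact vanish to second order in $a_k$, since otherwise the replacement would cost $\mathcal{O}(1/N)$ rather than $\mathcal{O}(1/N^2)$ and the claimed error term would be unreachable. Once that algebraic cancellation (already made explicit in \S\S4.5) is in hand, the remaining work — the two exponential tail bounds — is routine, being entirely governed by the uniform lower bound $\sqrt{C_k}\ge c_0 k$ of \eqref{eq:ABCestimate}.
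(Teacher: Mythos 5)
Your proposal is correct and follows essentially the same route as the paper: the same three ingredients appear — the replacement of $\frac{1}{A_k\sqrt{C_k}}$ by $\frac{1}{k}\alpha_{k,11}$ via \eqref{eq:Hndecomppart} with the $a_k$-divisibility of $\beta_{k,11}$ and $\gamma_{k,11}$, and the two exponential tail truncations governed by the lower bound $\sqrt{C_k}\ge\frac{\sqrt{69}}{10}\,k$ from \eqref{eq:ABCestimate} — merely in a different order, the paper performing the replacement first over the full range $1\le k\le N$ and truncating afterwards. Your extraction of $a_k^2$ (rather than a single factor $a_k$, as the paper does) makes the replacement error $\mathcal{O}(1/N^3)$ instead of $\mathcal{O}(1/(NN_0))=\mathcal{O}(1/N^2)$, which is marginally sharper but not needed.
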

\begin{proof}
We use \eqref{eq:ak} in \eqref{eq:Hndecomppart} to write
\begin{multline}
	R_{n,5}
	= \sum_{k=1}^N \frac{1}{A_k\sqrt{C_k}} \, \frac{e^{-2\pi \sqrt{C_k}}}{1-e^{-2\pi \sqrt{C_k}}}
	= \sum_{k=1}^N \frac{1}{k} \, \alpha_{k,11} \, \frac{e^{-2\pi \sqrt{C_k}}}{1-e^{-2\pi \sqrt{C_k}}}
	\label{eq:Hndecomppartr}
	\\
	+ \frac{\pi}{4 \sqrt{3}} \sum_{k=1}^N \alpha_{k,11}
	\Big\{
		\frac{1}{NN_0} \frac{\beta_{k,11}}{a_k} 
		+ \frac{1}{NN_0^2} \frac{\gamma_{k,11}}{a_k}
		+ \frac{1}{k} \, \mathcal{O} \Big( \frac{1}{N_0^3} \Big)
	\Big\}	 \frac{e^{-2\pi \sqrt{C_k}}}{1-e^{-2\pi \sqrt{C_k}}}.
\end{multline}
From \eqref{eq:alphak11}, \eqref{eq:betak11overaksq}, and \eqref{eq:gammak11overaksq}, 
we see that $\alpha_{k,11}$, $\dfrac{\beta_{k,11}}{a_k}$, and $\dfrac{\gamma_{k,11}}{a_k}$ are bounded
for $1 \le k \le N$. Moreover, since $\dfrac{k}{2} < \dfrac{\sqrt{69}}{10} k < \sqrt{C_k}$,
\[
	0 < \sum_{k=1}^N \frac{1}{k} \frac{e^{-2\pi \sqrt{C_k}}}{1-e^{-2\pi \sqrt{C_k}}}
	< \sum_{k=1}^N \frac{e^{-2\pi \sqrt{C_k}}}{1-e^{-2\pi \sqrt{C_k}}}
	< \sum_{k=1}^{\infty} \frac{e^{-\pi k}}{1-e^{-\pi k}}
	< \infty.
\]
Therefore \eqref{eq:Hndecomppartr} implies
\[
	R_{n,5}
	= \sum_{k=1}^N \frac{1}{k} \, \alpha_{k,11} \, \frac{e^{-2\pi \sqrt{C_k}}}{1-e^{-2\pi \sqrt{C_k}}}
	+ \mathcal{O} \Big( \frac{1}{NN_0} \Big) + \mathcal{O} \Big( \frac{1}{NN_0^2} \Big) + \mathcal{O} \Big( \frac{1}{N_0^3} \Big)
\]
which, in turn, gives
\begin{equation} \label{eq:Rn5simple}
	R_{n,5}
	= \sum_{k=1}^N \frac{1}{k} \, \alpha_{k,11} \, \frac{e^{-2\pi \sqrt{C_k}}}{1-e^{-2\pi \sqrt{C_k}}}
	+ \mathcal{O} \Big( \frac{1}{N^2} \Big).
\end{equation}
Since $\dfrac{k}{2}< \sqrt{C_k}$, we have
\begin{align}
	0 <
	\sum_{k > \log N} \frac{1}{k} \, \frac{e^{-2\pi \sqrt{C_k}}}{1-e^{-2\pi \sqrt{C_k}}}
	& < \sum_{k > \log N} \frac{1}{k} \frac{e^{-\pi k}}{1-e^{-\pi k}} 
	< \sum_{k > \log N} \frac{e^{-\pi k}}{1-e^{-\pi}}
	\label{eq:simpleest1}
	\\
	& \le \frac{e^{-\pi \log N}}{(1-e^{-\pi})^2}  
	= \frac{N^{-\pi}}{(1-e^{-\pi})^2}.
	\nonumber
\end{align}
Accordingly, since $\alpha_{k,11}$ is bounded, using \eqref{eq:simpleest1} in \eqref{eq:Rn5simple}, we obtain
\begin{equation} \label{eq:Rn5simple1}
	R_{n,5}
	= \sum_{1 \le k \le \log N} \frac{1}{k} \, \alpha_{k,11} \, \frac{e^{-2\pi \sqrt{C_k}}}{1-e^{-2\pi \sqrt{C_k}}}
	+ \mathcal{O} \Big( \frac{1}{N^2} \Big).
\end{equation}
Note further that
\begin{align*}
	\sum_{1 \le k \le \log N} \frac{1}{k} \, \alpha_{k,11} \, \frac{e^{-2\pi \sqrt{C_k}}}{1-e^{-2\pi \sqrt{C_k}}}
	& - \sum_{1 \le k \le \log N \atop 0 \le p \le \log N} \frac{1}{k} \, \alpha_{k,11} \, e^{-2\pi \sqrt{C_k}(p+1)}
	\\
	&= \sum_{1 \le k \le \log N \atop p > \log N} \frac{1}{k} \, \alpha_{k,11} \, e^{-2\pi \sqrt{C_k}(p+1)} 
\end{align*}
and, using $\dfrac{k}{2}< \sqrt{C_k}$,
\begin{align*}
	0 & < \sum_{1 \le k \le \log N \atop p > \log N} \frac{1}{k} \, e^{-2\pi \sqrt{C_k}(p+1)} 
	< \sum_{1 \le k \le \log N \atop p > \log N} e^{-\pi k(p+1)} 
	\le \sum_{1 \le k \le \log N} \frac{e^{-\pi k (1+ \log N)}}{1-e^{-\pi k}}
	\\
	& 
	< \sum_{k=1}^{\infty} \frac{e^{-\pi k (1+ \log N)}}{1-e^{-\pi}}
	= \frac{1}{1-e^{-\pi}} \frac{(eN)^{-\pi}}{1-(eN)^{-\pi}}
	\le \frac{e^{-\pi}}{(1-e^{-\pi})^2} \, N^{-\pi}.
\end{align*}
Therefore, since $\alpha_{k,11}$ is bounded, \eqref{eq:Rn5simple1} gives \eqref{eq:Rn5simple2}. This completes the proof.
\end{proof}

To further simply the relation given in Lemma \ref{lemma:Rn5simp1}, we estimate $\alpha_{k,11}$ and the term in the exponent in \eqref{eq:Rn5simple2}, 
and prove the following.

\begin{Lem}
As $n \to \infty$, we have
\begin{equation} \label{eq:Rnsimplelog}
	R_{n,5}
	= \sum_{1 \le k \le \log N \atop  0 \le p \le \log N} \frac{1}{k} \, e^{-2\pi k(p+1)}
	+ \mathcal{O} \Big( \frac{1}{N^2} \Big).
\end{equation}
\end{Lem}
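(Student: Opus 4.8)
The plan is to start from Lemma~\ref{lemma:Rn5simp1} and replace, inside the double sum there, $\alpha_{k,11}$ by $1$ and $\sqrt{C_k}$ by $k$, checking that each substitution introduces only an $\mathcal{O}(1/N^2)$ error. For $1 \le k \le \log N$ and $0 \le p \le \log N$ I would factor
\[
	\frac{1}{k}\,\alpha_{k,11}\,e^{-2\pi\sqrt{C_k}(p+1)} - \frac{1}{k}\,e^{-2\pi k(p+1)}
	= \frac{1}{k}\,e^{-2\pi k(p+1)}\Big( \alpha_{k,11}\,e^{2\pi(k-\sqrt{C_k})(p+1)} - 1 \Big)
\]
and then estimate the parenthesis uniformly over these ranges.

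Two inputs are needed. First, from \eqref{eq:alphak11} and the Taylor expansion \eqref{eq:alphak11Taylor}, one has $\alpha_{k,11} = 1 + \mathcal{O}(k^2/N^2)$ uniformly for $1 \le k \le N$, and $\alpha_{k,11}$ is bounded there. Second --- and this is the delicate point --- I would establish a clean bound for $k - \sqrt{C_k}$. From the identity $C_k = \frac{2}{1+A_k}k^2\big(1-\frac{\pi^2 k^2}{3n^2}\big)$ recorded in \S\S4.2, together with $A_k-1 = \frac{A_k^2-1}{A_k+1} \le \frac12(A_k^2-1) \le \frac{2\pi^2 k^2}{3n^2}$ (using \eqref{eq:AkBkCk} and $A_k>1$), a short computation gives $0 \le k^2 - C_k = \mathcal{O}(k^4/n^2)$ and hence
\[
	0 \le k - \sqrt{C_k} = \frac{k^2 - C_k}{k + \sqrt{C_k}} = \mathcal{O}\Big( \frac{k^3}{n^2} \Big).
\]
Because $k$ and $p$ are truncated at $\log N$ --- which is exactly what Lemma~\ref{lemma:Rn5simp1} supplies --- this forces $0 \le (k-\sqrt{C_k})(p+1) = \mathcal{O}((\log N)^4/n^2) \to 0$, so that $e^{2\pi(k-\sqrt{C_k})(p+1)} = 1 + \mathcal{O}\big(k^3(p+1)/n^2\big)$ uniformly on the summation ranges.

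Combining the two inputs, the parenthesis above is $\mathcal{O}(k^2/N^2) + \mathcal{O}(k^3(p+1)/n^2)$, so the difference between the two double sums is bounded by a constant multiple of
\begin{multline*}
	\frac{1}{N^2}\sum_{k \ge 1}\sum_{p \ge 0} k\,e^{-2\pi k(p+1)}
	\\
	+ \frac{1}{n^2}\sum_{k \ge 1}\sum_{p \ge 0} k^2(p+1)\,e^{-2\pi k(p+1)}.
\end{multline*}
Both double series converge: summing the inner $p$-series leaves $\sum_{k \ge 1} k\,\frac{e^{-2\pi k}}{1-e^{-2\pi k}}$ and $\sum_{k \ge 1} k^2\,\frac{e^{-2\pi k}}{(1-e^{-2\pi k})^2}$ respectively, which are finite because the geometric factor dominates the polynomial weights. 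Since $N \le n/4$ gives $1/n^2 = \mathcal{O}(1/N^2)$, the whole difference is $\mathcal{O}(1/N^2)$; adding it to the $\mathcal{O}(1/N^2)$ term already present in Lemma~\ref{lemma:Rn5simp1} yields \eqref{eq:Rnsimplelog}.

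The main obstacle is exactly the replacement of $\sqrt{C_k}$ by $k$ inside the exponential: one must make sure the factor $e^{2\pi(k-\sqrt{C_k})(p+1)}$ stays under control, which works only because $k - \sqrt{C_k} = \mathcal{O}(k^3/n^2)$ and both summation indices are already cut off at $\log N$, so the product $(k-\sqrt{C_k})(p+1)$ is $o(1)$; everything after that is a routine convergent-series estimate.
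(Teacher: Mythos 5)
Your argument is correct and follows essentially the same route as the paper: replace $\alpha_{k,11}$ by $1$ and $\sqrt{C_k}$ by $k$ inside the truncated double sum of Lemma~\ref{lemma:Rn5simp1}, use the $\log N$ cutoffs to keep the exponential correction factor of the form $1+o(1)$, and absorb the resulting errors via the convergent double series $\sum_{k,p} k^{\alpha}(p+1)^{\beta}e^{-2\pi k(p+1)}$. The only (harmless) difference is that you bound $k-\sqrt{C_k}=\mathcal{O}(k^3/n^2)$ directly from the identity $C_k=\frac{2}{1+A_k}k^2\big(1-\frac{\pi^2k^2}{3n^2}\big)$, whereas the paper extracts the same estimate from the Taylor expansion \eqref{eq:SqrtCoN} and the boundedness of $\beta_{k,6}/a_k^2$ and $\gamma_{k,6}/a_k^2$.
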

\begin{proof}
From \eqref{eq:alphak11} and \eqref{eq:alphak11Taylor}, we have for $1 \le k \le \log N$
\begin{equation} \label{eq:alphak11TaylorkN}
	\alpha_{k,11} = 1 - \frac{\pi^2}{48} \frac{k^2}{N^2} + \mathcal{O} \Big( \frac{\log^4 N}{N^4} \Big).
\end{equation}
On the other hand, from \eqref{eq:SqrtCoN}, we have
\begin{equation} \label{eq:SqrtCoNr}
	-2\pi \sqrt{C_k}
	= -2\pi \frac{\sqrt{2}\sqrt{1-a_k^2}}{\sqrt{1+\mathcal{A}_k}} \, k
	\Big\{
		1
		+ \beta_{k,6} \frac{1}{N_0}
		+ \gamma_{k,6} \frac{1}{N_0^2}
		+ \mathcal{O}\Big( \frac{1}{N_0^3} \Big)
	\Big\}.
\end{equation}
Replacing $\dfrac{k}{N}$ by $x$ in $\dfrac{\sqrt{2}\sqrt{1-a_k^2}}{\sqrt{1+\mathcal{A}_k}}$, we obtain the function
\[
	\mu(x)
	= \frac{\sqrt{2}\sqrt{1-\frac{\pi^2}{48}x^2}}
	{\sqrt{1+\sqrt{1+\frac{\pi^2}{12}x^2\Big( 1- \frac{\pi^2}{48}x^2\Big)}}}
\]
for which Taylor's theorem with remainder delivers
\[
	\mu(x) = 1 - \frac{\pi^2}{48} x^2 + \mathcal{O}(x^4),
	\qquad
	\text{for}
	\quad
	x \in [-\frac{4 \sqrt{3}}{\pi}, \frac{4 \sqrt{3}}{\pi}]. 
\]
Therefore, for $1 \le k \le \log N$, we have
\[
	-2 \pi \frac{\sqrt{2}\sqrt{1-a_k^2}}{\sqrt{1+\mathcal{A}_k}}
	= \Big\{ -2\pi + \frac{\pi^3}{24} \frac{k^2}{N^2} + \mathcal{O} \Big( \frac{\log^4N}{N^4} \Big)\Big\}.
\]
Using this in \eqref{eq:SqrtCoNr} and recalling \eqref{eq:ak}, we obtain for $1 \le k \le \log N$
\begin{multline} \label{eq:Ckproduct}
	-2\pi \sqrt{C_k} \, (p+1)
	= \Big\{ -2\pi + \frac{\pi^3}{24} \frac{k^2}{N^2} + \mathcal{O} \Big( \frac{\log^4N}{N^4} \Big)\Big\}
	\\
	\times
	\Big\{
		1
		+ \frac{\pi^2}{48} \frac{\beta_{k,6}}{a_k^2} \frac{k^2}{N^2N_0}
		+ \frac{\pi^2}{48} \frac{\gamma_{k,6}}{a_k^2} \frac{k^2}{N^2N_0^2}
		+ \mathcal{O}\Big( \frac{1}{N_0^3} \Big)
	\Big\}
	k(p+1).
\end{multline}
Note that
\[
	\frac{\beta_{k,6}}{a_k^2}
	= \frac{2 (1-2a_k^2)}{\mathcal{A}_k (1+\mathcal{A}_k)} + \frac{1}{1-a_k^2}
\]
\begin{align*}
	\frac{\gamma_{k,6}}{a_k^2}
	& = \frac{1}{\mathcal{A}_k^2(1+\mathcal{A}_k)}
	\Big( \frac{6a_k^2(2a_k^2-1)^2}{1+\mathcal{A}_k} - \frac{(2a_k^2-3)(12a_k^4-1)}{\mathcal{A}_k} \Big)
	\\
	& \hspace{3cm}+ \frac{2a_k^2}{\mathcal{A}_k (1+\mathcal{A}_k)} \frac{1-2a_k^2}{1-a_k^2}
	+ \frac{2a_k^2-3}{2 (1-a_k^2)^2}
\end{align*}
are bounded for $1 \le k \le N$ and therefore,
for $1 \le k \le \log N$ and $0 \le p \le \log N$, we have 
\begin{equation} \label{eq:betak6powers}
	\beta_{k,6} \, k^j \, (p+1)
	= \frac{\pi^2}{48} \, \frac{\beta_{k,6}}{a_k^2} \, \frac{k^{j+2} \, (p+1)}{N^2}
	= \mathcal{O} \Big( \frac{\log^{j+3} N}{N^2} \Big),
	\quad
	j \in \mathbb{N},
\end{equation}
and this also holds when $\beta_{k,6}$ is replaced with $\gamma_{k,6}$. Note also that
\begin{equation} \label{eq:logj}
	\exp \Big( \frac{\log^{j} N}{N^{\alpha} N_0^{\beta}} \Big)
	= 1 + \mathcal{O} \Big( \frac{\log^{j} N}{N^{\alpha+\beta}} \Big), 
	\qquad
	\text{for }
	j,\alpha,\beta \ge 0, \ 
	\alpha + \beta > 0,
\end{equation}
where we have used $e^x = 1+x + \mathcal{O}(x^2) = 1 + \mathcal{O}(x)$ for $x=o(1)$.
Therefore, in light of \eqref{eq:Ckproduct}, use of \eqref{eq:betak6powers} and \eqref{eq:logj}
implies for $1 \le k \le \log N$ and $0 \le p \le \log N$
\begin{align} 
	e^{-2\pi \sqrt{C_k}(p+1)}
	&= e^{-2\pi k(p+1)} e^{\frac{\pi^3}{24} \frac{k^3}{N^2}(p+1)} \Big( 1 + \mathcal{O} \Big( \frac{\log^4 N}{N^3} \Big) \Big)
	\nonumber
	\\
	& = e^{-2\pi k(p+1)} \Big(1 + \frac{\pi^3}{24} \frac{k^3}{N^2}(p+1) + \mathcal{O} \big( \frac{\log^8 N}{N^4} \big) \Big)
	\Big( 1 + \mathcal{O} \Big( \frac{\log^4 N}{N^3} \Big) \Big)
	\nonumber
	\\
	& = e^{-2\pi k(p+1)} \Big\{ 1+ \frac{\pi^3}{24} \frac{k^3}{N^2}(p+1) + \mathcal{O} \Big( \frac{\log^4 N}{N^3} \Big) \Big\}.
	\label{eq:Cksimplest}
\end{align}
Finally, using \eqref{eq:alphak11TaylorkN} and \eqref{eq:Cksimplest}, we obtain for $1 \le k \le \log N$ and $0 \le p \le \log N$
\begin{align} 
	\frac{1}{k} \, \alpha_{k,11} & \, e^{-2\pi \sqrt{C_k}(p+1)}
	\label{eq:Rn5coef}
	\\
	& = \frac{1}{k} \, e^{-2\pi k(p+1)}
	\Big\{ 1 - \frac{\pi^2}{48} \frac{k^2}{N^2} + \frac{\pi^3}{24} \frac{k^3}{N^2}(p+1) + \mathcal{O} \Big( \frac{\log^4 N}{N^3} \Big) \Big\}.
	\nonumber
\end{align}
Moreover, since the double series
\[
	\sum_{k \ge 1 \atop p \ge 0} e^{-2\pi k(p+1)} \, k^{\alpha} \, (1+p)^{\beta}
\]
converges for any $\alpha,\beta \in \mathbb{R}$, use of \eqref{eq:Rn5coef} entails
\begin{align*}
	\sum_{1 \le k \le \log N \atop  0 \le p \le \log N} \frac{1}{k} \, \alpha_{k,11} \, e^{-2\pi \sqrt{C_k}(p+1)}
	& = \sum_{1 \le k \le \log N \atop  0 \le p \le \log N} \frac{1}{k} \, e^{-2\pi k(p+1)}
	+ \mathcal{O} \Big( \frac{1}{N^2} \Big),
\end{align*}
and therefore \eqref{eq:Rn5simple2} yields \eqref{eq:Rnsimplelog} and this completes the proof.
\end{proof}

Finally, we establish the following relation.

\begin{Lem}
As $n \to \infty$, there holds
\begin{equation} \label{eq:Rnsimplest1}
	R_{n,5}
	= \sum_{k=1}^{\infty} \frac{1}{k} \, \frac{e^{-2\pi k}}{1-e^{-2\pi k}}
	+ \mathcal{O} \Big( \frac{1}{n^2} \Big).
\end{equation}
\end{Lem}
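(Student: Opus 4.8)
The plan is to start from the relation \eqref{eq:Rnsimplelog} established in the previous lemma,
\[
	R_{n,5}
	= \sum_{1 \le k \le \log N \atop  0 \le p \le \log N} \frac{1}{k} \, e^{-2\pi k(p+1)}
	+ \mathcal{O} \Big( \frac{1}{N^2} \Big),
\]
and to complete the truncated double sum to the full double series over $k \ge 1$ and $p \ge 0$, absorbing the two extra tails into the error term. Writing the difference as
\[
	\sum_{k \ge 1 \atop p \ge 0} \frac{1}{k} e^{-2\pi k(p+1)}
	- \sum_{1 \le k \le \log N \atop 0 \le p \le \log N} \frac{1}{k} e^{-2\pi k(p+1)}
	= \sum_{k > \log N \atop p \ge 0} \frac{1}{k} e^{-2\pi k(p+1)}
	+ \sum_{1 \le k \le \log N \atop p > \log N} \frac{1}{k} e^{-2\pi k(p+1)},
\]
I would bound each tail by summing the geometric series in $p$ and using $\frac1k \le 1$ together with $k > \log N$ (resp. $p > \log N$). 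For the first tail this gives
\[
	0 < \sum_{k > \log N \atop p \ge 0} \frac{1}{k} e^{-2\pi k(p+1)}
	= \sum_{k > \log N} \frac{1}{k} \frac{e^{-2\pi k}}{1-e^{-2\pi k}}
	\le \frac{1}{1-e^{-2\pi}} \sum_{k > \log N} e^{-2\pi k}
	\le \frac{N^{-2\pi}}{(1-e^{-2\pi})^2},
\]
and for the second tail
\[
	0 < \sum_{1 \le k \le \log N \atop p > \log N} \frac{1}{k} e^{-2\pi k(p+1)}
	\le \sum_{1 \le k \le \log N} \frac{e^{-2\pi k(\log N+1)}}{1-e^{-2\pi k}}
	\le \frac{1}{1-e^{-2\pi}} \sum_{k \ge 1} e^{-2\pi k(\log N+1)}
	\le \frac{N^{-2\pi}}{(1-e^{-2\pi})^2}.
\]
Both are therefore $\mathcal{O}(N^{-2\pi}) = \mathcal{O}(1/N^2)$, so \eqref{eq:Rnsimplelog} becomes
\[
	R_{n,5} = \sum_{k \ge 1 \atop p \ge 0} \frac{1}{k} e^{-2\pi k(p+1)} + \mathcal{O}\Big( \frac{1}{N^2}\Big).
\]

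To finish, I would evaluate the full double series by summing the (absolutely convergent) geometric series in $p$:
\[
	\sum_{k \ge 1 \atop p \ge 0} \frac{1}{k} e^{-2\pi k(p+1)}
	= \sum_{k=1}^{\infty} \frac{1}{k} \sum_{p=0}^{\infty} e^{-2\pi k(p+1)}
	= \sum_{k=1}^{\infty} \frac{1}{k} \, \frac{e^{-2\pi k}}{1-e^{-2\pi k}}.
\]
Finally, since $N = \frac{n-n_0}{4}$ by \eqref{eq:N} and $n_0 \in \{0,1,2,3\}$, we have $1/N^2 = 16/(n-n_0)^2 = \mathcal{O}(1/n^2)$, which converts the error term and produces exactly \eqref{eq:Rnsimplest1}.

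I do not expect a genuine obstacle here: the whole argument is a routine tail estimate of the same elementary type already used in the proof of the preceding lemma, followed by a single geometric summation. The only point requiring a little care is making the two tail bounds uniform in $n$, which is handled by the crude majorizations above; in particular all the error contributions are in fact exponentially small, far smaller than the $\mathcal{O}(1/n^2)$ asserted.
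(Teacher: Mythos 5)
Your proof is correct and follows essentially the same route as the paper: the paper likewise extends the truncated double sum to the full series by bounding the $p$-tail for $1\le k\le\log N$ and then the $k$-tail, each by a constant times $N^{-2\pi}=\mathcal{O}(N^{-2})$, before summing the geometric series in $p$ and converting $\mathcal{O}(1/N^2)$ to $\mathcal{O}(1/n^2)$. The only (immaterial) difference is that you complete both tails in one step rather than sequentially.
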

\begin{proof}
Since
\[
	\sum_{1 \le k \le \log N} \frac{1}{k} \, \frac{e^{-2\pi k}}{1-e^{-2\pi k}}
	- \sum_{1 \le k \le \log N \atop 0 \le p \le \log N} \frac{1}{k} \, e^{-2\pi k(p+1)}
	= \sum_{1 \le k \le \log N \atop p > \log N} \frac{1}{k} \, e^{-2\pi k(p+1)} 
\]
and
\begin{align*}
	0 < \sum_{1 \le k \le \log N \atop p > \log N} \frac{1}{k} \, e^{-2\pi k(p+1)} 
	& < \sum_{1 \le k \le \log N \atop p > \log N} e^{-2\pi k(p+1)} 
	\le \sum_{1 \le k \le \log N} \frac{e^{-2\pi k(1+\log N)}}{1-e^{-2\pi k}}
	\\
	& < \sum_{k=1}^{\infty} \frac{e^{-2\pi k(1+\log N)}}{1-e^{-2\pi}}
	\le \frac{e^{-2\pi}}{(1-e^{-2\pi})^2} \,N^{-2\pi},
\end{align*}
\eqref{eq:Rnsimplelog} implies
\begin{equation} \label{eq:Rnsimplelog1}
	R_{n,5}
	= \sum_{1 \le k \le \log N} \frac{1}{k} \, \frac{e^{-2\pi k}}{1-e^{-2\pi k}}
	+ \mathcal{O} \Big( \frac{1}{N^2} \Big).
\end{equation}
Further, since
\[
	0 < \sum_{ k > \log N} \frac{1}{k} \, \frac{e^{-2\pi k}}{1-e^{-2\pi k}}
	< \sum_{ k > \log N} \frac{e^{-2\pi k}}{1-e^{-2\pi}}
	\le \frac{N^{-2\pi}}{(1-e^{-2\pi})^2}
\]
\eqref{eq:Rnsimplelog1} entails
\begin{equation} \label{eq:Rnsimplest}
	R_{n,5}
	= \sum_{k=1}^{\infty} \frac{1}{k} \, \frac{e^{-2\pi k}}{1-e^{-2\pi k}}
	+ \mathcal{O} \Big( \frac{1}{N^2} \Big).
\end{equation}
Therefore \eqref{eq:Rnsimplest1} follows from \eqref{eq:Rnsimplest} with the aid of \eqref{eq:1over N}. 
\end{proof}

As for the proof of \eqref{eq:Rn5asymp}, we note for $q = e^{-2\pi}$
\begin{align}
	\sum_{k=1}^{\infty} \frac{1}{k} \, \frac{e^{-2\pi k}}{1-e^{-2\pi k}}
	& = \sum_{k=1}^{\infty} \sum_{m=0}^{\infty} \frac{1}{k} \, (q^{m+1})^k
	= - \sum_{m=0}^{\infty} \sum_{k=1}^{\infty} \frac{-1}{k} \, (q^{m+1})^k
	\label{eq:last}
	\\
	& = - \sum_{m=0}^{\infty} \log(1-q^{m+1})
	= - \log( \prod_{m=0}^{\infty} (1-q^{m+1}))
	\nonumber
	\\
	& = - \log( (q;q)_{\infty})
	= - \log( q^{-\frac{1}{24}} \eta(i))
	\nonumber
\end{align}
where $(q;q)_{\infty}$ is the $q$-Pochhammer symbol and $\eta$ is the Dedekind eta-function \cite{Z}. Since 
$\displaystyle \eta(i) = \frac{\Gamma(\frac{1}{4})}{2\pi^\frac{3}{4}}$, use of \eqref{eq:last} in \eqref{eq:Rnsimplest1} proves \eqref{eq:Rn5asymp}.

\subsection{Asymptotic behavior of $Q_n$}
Here we study $Q_n$ in \eqref{eq:Rn5Qn} and establish \eqref{eq:Qnasymp}. To this end, we use partial fractions to write
\begin{equation} \label{eq:Qnparfrac}
	Q_{n}
	= \dfrac{\pi}{2\sqrt{3}n} \sum_{k=1}^{N} \Big( \frac{1}{k + \frac{\sqrt{3}n}{\pi}} - \frac{1}{k - \frac{\sqrt{3}n}{\pi}} \Big)
	+ \sum_{k=1}^{N} \frac{1}{k^2}
	= \dfrac{\pi}{2\sqrt{3}n} \, Q_{n,1} + \sum_{k=1}^{N} \frac{1}{k^2},
\end{equation}
say. For $Q_{n,1}$, we utilize \eqref{eq:sum1} to obtain
\begin{align*}
	Q_{n,1}
	= \psi(N+1+\frac{\sqrt{3}n}{\pi}) - \psi(1+\frac{\sqrt{3}n}{\pi}) - \psi(N+1-\frac{\sqrt{3}n}{\pi}) + \psi(1-\frac{\sqrt{3}n}{\pi}).
\end{align*}
Next we employ \eqref{eq:formula} to write
\[
	\psi(N+1+\frac{\sqrt{3}n}{\pi}) = \psi(\frac{\sqrt{3}n}{\pi}+N) + \frac{1}{\frac{\sqrt{3}n}{\pi}+N},
\]
\[	
	\psi(N+1-\frac{\sqrt{3}n}{\pi}) = \psi(\frac{\sqrt{3}n}{\pi}-N) + \pi \cot \pi (\frac{\sqrt{3}n}{\pi}-N),
\]
\[
	\psi(1+\frac{\sqrt{3}n}{\pi}) - \psi(1-\frac{\sqrt{3}n}{\pi}) = \frac{1}{\frac{\sqrt{3}n}{\pi}} - \pi \cot \pi \frac{\sqrt{3}n}{\pi},
\]
and use the $\pi$-periodicity of the cotangent function to have
\[
	Q_{n,1}
	= \psi(\frac{\sqrt{3}n}{\pi}+N) - \psi(\frac{\sqrt{3}n}{\pi}-N) + \mathcal{O} \Big( \frac{1}{n} \Big).
\]
Then we apply \eqref{eq:asymp} (in the form $\psi(z) = \log z + \mathcal{O}(\frac{1}{z})$) to deduce
\begin{equation} \label{eq:Qnlog}
	Q_{n,1}
	= \log\Big(\frac{\frac{\sqrt{3}n}{\pi}+N}{\frac{\sqrt{3}n}{\pi}-N}\Big)
	+ \mathcal{O} \Big( \frac{1}{n} \Big).
\end{equation}
Since
\[
	\frac{\frac{\sqrt{3}n}{\pi}+N}{\frac{\sqrt{3}n}{\pi}-N} 
	= \frac{\frac{\sqrt{3}n}{\pi}+\frac{n-n_0}{4}}{\frac{\sqrt{3}n}{\pi}-\frac{n-n_0}{4}}
	= \frac{4\sqrt{3}+\pi}{4\sqrt{3}-\pi} \frac{1- \frac{\pi n_0}{4\sqrt{3}+\pi}\frac{1}{n}}{1+ \frac{\pi n_0}{4\sqrt{3}-\pi}\frac{1}{n}},
\]
\eqref{eq:Qnlog} entails
\begin{equation} \label{eq:Qnlogsimple}
	Q_{n,1}
	= \log\Big(\frac{4\sqrt{3}+\pi}{4\sqrt{3}-\pi}\Big)
	+ \mathcal{O} \Big( \frac{1}{n} \Big).
\end{equation}
Finally, from \cite[Formulas 6.4.3 \& 6.4.12]{AS}, we have
\begin{equation} \label{eq:ksq}
	\sum_{k=1}^{N} \frac{1}{k^2}
	= \frac{\pi^2}{6} - \frac{1}{N} + \mathcal{O} \Big( \frac{1}{N^2}\Big)
	= \frac{\pi^2}{6} - \frac{4}{n} + \mathcal{O} \Big( \frac{1}{n^2}\Big).
\end{equation}
Accordingly, use of \eqref{eq:Qnlogsimple} and \eqref{eq:ksq} in \eqref{eq:Qnparfrac} proves \eqref{eq:Qnasymp}.

%

\section*{Acknowledgment} This research is supported by Bo\u{g}azi\c{c}i University Research Fund Grant Number 13387.

\end{document}